\newtheorem{thm}{Theorem}[section]
\newtheorem{prop}[thm]{Proposition}
\newtheorem{lem}[thm]{Lemma}
\newtheorem{cor}[thm]{Corollary}
\newtheorem{defn}[thm]{Definition}
\theoremstyle{remark}
\newtheorem{rem}[thm]{Remark}
\title{Flat structures on surface bundles}
\author{Jonathan Bowden}
\address{Mathematisches Institut, Ludwig-Maximilians-Universit\"at, Theresienstr. 39, 80333 M\"unchen, Germany}
\email{jonathan.bowden@mathematik.uni-muenchen.de}
\date{\today}
\subjclass[2000]{Primary 57R30, 37E30, 57R50; Secondary 57R17}
\begin{document}
\begin{abstract}
We show that there exist flat surface bundles with closed leaves having non-trivial normal bundles. This leads us to compute the Abelianisation of surface diffeomorphism groups with marked points. We also extend a formula of Tsuboi that expresses the Euler class of a flat circle bundle in terms of the Calabi invariant of certain Hamiltonian diffeomorphisms to surfaces of higher genus and derive a similar formula for the first MMM-class of surface bundles with punctured fibre.
\end{abstract}
\maketitle
\section{Introduction}
In this paper we study properties of the horizontal foliations of certain kinds of flat bundles. The paper divides into two main parts, the first of which is concerned with closed leaves of flat surface bundles and the second of which deals with fillings of flat circle bundles.

The existence of flat bundles with compact fibre that have closed leaves with non-trivial normal bundles is well known, with explicit examples given by certain flat structures of sphere bundles (cf.\ \cite{Mit}). This leads to the question of whether similar foliations exist on more general kinds of bundles. In view of this, we show the existence of foliated surface bundles with closed leaves that have prescribed self-intersection numbers, where the fibre can be taken to be any surface of sufficiently large genus. This is proved by a variation of the stabilisation trick of \cite{KM} that was originally used to show the existence of flat surface bundles with non-zero signatures. Indeed, given a surface bundle with a section $S$, whose self-intersection number is divisible by the Euler characteristic of the fibre, we show that $S$ can be made the leaf of a flat structure after stabilisation (Theorem \ref{leaf_stabilisation}).

To show the existence of flat surface bundles possessing closed leaves with non-trivial normal bundles, we are naturally led to certain calculations involving the homology of surface diffeomorphism groups with marked points. In particular, we show the following:
\begin{thm}
Let $\Sigma_h$ be a surface of genus $h \geq 3$ and let $k \geq 2$. Then 
\[H_1(Diff_{\delta}^+(\Sigma_{h,k})) = \mathbb{R}^+ \times \mathbb{Z}_2.\]
\end{thm}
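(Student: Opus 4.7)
The strategy is to define an explicit surjective homomorphism $\Phi : Diff^+_\delta(\Sigma_{h,k})\to \mathbb{R}^+\times \mathbb{Z}_2$ and then identify it with the abelianisation map through a sequence of five-term exact sequences in group homology. Writing the marked points as an unordered set $\{p_1,\dots,p_k\}$, each $f\in Diff^+(\Sigma_{h,k})$ induces a permutation $\sigma_f\in S_k$, and the chain rule combined with this reindexing shows that
\[
\Phi(f) \;=\; \Bigl(\prod_{i=1}^k \det(D_{p_i}f),\ \operatorname{sgn}(\sigma_f)\Bigr)
\]
is a well-defined group homomorphism. Surjectivity is immediate from local scalings near a single marked point (realising $\mathbb{R}^+$) and a diffeomorphism exchanging two marked points along an embedded arc (realising $\mathbb{Z}_2$).

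To identify $\Phi$ with the abelianisation map I would peel off the two short exact sequences
\[
1\to \mathrm{PDiff}^+(\Sigma_{h,k})\to Diff^+(\Sigma_{h,k})\to S_k\to 1,\qquad 1\to Diff_0(\Sigma_{h,k})\to \mathrm{PDiff}^+(\Sigma_{h,k})\to \mathrm{PMod}(\Sigma_{h,k})\to 1,
\]
where $\mathrm{PDiff}^+$ denotes the subgroup fixing every marked point. For $h\geq 3$, Powell's theorem combined with an induction on $k$ via the Birman exact sequence gives $H_1(\mathrm{PMod}(\Sigma_{h,k}))=0$: at each inductive step the Dehn-twist action on $H_1$ of the successive punctured surface kills the new coinvariant contribution that would otherwise appear.

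The main content of the proof is the identification
\[
H_1\bigl(Diff_0(\Sigma_{h,k})^\delta\bigr)\ \cong\ \mathbb{R}^k,
\]
generated by the $k$ individual Jacobian homomorphisms $f\mapsto\log\det D_{p_i}f$. That these realise $\mathbb{R}^k$ as a quotient of $H_1$ is clear; the substance is the reverse inclusion. I would use a Mather--Thurston-style fragmentation: given $f\in Diff_0(\Sigma_{h,k})$ with trivial Jacobian at every $p_i$, first absorb, near each marked point, an explicit model element of $SL(T_{p_i}\Sigma_h)$ that is itself a commutator (any shear in $SL(2,\mathbb{R})$ works), thereby reducing $f$ modulo commutators to a diffeomorphism that is the identity on a small disk around each $p_i$. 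Such a diffeomorphism lies in the compactly supported diffeomorphism group of the open surface $\Sigma_h\setminus\{p_1,\dots,p_k\}$, which is perfect by Thurston's theorem.

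Assembling the pieces: the second five-term sequence yields $H_1(\mathrm{PDiff}^+_\delta(\Sigma_{h,k}))=(\mathbb{R}^k)_{\mathrm{PMod}}=\mathbb{R}^k$, since $\mathrm{PMod}$ fixes every marked point and so acts trivially on the individual Jacobians, while the connecting map from $H_2(\mathrm{PMod})$ vanishes because each coordinate of $\Phi$ extends as an honest homomorphism to $\mathrm{PDiff}^+$. The first sequence then produces an extension of $\mathbb{Z}_2=H_1(S_k)$ by the $S_k$-coinvariants $(\mathbb{R}^k)_{S_k}=\mathbb{R}^+$, and this extension splits by divisibility of $\mathbb{R}^+$, yielding $\mathbb{R}^+\times\mathbb{Z}_2$ as required. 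I expect the main obstacle to be the fragmentation step, namely the reduction from ``trivial Jacobian at the $p_i$'' to ``compactly supported on the punctured surface'' modulo commutators; once this reduction is in place, the remaining steps are formal consequences of known perfectness and homology results.
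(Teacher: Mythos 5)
Your outer scaffolding (the extension $1 \to PDiff^+(\Sigma_{h,k}) \to Diff^+(\Sigma_{h,k}) \to S_k \to 1$, the observation that $H_2(S_k)$ is torsion while the coinvariants $(\mathbb{R}^+)^k_{S_k}=\mathbb{R}^+$ are torsion-free, and the splitting by divisibility) is exactly the paper's final step. The genuine gap is in your central claim $H_1(Diff_{0,\delta}(\Sigma_{h,k}))\cong\mathbb{R}^k$, specifically in the ``absorption'' step. Multiplying $f$ by cut-off linear commutators can at best normalise the \emph{1-jet} $D_{p_i}f$; it does not ``reduce $f$ modulo commutators to a diffeomorphism that is the identity on a small disk around each $p_i$.'' After the linear part is killed, $f$ still has an arbitrary germ tangent to the identity at each marked point, and killing that germ in homology is the actual heart of the theorem. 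In the paper this is done by passing to the germ-group extension $1 \to Diff^c(\Sigma_{h,1}) \to Diff^+(\Sigma_{h,1}) \to \mathcal{G}_p \to 1$ and invoking Sternberg's linearisation theorem: for a germ $\phi$ with identity linear part, $A_\lambda\phi$ (with $A_\lambda=\lambda\,Id$) is conjugate to $A_\lambda$, whence $\phi=[\psi^{-1},A_\lambda^{-1}]$ and the coinvariants $H_1(\mathcal{G}_{p,Id})_{GL^+(2,\mathbb{R})}$ vanish. Your sketch contains no substitute for this linearisation input, so the reverse inclusion you call ``the substance'' remains unproved.

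There is a second, independent problem in the same step: once you have a diffeomorphism that is the identity near the $p_i$, it lies in $Diff^c(\Sigma_h\setminus\{p_1,\dots,p_k\})$, but Thurston's theorem gives perfectness only of the \emph{identity component} of that group, and your element need not lie in it --- $\pi_0$ of the compactly supported group is the mapping class group $\Gamma_h^k$ (for $\mathbb{R}^2\setminus\{0\}$ it is $\mathbb{Z}$). This is precisely the oversight the paper identifies in Fukui's argument in the discussion preceding Theorem \ref{Fukui}. To repair it you need perfectness of the full group $Diff^c$, which for $h\geq 3$ is obtained from Harer stability $H_1(\Gamma_h^k)=H_1(\Gamma_h)$, Powell's theorem and Thurston via the five-term sequence, as in the proof of Proposition \ref{horizontal_leaves}; and since those commutators need not lie in $Diff_0(\Sigma_{h,k})$, they compute at best the $PMod$-coinvariants of $H_1(Diff_{0,\delta})$ rather than $H_1(Diff_{0,\delta})$ itself (the coinvariants do suffice for your five-term argument, but then the statement should be phrased that way). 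Note that the paper avoids your identity-component decomposition altogether: it obtains $H_1(PDiff^+_\delta(\Sigma_{h,k}))=(\mathbb{R}^+)^k$ directly from the germ-group extension plus Sternberg, and only then splits off $S_k$.
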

\noindent We also show that the Abelianisation of the group of compactly supported diffeomorphism of $\mathbb{R}^2$ that fix the origin is $ \mathbb{R}^+$. This is a special case of a result of \cite{Fuk}, although the proof given there seems to overlook a small, but important, technical point (see discussion preceding Theorem \ref{Fukui}). Moreover, our proof, which relies on Sternberg Linearisation, is briefer than and independent of Fukui's original argument.

The stabilisation trick can also be applied to obtain foliations that have transverse symplectic structures. As in the case of flat bundles we show the existence of symplectically flat surface bundles with closed leaves having prescribed self-intersection numbers (Proposition \ref{symp_leaves_construction}). These results yield the existence of manifolds with certain kinds of symplectic pairs, as defined in \cite{KM}. Indeed, by applying the normal connected sum operation we deduce the existence of symplectic pairs both of whose foliations have closed leaves with non-trivial normal bundles (Corollary \ref{Pairs_no_triv_leaves}).

Given a flat circle bundle, one can consider the problem of extending the flat structure to the interior of a surface bundle. If the fibre is assumed to be a disc, then there is a dichotomy depending on whether one requires that the foliation is symplectic or not. For in the smooth case, it is relatively easy to show that any flat circle bundle over a surface admits a flat disc bundle filling after stabilisation (Proposition \ref{boundary_Euler_class}). However, in the symplectic case the Euler class provides an obstruction by a result of Tsuboi. In fact, Tsuboi gave the following formula for computing the Euler class of a flat circle bundle in terms of the Calabi homomorphism of certain extensions of the boundary holonomy to the interior of a disc:
\begin{thm}[\cite{Tsu}]\label{Tsuboi_int}
Let $\pi_1(\Sigma_g) \stackrel{\psi} \rightarrow Diff_0(S^1)$ be a homomorphism and let $a_i, b_i$ be standard generators of $\pi_1(\Sigma_g)$. Furthermore, let $f_i, h_i \in Symp(D^2)$ be extensions of $\psi(a_i), \psi(b_i)$ respectively and let $e(E)$ denote the Euler class of the total space of the $S^1$-bundle $E$. Then 
\[-\pi^2 e(E) = Cal([f_1, h_1]...[f_g, h_g]).\]
\end{thm}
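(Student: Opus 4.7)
The plan is to express both sides of the identity as outputs of natural homomorphisms defined on a central extension, and to compare them via the boundary restriction map $r: Symp(D^{2}) \to Diff_{+}(S^{1})$. The Euler number of the flat $S^{1}$-bundle is computed by the usual recipe for the central extension $\widetilde{Diff_{+}(S^{1})} \to Diff_{+}(S^{1})$ of orientation-preserving diffeomorphisms of $\mathbb{R}$ commuting with integer translations: lift the boundary holonomies $\psi(a_{i}), \psi(b_{i})$ arbitrarily; the commutator product of these lifts covers the identity of $S^{1}$ and is therefore an integer translation whose value equals $-e(E)$ up to a sign convention.

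The analogous object on the symplectic side is the universal cover $\widetilde{Symp}(D^{2})$, a central $\mathbb{Z}$-extension since $Symp(D^{2})$ deformation retracts onto $SO(2)$. The Calabi integral $\int_{0}^{1}\!\int_{D^{2}} H_{t}\,\omega\,dt$ depends only on the homotopy class of the generating Hamiltonian isotopy, and therefore defines a homomorphism $\widetilde{Cal}: \widetilde{Symp}(D^{2}) \to \mathbb{R}$ whose restriction to the canonical lift of $Symp_{c}(D^{2})$ recovers the usual Calabi invariant. The kernel of $r$ can be identified up to homotopy with the compactly supported symplectomorphism group, which is simply connected by Smale-type arguments, so $r$ induces an isomorphism $\pi_{1}(Symp(D^{2})) \cong \pi_{1}(Diff_{+}(S^{1})) \cong \mathbb{Z}$ and lifts to a map of central extensions that is the identity on the central $\mathbb{Z}$.

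Equipped with these tools, I would pick arbitrary lifts $\tilde f_{i}, \tilde h_{i} \in \widetilde{Symp}(D^{2})$ of $f_{i}, h_{i}$ and form $\tilde\phi := \prod_{i}[\tilde f_{i}, \tilde h_{i}]$. As a product of commutators in a group mapping to an abelian target, $\widetilde{Cal}(\tilde\phi) = 0$. On the other hand, $\tilde\phi$ projects to the compactly supported map $\phi := \prod_{i}[f_{i}, h_{i}]$, which has its own canonical lift $\hat\phi$ via a compactly supported Hamiltonian isotopy satisfying $\widetilde{Cal}(\hat\phi) = Cal(\phi)$; the two lifts of $\phi$ differ by a central integer $n$, whence $Cal(\phi) = -n \cdot \widetilde{Cal}(\tau)$, where $\tau$ generates the central $\mathbb{Z}$. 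Pushing the central element through the lifted restriction map identifies $n$ with the integer produced in the first paragraph, namely $e(E)$ up to the same sign convention.

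The main technical obstacle is pinning down the universal constant $\widetilde{Cal}(\tau) = \pi^{2}$, together with a careful verification that the lifted restriction sends the generator $\tau$ to the generator of the central $\mathbb{Z}$ in $\widetilde{Diff_{+}(S^{1})}$. The constant is obtained by a direct calculation on the rotation loop $(r,\theta) \mapsto (r, \theta + 2\pi t)$, whose generating Hamiltonian is proportional to $1 - r^{2}$; a secondary bookkeeping task is to match Tsuboi's normalisation of $Cal$ so that the final factor is exactly $\pi^{2}$ rather than a rational multiple of it.
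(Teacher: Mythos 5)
You should first note that the paper does not actually prove this statement: Theorem \ref{Tsuboi_int} is quoted from Tsuboi's paper \cite{Tsu}, and the article only reinterprets it (Theorem \ref{Tsuboi_five}) and generalises it (Proposition \ref{extended_Tsuboi}, Theorem \ref{Hamiltonian_extended_Tsuboi}), always taking the disc case as an input. So your proposal is to be judged against Tsuboi's argument rather than anything in this text, and in outline it is a legitimate and fairly standard route, close in spirit to Tsuboi's: compare the two lifts of $\phi=\prod[f_i,h_i]$ to $\widetilde{Symp}(D^2)$ (the commutator lift and the lift through the boundary-fixing subgroup), measure their central discrepancy by the Calabi homomorphism of the universal cover, and identify that discrepancy with the translation number $\mp e(E)$ via the lifted restriction to $\widetilde{Diff_0(S^1)}$. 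The topological inputs you invoke (contractibility of $Symp(D^2,\partial D^2)$ via Smale plus Moser, surjectivity and the fibration property of the restriction map, hence $\pi_1(Symp(D^2))\cong\pi_1(Diff_0(S^1))\cong\mathbb{Z}$ with the rotation loop as generator) are all true and standard, and the Milnor--Wood description of $e(E)$ as a translation number is correct and independent of the choice of lifts because the ambiguity is central.

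Two points carry real content and are only asserted. First, the statement that the normalised isotopy integral $\int_0^1\int_{D^2}H_t\,\omega\,dt$ descends to a homomorphism $\widetilde{Cal}$ on $\widetilde{Symp}(D^2)$ requires proving homotopy invariance rel endpoints (equivalently, vanishing on null-homotopic loops); multiplicativity is easy from the composition formula for Hamiltonians (the boundary normalisation is preserved because the isotopies preserve $\partial D^2$), but homotopy invariance is exactly the lemma on which the whole comparison rests, and you should either prove it directly or show that the value of a loop depends only on its boundary rotation data. Second, the constant is not "$\pi^2$ by a direct calculation" under the normalisation you wrote down: for the unit disc with $\omega$ of total area $\pi$ and $Cal$ defined by $\int_0^1\int H_t\,\omega\,dt$ with $H_t|_{\partial D^2}=0$, the full rotation loop gives $\pi^2/2$, and with the normalisation used in this paper, $Cal(\phi)=-\tfrac13\int\phi^*\lambda\wedge\lambda$ with $\omega=-d\lambda$, one has $\int\phi^*\lambda\wedge\lambda=-2\int_0^1\int H_t\,\omega\,dt$, giving yet another factor. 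So the "bookkeeping" you defer is genuinely where the stated coefficient $-\pi^2$ comes from (it reflects Tsuboi's normalisation of $Cal$ and of $e$), and you also need to verify $\widetilde{Cal}(\hat\phi)=Cal(\phi)$ for the canonical lift, i.e.\ the equality of the isotopy-integral and the $\phi^*\lambda\wedge\lambda$ definitions, with one consistent convention throughout. None of this breaks the argument, but as written the proof establishes $Cal(\prod[f_i,h_i])=c\cdot e(E)$ for some universal nonzero constant $c$ rather than the precise identity claimed.
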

\noindent Tsuboi's result can be reformulated in terms of the five-term exact sequence in group cohomology. The advantage of this reformulation is that it can easily be generalised to the case where the fibre of the filling is an arbitrary surface and the extensions of the boundary holonomies lie in the extended Hamiltonian group (Theorem \ref{Hamiltonian_extended_Tsuboi}).

As a consequence we see that the Euler class gives an obstruction to filling a circle bundle by a flat surface bundle with holonomy in the extended Hamiltonian group. We contrast this result with the fact that any flat circle bundle can be filled by a flat symplectic bundle after stabilisation (Theorem \ref{symp_flat_extension}). 

Another important topological quantity associated with a surface bundle is its signature. By considering appropriate extended Hamiltonian groups, we will derive a Tsuboi-type formula for the signature of bundles with fibre a once-punctured surface $\Sigma_h^1$.
\begin{thm}
Let $\Sigma^1_h \to E \to \Sigma_g$ be a bundle with holonomy representation $\pi_1(\Sigma_g) \stackrel{\rho} \rightarrow \Gamma_h^1$ and assume $h \geq 2$. Furthermore, let $\alpha_i = \rho(a_i)$ and $\beta_i = \rho(b_i)$ be the images of standard generators $a_i,b_i$ of $\pi_1(\Sigma_g)$. Then for any lifts $\phi_i, \psi_i \in \widetilde{Ham^c}(\Sigma_h^1)$ of $\alpha_i, \beta_i$ the signature satisfies
\[\sigma(E) = Cal([\phi_1, \psi_1]...[\phi_g, \psi_g ]).\]
\end{thm}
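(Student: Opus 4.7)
The plan is to model the argument on the cohomological reformulation of Tsuboi's theorem alluded to in the introduction, with the Euler class replaced by the signature class and the disc symplectomorphism group replaced by the extended Hamiltonian group $\widetilde{Ham^c}(\Sigma_h^1)$.  Concretely, one works with an extension
\[
1 \to Ham^c(\Sigma_h^1) \to \widetilde{Ham^c}(\Sigma_h^1) \to \Gamma_h^1 \to 1,
\]
chosen so that any lifts $\phi_i,\psi_i\in\widetilde{Ham^c}$ of $\alpha_i,\beta_i$ have the property that the product of commutators $P:=[\phi_1,\psi_1]\cdots[\phi_g,\psi_g]$ lies in $Ham^c(\Sigma_h^1)$; this is automatic, as the surface relation $\prod[a_i,b_i]=1$ in $\pi_1(\Sigma_g)$ forces $P$ into the kernel.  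Since every element of $\widetilde{Ham^c}$ acts by a symplectomorphism, conjugation preserves the time integral of a compactly supported Hamiltonian, so $Cal\colon Ham^c(\Sigma_h^1)\to\mathbb{R}$ is $\Gamma_h^1$-invariant; a short direct computation using this invariance shows that $Cal(P)$ is in fact independent of the choice of lifts.

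Next I would feed this into the five-term exact sequence in group cohomology with trivial real coefficients,
\[
0 \to H^1(\Gamma_h^1;\mathbb{R}) \to H^1(\widetilde{Ham^c};\mathbb{R}) \to H^1(Ham^c;\mathbb{R})^{\Gamma_h^1} \stackrel{\tau}{\to} H^2(\Gamma_h^1;\mathbb{R}),
\]
and examine the transgression $\tau(Cal)$.  At cocycle level $\tau(Cal)(\alpha,\beta) = Cal\bigl(s(\alpha)s(\beta)s(\alpha\beta)^{-1}\bigr)$ for any set-theoretic section $s\colon\Gamma_h^1\to\widetilde{Ham^c}$, and unfolding this formula on the standard polygon presentation $\pi_1(\Sigma_g)=\langle a_i,b_i \mid \prod[a_i,b_i]\rangle$ yields $\langle \rho^*\tau(Cal), [\Sigma_g]\rangle = Cal(P)$.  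The theorem therefore reduces to the identification
\[
\tau(Cal) \,=\, \sigma \in H^2(\Gamma_h^1;\mathbb{R}),
\]
where $\sigma$ denotes the signature cocycle for surface bundles with fibre $\Sigma_h^1$.

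The main obstacle is establishing this identification with the correct constant.  For $h\geq 2$ the group $H^2(\Gamma_h^1;\mathbb{R})$ is one-dimensional, spanned by the first MMM-class $e_1$, which by Meyer's signature theorem is a fixed rational multiple of $\sigma$, so $\tau(Cal)$ is in any case a scalar multiple of $\sigma$.  To show that the scalar is $1$ I would proceed in two steps: first verify non-vanishing of $\tau(Cal)$ by evaluating both sides on a test bundle of non-zero signature, such as one of the Kotschick--Morita flat examples appearing earlier in the paper; second pin down the normalisation by a direct comparison with Tsuboi's formula (Theorem \ref{Tsuboi_int}) on a carefully chosen bundle where the two formulae can be related, or equivalently by extracting the constant from Meyer's explicit signature cocycle.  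Getting this normalisation exactly right is where I expect the most delicate bookkeeping in the proof to lie.
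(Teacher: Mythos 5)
Your skeleton---transgress $Cal$ in the five-term sequence of $1\to Ham^c(\Sigma_h^1)\to\widetilde{Ham^c}(\Sigma_h^1)\to\Gamma_h^1\to1$ and evaluate on the surface-group $2$-cycle to get $Cal([\phi_1,\psi_1]\cdots[\phi_g,\psi_g])$---is indeed the paper's framework (Theorem \ref{MMM_Cal} combined with the cycle computation of Theorem \ref{Tsuboi_five}). But the step you defer, identifying $\tau(Cal)$ with the signature class ``with the correct constant'', is exactly where your plan has a genuine gap, and the paper does not close it by a test-bundle normalisation. First, the scalar is not universal: it depends on which crossed homomorphism $\widetilde{Flux_a}=\widetilde{Flux}_c+a\,p^*k_{\mathbb{R}}$ defines the extended Hamiltonian group. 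The paper proves $\delta[Cal]=\frac{1}{3}a^2 e_1$; in particular for $a=0$ (the kernel of $\widetilde{Flux}_c$) the transgression vanishes identically, so your unspecified ``$\widetilde{Ham^c}$ chosen so that $P$ lands in $Ham^c$'' (a property that holds for every $a$) cannot single out the signature formula---the statement requires the specific group $\widetilde{Ham_1^c}$ attached to the pullback of the extended Flux on $Symp(\Sigma_h)$, for which $[\widetilde{Flux}]=[\widetilde{Flux}_c]+p^*[k_{\mathbb{R}}]$ by Kotschick--Morita. Second, your calibration scheme is circular or inapplicable: evaluating $\tau(Cal)$ on a Kotschick--Morita bundle of nonzero signature would require computing $Cal$ of the commutator word for lifts into $\widetilde{Ham_1^c}$, which is precisely what the theorem provides (and Corollary \ref{Tsuboi_first_MMM} is in fact used to conclude such bundles have no $\widetilde{Ham_1^c}$-holonomy), while Tsuboi's disc formula computes the Euler class of the boundary circle bundle, not $e_1$, so it cannot fix the signature normalisation. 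Finally, your appeal to $\dim H^2(\Gamma_h^1;\mathbb{R})=1$ uses Harer stability, which is not available at $h=2$, although the theorem is asserted for $h\ge 2$.

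What actually determines the constant in the paper is a cochain-level computation: the extended Calabi map satisfies the coboundary identity (\ref{Calabi_prop_1_extended}), so by Lemma \ref{connecting_hom2} one has $\delta(Cal)(\overline{\phi},\overline{\psi})=-\frac{1}{3}\,\widetilde{Flux}_c(\phi)\wedge(\phi^{-1})^*\widetilde{Flux}_c(\psi)$; on $\widetilde{Ham_a^c}(\Sigma_h^1)$ the restricted flux is $\widetilde{Flux}_c=-a\,p^*k_{\mathbb{R}}$, hence $\delta[Cal]=-\frac{1}{3}a^2[k_{\mathbb{R}}.k_{\mathbb{R}}]=\frac{1}{3}a^2 e_1$ by Morita's identity $[k_{\mathbb{R}}.k_{\mathbb{R}}]=-e_1$; taking $a=1$ and using $\langle e_1,[\Sigma_g]\rangle=3\,\sigma(E)$ yields $\sigma(E)=Cal([\phi_1,\psi_1]\cdots[\phi_g,\psi_g])$. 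Without this identification of the restricted flux with Morita's class $k_{\mathbb{R}}$ and the pairing identity (or an equivalent argument), your reduction leaves the crucial constant---and even its nonvanishing---undetermined.
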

\noindent As is the case for fillings of circle bundles, this then implies restrictions on the topology of bundles whose holonomy groups lie in the extended Hamiltonian group.

\subsection*{Outline of paper:}
In Section \ref{closed_leaves_horizontal} we show the existence of flat bundles possessing closed leaves with non-trivial normal bundles and in Section \ref{points} we compute the Abelianisation of diffeomorphism groups with marked points. In Section \ref{symp_flat_leaves} we derive similar results for symplectic surface bundles. Then after showing that the problem of filling circle bundles is solvable after stabilisation in Section \ref{fill_poss}, we recast Tsuboi's result in the language of the five-term exact sequence and extend it to higher genus fillings in Section \ref{Flat_Ham}. In Section \ref{MMM_first} we derive the Tsuboi-type formula for the signature of surface bundles with once-punctured fibre. 

For the sake of completeness we give an explicit description of the five-term exact sequence in group cohomology in Appendix \ref{App_five_term}. 

\subsection*{Acknowledgments:}
The results of this article are taken from the author's doctoral thesis, which would not have been possible without the encouragement and support of Prof.\ D.\ Kotschick. The financial support of the Deutsche Forschungsgemeinschaft is also gratefully acknowledged.

\subsection*{Notation and Conventions:}
Throughout this paper $\Sigma^r_{g,k}$ will denote a compact, oriented genus $g$ surface with $k$ marked points and $r$ boundary components. By $\Gamma^r_{g,k}$ we shall denote the mapping class group of diffeomorphisms that preserve $k$ marked points and have support in the interior of $\Sigma^r_g$. 

All bundles will be assumed to be oriented and all maps are smooth. Unless otherwise stated all homology groups will be taken with integral coefficients. Finally a topological group will be decorated with a $\delta$ when it is to be considered as a discrete group.

\section{Closed leaves and horizontal foliations}\label{closed_leaves_horizontal}
Interesting examples of codimension two foliations come from considering the horizontal foliations of flat surface bundles. In this section we will focus on the closed leaves of such foliations. Any surface bundle $\Sigma_h \to E \to B$ determines a holonomy representation \[\pi_1(B) \stackrel{\rho} \rightarrow \Gamma_h.\]
Such a bundle is then \emph{flat} if its holonomy representation $\rho$ admits a lift to the group of orientation preserving diffeomorphisms $Diff^+(\Sigma_h)$
\[\xymatrix{ & Diff^+(\Sigma_h) \ar[d] \\
\pi_1(B) \ar[r]^{\rho} \ar@{-->}[ur]^{\bar{\rho}} & \Gamma_h.} \]
If $B$ is a manifold, then this is equivalent to the existence of a foliation that is complementary to the fibres. Such a foliation will be called a \emph{horizontal} foliation.

For a flat bundle $E$ over a manifold a closed leaf of the horizontal foliation intersects each fibre in $k$ points, where $k$ is the homological intersection number of the leaf with a fibre. The existence of such a foliation is thus equivalent to a lift of the holonomy map $\rho$ to the group of diffeomorphisms fixing $k$ marked points $Diff^+(\Sigma_{h,k})$
\[\xymatrix{ & Diff^+(\Sigma_{h,k}) \ar[d] \\
\pi_1(B) \ar[r]^{\rho} \ar@{-->}[ur]^{\bar{\rho}} & \Gamma_h.} \]
By taking pullbacks under a suitable finite cover of the base, one obtains a horizontal foliation with a leaf $S$ that intersects each fibre exactly once, in which case the holonomy of $E$ lies in $Diff^+(\Sigma_{h,1})$. Moreover, the horizontal foliation induces a flat structure on the normal bundle $\nu_S$ of $S$, which is given by composing $\bar{\rho}$ with the derivative map at $p$:
\[Diff^+(\Sigma_{h,1}) \stackrel{D_p} \rightarrow GL^+(T_p \Sigma_g) = GL^+(2, \mathbb{R}).\]

In \cite{Mil}, Milnor constructed flat bundles with non-trivial Euler class over oriented surfaces and, hence, the image of the Euler class in $H^2(GL_{\delta}^+(2, \mathbb{R}))$ is non-trivial. In view of this, to show that there are flat bundles with horizontal leaves of non-zero self-intersection it will suffice to show that the map $D_p$ induces an injection $H^2(GL_{\delta}^+(2,\mathbb{R})) \rightarrow H^2(Diff_{\delta}^+(\Sigma_{h,1}))$.

To this end, we let $\mathcal{G}_p$ be the group of smooth diffeomorphism germs that fix the marked point $p$. We then define
\[\xymatrix{Diff^+(\Sigma_{h,1}) \ar[r]^<<<<\pi & \mathcal{G}_p \ar[r]^<<<<<<{\bar{D}_p} & \ar @/^1pc/[l]^s GL^+(T_p \Sigma_g)},\]
where $\pi$ is the map taking a diffeomorphism fixing $p$ to its germ at $p$ and $\bar{D}_p$ maps a germ to its linear part. The kernel of the map $\pi$ consists of diffeomorphisms with support disjoint from the marked point $p$ and will be denoted by $Diff^c(\Sigma_{h,1})$. The final map has an obvious section given by considering a linear map as an element of $\mathcal{G}_p$. Thus, to show that the map $H^2(GL_{\delta}^+(2,\mathbb{R})) \rightarrow H^2(Diff_{\delta}^+(\Sigma_{h,1}))$ is injective, it will be sufficient to show that $H^2(\mathcal{G}_p) \rightarrow H^2(Diff_{\delta}^+(\Sigma_{h,1}))$ is injective. To this end we first note the following lemma, which is proved by a simple cut-off argument (cf.\ \cite{Bow2}, Lemma 4.1.3).
\begin{lem}\label{exact_sequence}
The following sequence of groups is exact
\[ 1 \to Diff^c(\Sigma_{h,1}) \to Diff^+(\Sigma_{h,1}) \rightarrow \mathcal{G}_p \to 1.\]
\end{lem}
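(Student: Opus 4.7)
The plan is to dispose of exactness at the first two terms immediately and concentrate on surjectivity of $\pi$, which carries all the content. The inclusion $Diff^c(\Sigma_{h,1}) \hookrightarrow Diff^+(\Sigma_{h,1})$ is injective tautologically. For exactness at $Diff^+(\Sigma_{h,1})$: a diffeomorphism has trivial germ at $p$ iff it equals the identity on some open neighbourhood of $p$, which (since supports are closed) is the condition that $p$ lies outside its support, i.e.\ membership in $Diff^c(\Sigma_{h,1})$ as defined in the paragraph preceding the lemma.

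For surjectivity, given a germ $g \in \mathcal{G}_p$, I would pick a chart identifying a neighbourhood of $p$ with a neighbourhood of $0 \in \mathbb{R}^2$ and a representative $\phi$ of $g$ on a small disc with $\phi(0) = 0$. The key reduction is to factor $g$ as the product of a linear germ and a germ tangent to the identity: set $L = D_0 \phi \in GL^+(2, \mathbb{R})$ and $\bar\phi = L^{-1} \circ \phi$, so $D_0 \bar\phi = I$, and handle each factor separately. For $L$, use that $GL^+(2, \mathbb{R})$ is path-connected to pick a smooth isotopy $L_t$ from $I$ to $L$; its generating time-dependent vector field $X_t(y) = \dot L_t L_t^{-1} y$ is linear in $y$, so multiplying by a bump function equal to $1$ near $0$ yields a compactly supported time-dependent field whose time-one flow $\Phi_L$ agrees with $L$ on some neighbourhood of $0$ (linear flow lines starting from small initial conditions remain arbitrarily small throughout the isotopy).

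For $\bar\phi$, write $\bar\phi(x) = x + h(x)$ with $|h(x)| = O(|x|^2)$ and $|Dh(x)| = O(|x|)$, and define $\tilde\phi_\rho(x) = x + \chi(x/\rho) h(x)$ for a standard cutoff $\chi$ supported in the unit ball and equal to $1$ on the half-ball. The main technical point is that for $\rho$ small enough the map $\tilde\phi_\rho$ is a diffeomorphism of $\mathbb{R}^2$: the estimates $|\chi(x/\rho) Dh(x)| = O(\rho)$ and $|h(x) \otimes D\chi(x/\rho)/\rho| = O(\rho)$ show $D\tilde\phi_\rho - I$ is $C^0$-small, so $\tilde\phi_\rho$ is injective by the mean value inequality and a proper local diffeomorphism equal to the identity outside $B_\rho$, hence a global diffeomorphism agreeing with $\bar\phi$ on $B_{\rho/2}$. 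Extending both $\Phi_L$ and $\tilde\phi_\rho$ by the identity outside the chart and composing then yields a global diffeomorphism of $\Sigma_{h,1}$ with germ $L \circ \bar\phi = \phi$ at $p$. The reduction to the case $D_0 \bar\phi = I$ is essential, and is the step I expect to be the main obstacle to streamline: without it, simply cutting off the displacement $\phi - \mathrm{id}$ need not produce a diffeomorphism, since the linear part $L$ is in general not a small perturbation of the identity.
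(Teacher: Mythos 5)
Your proof is correct, and it is essentially the approach the paper intends: the paper gives no proof of this lemma, describing it only as "a simple cut-off argument" and deferring to the author's thesis, and your argument is precisely such a cut-off argument, carried out in full. The decomposition of the germ into its linear part (handled by cutting off the generating vector field of an isotopy in $GL^+(2,\mathbb{R})$) and a part tangent to the identity (handled by cutting off the displacement, with the $O(|x|^2)$ estimate making the cut-off a diffeomorphism) correctly addresses the one genuine subtlety, namely that one cannot simply truncate $\phi-\mathrm{id}$ when $D_p\phi$ is far from the identity.
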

\noindent We may now prove the existence of horizontal foliations that have compact leaves with non-trivial self-intersection numbers.
\begin{prop}\label{horizontal_leaves}
If $h \geq 3$, then there exist flat surface bundles $\Sigma_h \to E \to \Sigma_g$ with horizontal foliations that have leaves of non-zero self-intersection.
\end{prop}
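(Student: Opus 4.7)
The plan is to combine the cohomological setup laid out before the statement with Milnor's non-trivial Euler class and a bordism realization of the resulting class by an honest flat bundle.

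First I would finish the injectivity reduction. As the preceding discussion shows, it suffices to prove that $\pi^{*}\colon H^{2}(\mathcal{G}_{p}) \to H^{2}(Diff_{\delta}^{+}(\Sigma_{h,1}))$ is injective. Applying the five-term exact sequence in group cohomology (spelled out in Appendix \ref{App_five_term}) to the extension of Lemma \ref{exact_sequence} identifies the kernel of $\pi^{*}$ with the image of the transgression out of $H^{1}(Diff^{c}(\Sigma_{h,1}))^{\mathcal{G}_{p}}$. So it is enough to prove that $Diff^{c}(\Sigma_{h,1})$ is perfect.

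For perfectness I would work through the tautological extension $1 \to Diff_{0}^{c}(\Sigma_{h,1}) \to Diff^{c}(\Sigma_{h,1}) \to \Gamma_{h,1} \to 1$ and treat the two ends separately. The identity component is perfect (in fact simple) by the Thurston--Mather theorem applied to the open surface $\Sigma_{h}\setminus\{p\}$. For $\Gamma_{h,1}$ I would use the Birman exact sequence $1 \to \pi_{1}(\Sigma_{h}) \to \Gamma_{h,1} \to \Gamma_{h} \to 1$: Powell's theorem gives $H_{1}(\Gamma_{h}) = 0$ for $h\geq 3$, and since $\Gamma_{h}$ acts on $H_{1}(\Sigma_{h}) = \mathbb{Z}^{2h}$ via $Sp(2h,\mathbb{Z})$ the coinvariants vanish, so the low-degree exact sequence forces $H_{1}(\Gamma_{h,1}) = 0$. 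An extension of two perfect groups is perfect, hence $H^{1}(Diff^{c}(\Sigma_{h,1});\mathbb{Z}) = 0$ and $\pi^{*}$ is injective.

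With $D_{p}^{*}$ injective on $H^{2}$, Milnor's construction of flat oriented rank-two bundles with non-trivial Euler class guarantees $D_{p}^{*}(e) \neq 0$ in $H^{2}(Diff_{\delta}^{+}(\Sigma_{h,1});\mathbb{Z})$. Since every integral second homology class of a discrete group is carried by a closed oriented surface (the oriented bordism group $\Omega_{2}^{SO}$ agrees with $H_{2}$), there exists a homomorphism $\rho\colon \pi_{1}(\Sigma_{g}) \to Diff_{\delta}^{+}(\Sigma_{h,1})$ with $\langle D_{p}^{*}(e),\rho_{*}[\Sigma_{g}]\rangle \neq 0$. The associated flat bundle $\Sigma_{h}\to E\to \Sigma_{g}$ has a horizontal leaf $S$, coming from the marked-point section, whose normal bundle has Euler number equal to this pairing; thus $[S]^{2}\neq 0$. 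The main obstacle is the perfectness of $Diff^{c}(\Sigma_{h,1})$: the component group part is delicate because it requires combining Powell's theorem with the Birman sequence and the symplectic coinvariant calculation, rather than being a direct consequence of either ingredient alone.
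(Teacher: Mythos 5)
Your overall strategy (reduce to injectivity of $\pi^*$ via the five-term sequence, then deduce it from perfectness of $Diff^c(\Sigma_{h,1})$, then realize Milnor's class on a surface group) is the paper's strategy, but the perfectness step has a genuine gap. The extension you call tautological, $1 \to Diff_0^c(\Sigma_{h,1}) \to Diff^c(\Sigma_{h,1}) \to \Gamma_{h,1} \to 1$, is not exact: the component group of the group of diffeomorphisms supported away from $p$ is not the marked-point mapping class group $\Gamma_{h,1}$ but the mapping class group $\Gamma_h^1$ of the surface with boundary (equivalently, of compactly supported diffeomorphisms of $\Sigma_h \setminus \{p\}$). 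The kernel of $Diff^c(\Sigma_{h,1}) \to \Gamma_{h,1}$ strictly contains $Diff_0^c(\Sigma_{h,1})$: a Dehn twist along a small circle around $p$ is isotopic to the identity through diffeomorphisms fixing $p$, but not through compactly supported ones. So your computation $H_1(\Gamma_{h,1})=0$ (Birman sequence, Powell, vanishing of $Sp(2h,\mathbb{Z})$-coinvariants) proves perfectness of the wrong quotient; what is needed is $H_1(\Gamma_h^1)=0$, i.e.\ that the twist about the puncture is a product of commutators of compactly supported classes. That extra central $\mathbb{Z}$ is exactly the issue, and it does not follow from your ingredients: the five-term sequence for $1 \to \mathbb{Z} \to \Gamma_h^1 \to \Gamma_{h,1} \to 1$ only shows $H_1(\Gamma_h^1)$ is a quotient of $\mathbb{Z}$. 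The paper closes this gap by invoking Harer's stability theorem, $H_1(\Gamma_h^1) \cong H_1(\Gamma_h) = 0$ for $h \geq 3$, applied to the exhaustion $Diff^c(\Sigma_{h,1}) = \varinjlim Diff^c(\Sigma_h^{\epsilon})$. Note that the oversight you have made is precisely the subtlety the paper flags in Fukui's argument: "isotopic to the identity fixing the point" does not imply "compactly supported isotopic to the identity" (for $\mathbb{R}^2\setminus\{0\}$ the compactly supported mapping class group is $\mathbb{Z}$).

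A secondary, patchable point: in the last step you pass from $D_p^*(e) \neq 0$ in integral cohomology to the existence of a surface group representation on which it pairs non-trivially. For integral coefficients a non-zero class can pair trivially with all of $H_2$ (it may be an Ext class), so as written the inference is not valid. It can be repaired either by working with real coefficients (where $H^2(G,\mathbb{R}) \cong \mathrm{Hom}(H_2(G),\mathbb{R})$, and the injectivity argument goes through since $H_1(Diff^c_{\delta}(\Sigma_{h,1}))=0$), or, as in the paper's Proposition \ref{horizontal_leaves2}, by constructing the representation explicitly: lift Milnor's linear holonomies to diffeomorphisms equal to $A_i, B_i$ near $p$ and kill the resulting commutator defect in $Diff^c(\Sigma_{h,1})$ using its perfectness. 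Your bordism realization of $H_2$ classes by surfaces is fine once the pairing statement is secured.
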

\begin{proof}
We consider the last three terms of the five-term exact sequence in cohomology associated to the extension of groups in Lemma \ref{exact_sequence}:
\[ H^1(Diff_{\delta}^c(\Sigma_{h,1}))^{\mathcal{G}_p} \to H^2(\mathcal{G}_p) \stackrel{\pi^*} \rightarrow H^2(Diff^+_{\delta}(\Sigma_{h,1})).\]
From our discussion above it is sufficient to show that the map $\pi^*$ is injective or by exactness that $H^1(Diff_{\delta}^c(\Sigma_{h,1}))^{\mathcal{G}_p} = 0$. We claim that $H_1(Diff_{\delta}^c(\Sigma_{h,1}))$ is in fact trivial and by the Universal Coefficient Theorem the same holds in cohomology.

Let $\Sigma^{\epsilon}_h = \Sigma_h \setminus D_{\epsilon}$ denote $\Sigma_h$ with a disc of radius $\epsilon$ removed. We note that $Diff^c(\Sigma_{h,1})$ is isomorphic to the direct limit of the groups $Diff^c(\Sigma^{\epsilon}_h)$. By the stability result of Harer $H_1(\Gamma_h^1) = H_1(\Gamma_h)$ for $h \geq 3$ (cf.\ \cite{Iva}). Moreover, $\Gamma_h$ is perfect for $h \geq 3$ (see \cite{Pow}). Finally, by the classical result of Thurston the identity component of $Diff^c(\Sigma^{\epsilon}_{h})$ is also perfect (see \cite{Th2}). The five-term sequence in homology then implies that $H_1(Diff_{\delta}^c(\Sigma^{\epsilon}_h)) = 0$. Hence, each of the groups $H_1(Diff_{\delta}^c(\Sigma^{\epsilon}_h))$ is trivial and we conclude that $H_1(Diff_{\delta}^c(\Sigma_{h,1}))$ also vanishes.
\end{proof}
One may interpret the proof of Proposition \ref{horizontal_leaves} in a more geometric fashion, which gives a sharper result, and we note this in the following proposition. 

\begin{prop}\label{horizontal_leaves2}
If $h \geq 3$ and $k \in \mathbb{Z}$, then there exist flat surface bundles $\Sigma_h \to E \to \Sigma_g$ with horizontal foliations that have closed leaves of self-intersection $k$.
\end{prop}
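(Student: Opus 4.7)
The plan is to realise geometrically the cohomological argument of Proposition~\ref{horizontal_leaves}, producing a flat surface bundle whose horizontal leaf has a prescribed self-intersection. Fix $k \in \mathbb{Z}$. By Milnor's construction in \cite{Mil}, for some sufficiently large $g_0$ there exists a representation $\rho \colon \pi_1(\Sigma_{g_0}) \to GL^+(2,\mathbb{R})$ whose associated flat rank-two bundle has Euler number $k$. Since the self-intersection of a horizontal leaf of a flat surface bundle equals the Euler number of the derivative of the holonomy at the leaf, it will suffice to construct a lift $\bar{\rho}\colon \pi_1(\Sigma_{g'}) \to Diff^+(\Sigma_{h,1})$ whose linearisation $D_p \circ \bar{\rho}$ still has Euler number $k$, for some base genus $g' \geq g_0$ obtained after a mild stabilisation.

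I would identify a small disc $D$ around $p$ with a disc in $\mathbb{R}^2$ centred at the origin. For each standard generator $a_i, b_i$ of $\pi_1(\Sigma_{g_0})$, I would extend $\rho(a_i), \rho(b_i)$ to compactly supported diffeomorphisms $\tilde{A}_i, \tilde{B}_i$ of $\Sigma_h$ fixing $p$ that agree with the corresponding linear map on a smaller concentric disc $D' \subset D$ and are the identity outside $D$. Because $\rho$ is a homomorphism into $GL^+(2,\mathbb{R})$, on $D'$ the product $\phi = \prod_{i=1}^{g_0}[\tilde{A}_i, \tilde{B}_i]$ collapses to $\prod [\rho(a_i), \rho(b_i)] = I$; outside $D$ it is trivially the identity. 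Hence $\phi$ lies in $Diff^c(\Sigma^\epsilon_h)$ for some $\epsilon > 0$.

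The key step is now to invoke the vanishing of $H_1(Diff^c_\delta(\Sigma^\epsilon_h))$ proved inside Proposition~\ref{horizontal_leaves}: this lets us write $\phi^{-1} = \prod_{j=1}^r [c_j, d_j]$ with $c_j, d_j \in Diff^c(\Sigma^\epsilon_h)$. I would then augment the generating set and define $\bar{\rho}\colon \pi_1(\Sigma_{g_0+r}) \to Diff^+(\Sigma_{h,1})$ by $\bar{\rho}(a_i) = \tilde{A}_i$, $\bar{\rho}(b_i) = \tilde{B}_i$ for $i \leq g_0$ and $\bar{\rho}(a_{g_0+j}) = c_j$, $\bar{\rho}(b_{g_0+j}) = d_j$ for $j \leq r$. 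The single surface relation is then satisfied, since $\prod_{i=1}^{g_0+r}[\bar{\rho}(a_i), \bar{\rho}(b_i)] = \phi \cdot \phi^{-1} = I$. The associated flat bundle has the horizontal leaf through $p$ as a section, and the $c_j, d_j$ are supported away from $p$, so $D_p \circ \bar{\rho}$ agrees with $\rho$ on the first $2g_0$ generators and is trivial on the remaining $2r$. Its Euler number is therefore that of the original Milnor bundle, namely $k$, which is accordingly the self-intersection of the leaf.

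The main technical obstacle I anticipate is arranging the extensions $\tilde{A}_i, \tilde{B}_i$ so that they coincide \emph{exactly} with the given linear maps on a common small disc $D'$: the disc must be chosen small enough that all iterated applications of the $A_i$ and $B_i$ that appear in the commutators keep the relevant points inside $D$, so that the cancellation near $p$ is on the nose rather than only up to higher order. This is routine, for instance by integrating cut-offs of the time-dependent vector fields associated to paths from $I$ to $\rho(a_i)$ in $GL^+(2,\mathbb{R})$, but it is the one place where effort beyond the purely cohomological content of Proposition~\ref{horizontal_leaves} is required.
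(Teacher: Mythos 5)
Your proposal is correct and follows essentially the same route as the paper: realise Milnor's flat $GL^+(2,\mathbb{R})$-bundle by diffeomorphisms of $\Sigma_h$ whose germs at $p$ are the given linear maps, observe that the surface-group relator then lands in the compactly supported group away from $p$, kill it by commutators there using the perfectness established in Proposition~\ref{horizontal_leaves}, and read off the normal Euler number from $D_p$. The "technical obstacle" you flag at the end is in fact automatic once one works with germs (this is the role of Lemma~\ref{exact_sequence} and the linear section of $\mathcal{G}_p \to GL^+(2,\mathbb{R})$ in the paper): any extensions with the correct germs have a commutator product whose germ at $p$ is the identity, hence are the identity on some neighbourhood of $p$, which is all that is needed.
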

\begin{proof}
Let $a_i, b_i \in \pi_1(\Sigma_g)$ denote the standard generators of the fundamental group and let $\xi_k$ be a flat $GL^+(2, \mathbb{R})$-bundle over $\Sigma_g$ that has Euler class $k \leq g-1$ as provided by \cite{Mil}. This corresponds to a holonomy representation
\[a_i \mapsto A_i\]
\[b_i \mapsto B_i\]
for $A_i, B_i \in GL^+(2, \mathbb{R})$. Then by Lemma \ref{exact_sequence} there are diffeomorphisms $\phi_i, \psi_i$ which agree with $A_i, B_i$ in a small neighbourhood of $p$ so that the product $\eta = \prod_{i = 1}^{g} [\phi_i, \psi_i]$ has support disjoint from $p$, that is $\eta \in Diff^c(\Sigma_{h,1})$. This group is perfect and, thus, we may write $\eta^{-1} = \prod_{i = 1}^{g'} [\alpha_i, \beta_i]$ where $\alpha_i, \beta_i \in  Diff^c(\Sigma_{h,1})$. We define a flat bundle over $\Sigma_{g + g'}$ by the holonomy representation
\[a_i \mapsto \phi_i, \ \ b_i \mapsto \psi_i \text{ for } 1 \leq i \leq g\]
\[a_{g +j} \mapsto \alpha_j, \ \ b_{g +j} \mapsto \beta_j \text{ for } 1 \leq j \leq g',\]
which we denote by $\rho$. This bundle then has a compact leaf $S$ corresponding to the marked point $p$. The Euler class of the normal bundle $\nu_S$ to $S$ is computed from the induced holonomy representation $D_p \rho$:
\[a_i \mapsto D_p(\phi_i) = A_i, \ \ b_i \mapsto D_p(\psi_i) = B_i \text{ for } 1 \leq i \leq g\]
\[a_{g +j} \mapsto D_p(\alpha_j) = Id, \ \ b_{g +j} \mapsto D_p(\beta_j) = Id \text{ for } 1 \leq j \leq g'.\]
It then follows that $e(\nu_S) = k$ and thus $[S]^2 = k$.
\end{proof}
With the geometric construction of Proposition \ref{horizontal_leaves2} we are now able to say when a section $S$ of a bundle $E$ can be made a leaf of a horizontal foliation after \emph{stabilisation}. 
\begin{defn}
 A surface bundle $E'$ over a surface is called a stabilisation of a bundle $E$, if it is the fibre sum of $E$ with a trivial bundle $\Sigma_{h} \times \Sigma_{g'}$. This is then a bundle over the connected sum $\Sigma_g \# \Sigma_{g'} = \Sigma_{g + g'}$ that is trivial over the second factor.
\end{defn} 
We will show that under certain conditions any bundle $E$ with a section $S$ of self-intersection $k$ can be stabilised to a bundle $E'$ that admits a horizontal foliation with a closed leaf $S'$ that agrees with $S$ on $E'|_{\Sigma_g}$. 

If the bundle $E$ is trivial then after stabilisation it remains trivial. For a trivial bundle the Euler class of the vertical tangent bundle $e(E)$ is divisible by $2h-2$ and, hence, the same is true for the self-intersection of $S$ and its stabilisation $S'$. Thus, the condition that $[S]^2$ is divisible by $2h-2$ is, in general, necessary for the existence of a stabilisation of the desired form. It is, however, also sufficient and this is the content of the following theorem.
\begin{thm}\label{leaf_stabilisation}
Let $\Sigma_h \to E \to \Sigma_g$ be a surface bundle that has a section of self-intersection $k$, where $k$ is divisible by $2h-2$. Then after stabilisation $E$ admits a flat structure whose horizontal foliation has a closed leaf of self-intersection $k$.
\end{thm}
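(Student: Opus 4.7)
I will adapt the construction in the proof of Proposition~\ref{horizontal_leaves2}, arranging this time that the resulting flat bundle restricts, over a distinguished $\Sigma_g$-subsurface of the stabilised base, to a bundle isomorphic to $E$. Write $p \in \Sigma_h$ for the marked point representing the section $S$ and $\rho\colon\pi_1(\Sigma_g)\to\Gamma_{h,1}$ for the holonomy of $E$ (which lifts to $\Gamma_{h,1}$ because $S$ is a section).

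First fix $g_1'' \geq |k|+1$ and, by Milnor's theorem, matrices $A_j, B_j \in GL^+(2,\mathbb{R})$ for $1 \leq j \leq g_1''$ defining a flat $GL^+(2,\mathbb{R})$-bundle of Euler class $k$ over $\Sigma_{g_1''}$. For $1 \leq i \leq g$ I choose lifts $\phi_i, \psi_i \in Diff^+(\Sigma_{h,1})$ of $\rho(a_i), \rho(b_i)$ and, using Lemma~\ref{exact_sequence}, modify them so that $\phi_i = \psi_i = Id$ in a neighbourhood of $p$. For each of the $g_1''$ new generators I choose $\phi_{g+j}, \psi_{g+j} \in Diff^0(\Sigma_{h,1})$ that coincide with $A_j, B_j$ near $p$; this is possible because $GL^+(2,\mathbb{R})$ is connected, so any matrix is realised as the germ at $p$ of some element of $Diff^0(\Sigma_{h,1})$.

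Next set $\eta = \prod_{i=1}^{g+g_1''}[\phi_i,\psi_i]$. Its image in $\Gamma_{h,1}$ is trivial: on the first block by the surface relation for $\rho$, on the second because every factor already lies in $Diff^0(\Sigma_{h,1})$. Near $p$ one has $\eta = \prod_{j=1}^{g_1''}[A_j,B_j] = Id$, since the Milnor matrices genuinely represent $\pi_1(\Sigma_{g_1''})$ in $GL^+(2,\mathbb{R})$. A cut-off of an ambient isotopy of $\eta$ to the identity then places $\eta$ in $Diff^c_0(\Sigma_{h,1})$, which is perfect by Thurston's theorem; hence $\eta^{-1} = \prod_{j=1}^{g_2''}[\alpha_j,\beta_j]$ with $\alpha_j,\beta_j \in Diff^c_0(\Sigma_{h,1})$. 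Reading off the $\phi_i,\psi_i$ and $\alpha_j,\beta_j$ as holonomy on the standard generators of $\pi_1(\Sigma_{g+g_1''+g_2''})$ produces a flat bundle $E'$. Restriction to the $\Sigma_g$-summand recovers $E$, while restriction to the complementary summand has holonomy entirely in $Diff^0(\Sigma_h)$ (which is contractible for $h \geq 2$), so that part is the trivial bundle; thus $E'$ is a stabilisation of $E$. The horizontal leaf through $p$ is a section whose normal bundle carries the flat $GL^+(2,\mathbb{R})$-structure obtained by applying $D_p$ to the holonomy; this reproduces the Milnor bundle of Euler class $k$, so the leaf has self-intersection $k$.

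The main technical point I expect to fuss over is confirming that $\eta$ lies in the identity component $Diff^c_0(\Sigma_{h,1})$, since a priori one only knows $\eta \in Diff^c(\Sigma_{h,1}) \cap Diff^0(\Sigma_{h,1})$; the passage between these groups requires a fragmentation/cut-off argument for isotopies near $p$, in the spirit of the reference cited after Lemma~\ref{exact_sequence}. The divisibility hypothesis $(2h-2)\mid k$ is not directly used in this construction; as explained just before the theorem, it is included because it is automatically satisfied, and necessary, in the case when $E$ itself is already trivial.
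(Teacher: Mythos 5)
The crux of your argument is the assertion that $\eta$ lies in $Diff_0^c(\Sigma_{h,1})$, and this is exactly where the proof has a gap. You justify it by ``a cut-off of an ambient isotopy of $\eta$ to the identity'', but an isotopy of $\eta$ to the identity through diffeomorphisms fixing $p$ cannot simply be cut off near $p$: the intermediate stages move points arbitrarily close to $p$, and this is precisely the move the paper points out is illegitimate in Fukui's argument (see the discussion preceding Theorem \ref{Fukui}). What you actually know is that $\eta\in Diff^c(\Sigma_{h,1})$ and that its image in $\Gamma_{h,1}$ is trivial; membership in the identity component is governed by the class of $\eta$ in $\pi_0 Diff^c(\Sigma_{h,1})=\Gamma_h^1$, and the kernel of $\Gamma_h^1\to\Gamma_{h,1}$ is the infinite cyclic group generated by the boundary twist $\Delta_{\partial}$. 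So you owe a computation of this class, and in your construction it receives two a priori nontrivial contributions: the block of identity-near-$p$ lifts of $\rho$ contributes $\Delta_{\partial}^{\pm k}$, because $1\to\mathbb{Z}\to\Gamma_h^1\to\Gamma_{h,1}\to 1$ is the central extension classified by the vertical Euler class (Morita), whose evaluation on $\rho$ is $[S]^2=k$; and the Milnor block contributes $\Delta_{\partial}^{\pm k}$ through the winding of $\prod_j[\tilde{A}_j,\tilde{B}_j]$ in the universal cover of $GL^+(2,\mathbb{R})$. Whether these two contributions cancel or add is a delicate sign question that your write-up never engages with, and without it the assertion $\eta\in Diff_0^c(\Sigma_{h,1})$ is unproved.

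This is also why your closing remark, that the divisibility hypothesis $(2h-2)\mid k$ is ``not directly used'', should have set off alarm bells. In the paper's proof the identification of $[\eta]$ with a power of $\Delta_{\partial}$ and the divisibility hypothesis are exactly what make the correction step work: divisibility kills the class in $H_1(T_1\Sigma_h)$ (where the fibre class has order $2h-2$), so $\eta$ can first be corrected by commutators of mapping classes in $\pi_1(T_1\Sigma_h)=\ker(\Gamma_h^1\to\Gamma_h)$ -- which are trivial in $\Gamma_h$, so the result is still a stabilisation of $E$ -- and only afterwards does Thurston's perfectness of the identity component enter. Your variant (carrying the holonomy of $E$ on representatives that are the identity near $p$, and the linear germ data on new handles) is a reasonable idea, but until you compute the $\Delta_{\partial}$-obstruction for your $\eta$ and either show it vanishes or explain how to kill it using corrections that are trivial in $\Gamma_h$, the proof is incomplete at its decisive step; if the obstruction does not vanish, you would be forced back into precisely the two-stage correction of the paper, where the divisibility hypothesis is indispensable.
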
 
\begin{proof}
We first stabilise $E$ until the Milnor-Wood equality is satisfied for $S$. We let $\bar{g} = g + g'$ denote the genus of the base of the stabilisation and let $\bar{\rho}: \pi_1(\Sigma_{\bar{g}}) \to \Gamma_{h,1}$ be its holonomy representation. Since the Milnor-Wood inequality is satisfied for $S$, it has a tubular neighbourhood $\nu_S$ that is diffeomorphic to a flat $GL^+(2,\mathbb{R})$-bundle. We let $\xi$ denote the corresponding horizontal foliation on $\nu_S$. We extend $\xi$ to a horizontal distribution $\xi'$ that agrees with $\xi$ on a (possibly smaller) neighbourhood of $S$. We choose curves $a_i,b_i$ representing the standard generators of $\pi_1(\Sigma_{\bar{g}})$ and let $\phi_i, \psi_i \in  Diff^+(\Sigma_h)$ be the holonomy maps induced by $\xi'$, so that $[\phi_i] = \bar{\rho}(a_i)$ and $\ [\psi_i]  = \bar{\rho}(b_i)$ in  $\Gamma_h$. Note that these diffeomorphisms depend on the choice of curves and not just their homotopy classes.

By construction the distribution $\xi'$ is a foliation in a neighbourhood of $S$. Hence the product of commutators $\eta= \prod_{i = 1}^{\bar{g}} [\phi_i, \psi_i]$ has compact support disjoint from the marked point corresponding to the section $S$, and is thus an element in $Diff^c(\Sigma_h^1)$. We next consider the following diagram that relates the mapping class groups $\Gamma_h^1, \Gamma_{h,1}$ and $\Gamma_h$:
\[\xymatrix{  & 1 \ar[d] & 1 \ar[d]   & & \\
& \mathbb{Z} \ar[d] & \mathbb{Z} \ar[d] & &\\
1 \ar[r] & \pi_1(T_1\Sigma_h) \ar[r] \ar[d] & \Gamma_h^1 \ar[d] \ar[r] & \Gamma_h \ar[r]& 1\\
1 \ar[r] &  \pi_1(\Sigma_h) \ar[r] \ar[d] & \Gamma_{h,1} \ar[d] \ar[r] & \Gamma_h \ar[r]&1.\\
& 1 & 1 & & }\]
Here $\mathbb{Z}$ is generated by a positive Dehn twist $\Delta_{\partial}$ along an embedded curve parallel to the boundary of $\Sigma_h^1$ and $T_1\Sigma_h$ denotes the unit tangent bundle of $\Sigma_h$.

Now the image of $\eta$ in $\Gamma_{h,1}$ is trivial. Thus $\eta = \Delta_{\partial}^k$ where $k$ is the self-intersection of $S$. This is because the second column is the central extension corresponding to the vertical Euler class as a characteristic class in the group cohomology of $\Gamma_{h,1}$ (cf.\ \cite{Mor}). By assumption $k$ is divisible by $2h-2$ and hence $\eta \in H_1(\pi_1(T_1\Sigma_h)) = H_1(T_1\Sigma_h)$ is trivial. Again this is because the left most column is the central extension corresponding to the Euler class of the unit tangent bundle over $\Sigma_h$ and $[\eta] $ is a multiple of the fibre class of this $S^1$-bundle that is divisible by $2h -2$. Hence $\eta^{-1} = \prod_{j = 1}^{N} [\alpha_j, \beta_j]$ is a product of commutators in $\Gamma_h^1$, each of which lie in the kernel of the natural map to $\Gamma_h$. 

We let $\phi_j, \psi_j \in Diff^+(\Sigma_h^1)$ be representatives of the mapping classes $\alpha_j, \beta_j$ respectively, and consider the product $\gamma = \eta \thinspace \prod_{j = 1}^{N} [\phi_j, \psi_j]$ in $Diff_0^c(\Sigma_h^1)$. Since this group is perfect we may write $\gamma^{-1} = \prod_{l = 1}^{M} [\gamma_l, \delta_l]$. Letting $a_i, b_i$ be standard the  generators for $\pi_1(\Sigma_{\bar{g} +N +M})$, we define a flat bundle via the holonomy map
\[a_i \mapsto \phi_i, \ \ b_i \mapsto \psi_i \text{ for } 1 \leq i \leq \bar{g}\]
\[a_{\bar{g} +j} \mapsto \alpha_j, \ \ b_{\bar{g} +j} \mapsto \beta_j \text{ for } 1 \leq j \leq N\]
\[a_{\bar{g} +N + l} \mapsto \gamma_l, \ \ b_{\bar{g} + N + l} \mapsto \delta_l \text{ for } 1 \leq l \leq M.\]
This gives a horizontal foliation with a closed leaf of self-intersection $k$ on a stabilisation of $E$. That the bundle is a stabilisation of the original bundle follows since the mapping classes represented by $\alpha_j,\beta_j, \gamma_l,, \delta_l$ are trivial  in $\Gamma_h$.
\end{proof}
In \cite{BCS} Bestvina, Church and Souto show the non-existence of certain lifts of bundles with sections to the diffeomorphism group with marked points, using the bounds on the Euler class given by the Milnor-Wood inequality. In particular, the diagonal section in the product of two genus $h$ surfaces provides such an example. However, by Theorem \ref{leaf_stabilisation}, these examples do possess lifts after stabilisation. 

\section{Abelianisation of diffeomorphism groups with marked points}\label{points}
Our discussion above will enable us to calculate the first group homology of $Diff^+(\Sigma_{h,1})$ and, in particular, we will show that this group is not perfect. In fact it is clear that the group $Diff^+(\Sigma_{h,1})$ is not perfect, as there is a surjection to $GL^+(2, \mathbb{R})$ given by the derivative map and
\[H_1(GL_{\delta}^+(2, \mathbb{R})) = H_1((SL(2, \mathbb{R}) \times  \mathbb{R}^+)_{\delta}) =  \mathbb{R}^+, \]
since $SL(2, \mathbb{R})$ is a perfect group. But this is the only contribution to $H_1(Diff_{\delta}^+(\Sigma_{h,1}))$ if $h \geq 3$. The proof of this fact is based on the following result due to Sternberg.
\begin{thm}[Sternberg's Linearisation Theorem, \cite{Ster}]\label{Sternberg_linearisation}
Let $\phi$ be a smooth diffeomorphism defined in a neighbourhood $U$ of the origin in $\mathbb{R}^n$ and let $\phi(0) = 0$. Further, let $s_1, ...s_n \in \mathbb{C}$ denote the eigenvalues (counted with multiplicities) of the Jacobian $D_0 (\phi)$ at the origin and assume that
\[s_i \neq s_1^{m_1}...s_n^{m_n},\]
for all non-negative integers $m_1, ..., m_n$ with $\sum m_i > 1$. Then there is a change of coordinates $\psi$ that fixes the origin so that on a possibly smaller neighbourhood $W \subset U$ the following holds
\[ \psi \phi \psi ^{-1} = D_0 (\phi).\]
\end{thm}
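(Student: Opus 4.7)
The plan is to follow the two-stage strategy that goes back to Sternberg: first linearise $\phi$ formally as a power series conjugacy, then upgrade this formal solution to a genuine smooth change of coordinates. After diagonalising over $\mathbb{C}$, write $\phi(x) = Ax + \phi_{\geq 2}(x)$, where $A = D_0\phi$ has eigenvalues $s_1,\dots,s_n$, and seek a formal germ $\psi = \mathrm{id} + \sum_{k\geq 2}\psi_k$ with $\psi_k$ homogeneous of degree $k$ satisfying $\psi\circ\phi = A\circ\psi$.

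Comparing terms of degree $k$ inductively reduces the problem at each step to the cohomological equation
\[
A\,\psi_k - \psi_k\circ A \;=\; H_k,
\]
where $H_k$ is a polynomial expression in the coefficients of $\phi$ and of the already determined $\psi_2,\dots,\psi_{k-1}$. On the monomial basis $x_1^{m_1}\cdots x_n^{m_n}\,e_i$ with $|m| = k \geq 2$ this linear operator is diagonal with eigenvalue $s_i - s_1^{m_1}\cdots s_n^{m_n}$, and the non-resonance hypothesis is precisely the statement that none of these eigenvalues vanish. Thus $\psi_k$ is uniquely determined at each order, and one obtains a formal solution to all orders.

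Borel's theorem then realises this formal power series as the Taylor expansion at $0$ of an actual smooth germ $\psi_0$, and conjugating by $\psi_0$ replaces $\phi$ by a smooth germ of the form $\phi_1(x) = Ax + R(x)$, where $R$ is flat at the origin. The hard part is then to kill the flat remainder $R$ by a further smooth conjugacy tangent to the identity to infinite order: here the algebraic non-resonance is no longer enough, and one needs quantitative estimates. My plan is to establish this via a contraction argument on a suitable Banach space of flat perturbations, combining uniform lower bounds on the non-resonance denominators at each finite order with a choice of sufficiently small neighbourhood $W \subset U$ so that an iterative scheme -- or, equivalently, a path/homotopy method along a family $\phi_t = Ax + tR$ -- converges in the $C^\infty$ topology. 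This analytic step is the genuine content of Sternberg's theorem; the formal part is essentially linear algebra, but passing from a formal conjugacy to an honest smooth one is where all the real work lies.
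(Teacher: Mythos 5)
The paper does not prove this statement at all: Theorem \ref{Sternberg_linearisation} is quoted from Sternberg's paper \cite{Ster} as a classical input, so the benchmark your proposal has to meet is a complete proof of Sternberg's theorem, not a comparison with an in-paper argument. Your formal step is essentially correct and standard: the homological operator $\psi_k \mapsto A\psi_k - \psi_k\circ A$ on homogeneous vector-valued polynomials has eigenvalues $s_i - s_1^{m_1}\cdots s_n^{m_n}$ (it is merely triangular, not diagonal, when $A$ is not diagonalisable, but that does not affect invertibility), non-resonance gives a unique formal conjugacy, Borel realises it smoothly, and one should also remark that the formal solution can be chosen real since the resonance set is conjugation-invariant. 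Up to this point the argument is fine.

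The genuine gap is the step you yourself flag as ``the real work'': removing the flat remainder $R$. You only announce a plan, and the plan points at the wrong mechanism. Once the remainder is flat, the non-resonance denominators at finite order play no role whatsoever; what makes the flat part removable is \emph{hyperbolicity} of $A$, i.e.\ the absence of eigenvalues on the unit circle, which supplies the exponential estimates on $A^{\pm n}$ that drive any convergent iteration. Note that your hypothesis does imply hyperbolicity -- for a real matrix, an eigenvalue of modulus one is automatically resonant ($s = s^2\bar{s}$ if $s$ is non-real of modulus one, $s = s^3$ if $s=-1$, $s=s^2$ if $s=1$) -- but you never observe or use this, and without it a ``contraction argument on a Banach space of flat perturbations'' or a homotopy along $\phi_t = Ax + tR$ has no source of convergence; in the elliptic, small-divisor regime the formal conjugacy can exist while smooth linearisation fails, so no argument quoting only the algebraic non-resonance at each finite order can work. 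A correct completion must exploit hyperbolicity explicitly, e.g.\ first in the purely contracting case by showing that $\psi = \lim_{n\to\infty} A^{-n}\circ\phi_1^{\,n}$ converges in $C^\infty$ near $0$ because $R$ is flat and $\|A^{-n}\|\,\mu^{kn}\to 0$ for the relevant rates $\mu<1$ and all orders $k$, and then in the general case by splitting into stable and unstable directions and gluing the conjugacies along the invariant manifolds, with the attendant derivative estimates. As written, your proposal is an outline whose decisive analytic step is missing.
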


\begin{rem}\label{Germ_version} We note that the hypotheses of Theorem \ref{Sternberg_linearisation} hold, in particular, if $D_0 (\phi) = \lambda \thinspace Id$ for $\lambda \neq 0, 1$. Sternberg's Theorem may also be interpreted in terms of germs of diffeomorphisms, i.e. if the hypotheses of the theorem are satisfied for a germ $\phi \in \mathcal{G}_p$, then $\phi$ is conjugate to the germ represented by $D_p (\phi)$.
\end{rem}
\begin{prop}\label{not_perfect}
Let $\Sigma_h$ be a surface of genus $h \geq 3$. Then $H_1(Diff_{\delta}^+(\Sigma_{h,1})) = \mathbb{R}^+$.
\end{prop}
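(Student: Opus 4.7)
The plan is to combine the machinery already in hand (the extension of Lemma \ref{exact_sequence} together with the vanishing $H_1(Diff_{\delta}^c(\Sigma_{h,1})) = 0$ established in the proof of Proposition \ref{horizontal_leaves}) with Sternberg's linearisation to reduce the computation to the Abelianisation of $GL^+_{\delta}(2,\mathbb{R})$.

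First, I would apply the five-term exact sequence in homology to the extension of Lemma \ref{exact_sequence}:
\[H_1(Diff_{\delta}^c(\Sigma_{h,1}))_{\mathcal{G}_p} \to H_1(Diff_{\delta}^+(\Sigma_{h,1})) \to H_1(\mathcal{G}_p) \to 0.\]
Since $H_1(Diff_{\delta}^c(\Sigma_{h,1})) = 0$ for $h \geq 3$, this gives an isomorphism $H_1(Diff_{\delta}^+(\Sigma_{h,1})) \cong H_1(\mathcal{G}_p)$, so the task reduces to computing the Abelianisation of the germ group.

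Next I would use the short exact sequence
\[1 \to \mathcal{G}_p^1 \to \mathcal{G}_p \stackrel{\bar{D}_p}{\to} GL^+(2,\mathbb{R}) \to 1,\]
where $\mathcal{G}_p^1$ consists of germs with identity linear part. This sequence splits via the section $s$ of linear germs, and $H_1(GL^+_{\delta}(2,\mathbb{R})) = \mathbb{R}^+$ as noted in the text. Consequently it suffices to show that every $\phi \in \mathcal{G}_p^1$ is a single commutator in $\mathcal{G}_p$, for then its class in $H_1(\mathcal{G}_p)$ vanishes and the split surjection $\mathcal{G}_p \to GL^+(2,\mathbb{R})$ induces the desired isomorphism on $H_1$.

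The key step, and the one where Sternberg enters, is the following commutator trick. Pick a real number $\lambda \neq 0, \pm 1$ and let $L_\lambda = \lambda \cdot Id \in \mathcal{G}_p$. Given $\phi \in \mathcal{G}_p^1$, the composition $L_\lambda \phi$ has linear part $\lambda \cdot Id$ at $p$, and this scalar satisfies the non-resonance hypothesis of Theorem \ref{Sternberg_linearisation} (the only relation $\lambda = \lambda^{m_1+m_2}$ with $m_1+m_2 > 1$ would force $\lambda$ to be a root of unity). By Remark \ref{Germ_version}, there exists $g \in \mathcal{G}_p$ with $g(L_\lambda \phi)g^{-1} = L_\lambda$, whence
\[\phi = L_\lambda^{-1}\,(g^{-1} L_\lambda g) = [L_\lambda^{-1}, g^{-1}].\]
This realises every element of $\mathcal{G}_p^1$ as a commutator in $\mathcal{G}_p$, completing the argument.

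The only delicate point is verifying the hypotheses of Sternberg for $L_\lambda \phi$, which is why $\lambda$ must be chosen to avoid $0$ and the roots of unity; beyond that the proof is purely formal once the reduction to germs is made.
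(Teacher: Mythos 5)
Your proposal is correct and takes essentially the same route as the paper: the same reduction via the extension of Lemma \ref{exact_sequence} to $H_1(\mathcal{G}_p)$, the same germ sequence over $GL^+(2,\mathbb{R})$, and the same Sternberg commutator trick applied to $\lambda\,Id\cdot\phi$ with $\lambda$ chosen non-resonant. The only (harmless) difference is in the wrap-up: you exhibit each $\phi\in\mathcal{G}_{p,Id}$ directly as a commutator in $\mathcal{G}_p$ and quotient out, whereas the paper first normalises the conjugating germ to lie in $\mathcal{G}_{p,Id}$ (using centrality of $\lambda\,Id$) so as to deduce vanishing of the coinvariants $H_1(\mathcal{G}_{p,Id})_{GL^+(2,\mathbb{R})}$ and then applies the five-term exact sequence.
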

\begin{proof}
We consider the extension given in Lemma \ref{exact_sequence}
\[ 1 \to Diff^c(\Sigma_{h,1}) \to Diff^+(\Sigma_{h,1}) \stackrel{\pi} \rightarrow \mathcal{G}_p \to 1.\]
Since the group $Diff^c(\Sigma_{h,1})$ is perfect (cf.\ Proposition \ref{horizontal_leaves}), the associated five-term exact sequence yields $H_1(Diff_{\delta}^+(\Sigma_{h,1})) = H_1(\mathcal{G}_p)$. We next consider the exact sequence
\[  1 \to \mathcal{G}_{p,Id} \to \mathcal{G}_p \stackrel{D_p} \rightarrow GL^+(2, \mathbb{R}) \to 1,\]
where $ \mathcal{G}_{p,Id} $ is the set of germs whose linear part is the identity. By Remark \ref{Germ_version} above, if $\phi \in \mathcal{G}_{p}$ and $D_p (\phi) = \lambda \thinspace Id$ for some $\lambda > 1$, then there is a $ \psi \in  \mathcal{G}_{p}$ so that $\psi \phi \psi ^{-1} = \lambda \thinspace Id$. We set $A_{\lambda} =  \lambda \thinspace Id$, then for any $\phi \in  \mathcal{G}_{p,Id}$, there is a germ $\psi$ such that
\[A_{\lambda} = \psi ( A_{\lambda} \phi ) \psi ^{-1}.\]
Since $A_{\lambda}$ is central in $GL^+(2,\mathbb{R})$, we may assume that $\psi \in \mathcal{G}_{p,Id}$ after conjugating the above equation with the element $D_p \psi$. Thus $\phi = \psi^{-1} A_{\lambda}^{-1} \psi A_{\lambda} = [\psi^{-1}, A_{\lambda}^{-1}]$ and we have shown that $H_1(\mathcal{G}_{p,Id})_{GL^+(2, \mathbb{R})} = 0$. 

In view of this, the five-term exact sequence gives
\[ 0 = H_1(G_{p,Id})_{GL^+(2, \mathbb{R})} \to H_1(\mathcal{G}_p ) \to H_1(GL_{\delta}^+(2, \mathbb{R})) \to 0\]
and, hence, $H_1(Diff_{\delta}^+(\Sigma_{h,1}))=H_1(GL_{\delta}^+(2,\mathbb{R})) = \mathbb{R}^+$.
\end{proof}
We let $PDiff^+(\Sigma_{h,k})$ denote the group of pure orientation preserving diffeomorphisms, i.e. an element $\phi \in PDiff^+(\Sigma_{h,k})$ is a diffeomorphism of $\Sigma_h$, which fixes a set of $k$ marked points \emph{pointwise}. With exactly the same argument as in Proposition \ref{not_perfect} we obtain the following.
\begin{prop}\label{not_perfect2}
Let $\Sigma_h$ be a surface of genus $h \geq 3$. Then $H_1(PDiff_{\delta}^+(\Sigma_{h,k})) = (\mathbb{R}^+)^k$.
\end{prop}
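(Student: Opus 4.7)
The plan is to mimic the proof of Proposition \ref{not_perfect} with the single germ group $\mathcal{G}_p$ replaced by a product of $k$ germ groups, one for each marked point. First I would establish the analogue of Lemma \ref{exact_sequence} for the pure diffeomorphism group: there is a short exact sequence
\[ 1 \to PDiff^c(\Sigma_{h,k}) \to PDiff^+(\Sigma_{h,k}) \stackrel{\pi} \rightarrow \mathcal{G}_{p_1} \times \cdots \times \mathcal{G}_{p_k} \to 1,\]
where $PDiff^c(\Sigma_{h,k})$ denotes diffeomorphisms with support disjoint from the marked points, and $\pi$ assigns to a diffeomorphism its tuple of germs at the $k$ fixed points. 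The surjectivity of $\pi$ follows from the same cut-off argument used to prove Lemma \ref{exact_sequence}, applied independently in disjoint neighbourhoods of the $p_i$.

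Next I would show that $PDiff^c_\delta(\Sigma_{h,k})$ has trivial first homology. This proceeds exactly as in Proposition \ref{horizontal_leaves}: write $PDiff^c(\Sigma_{h,k})$ as a direct limit of groups $Diff^c(\Sigma_h^\epsilon)$, where $\Sigma_h^\epsilon$ is now $\Sigma_h$ with $k$ open discs of radius $\epsilon$ removed. The five-term sequence associated to the extension
\[1\to Diff_0^c(\Sigma_h^\epsilon) \to Diff^c(\Sigma_h^\epsilon) \to \Gamma_h^k \to 1\]
is then controlled by Harer stability (which gives $H_1(\Gamma_h^k)=H_1(\Gamma_h)$ for $h \geq 3$), Powell's theorem ($\Gamma_h$ is perfect for $h\ge 3$), and Thurston's theorem that the identity component is perfect.

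With these ingredients, the five-term exact sequence attached to the displayed extension yields
\[H_1(PDiff_\delta^+(\Sigma_{h,k})) \cong H_1(\mathcal{G}_{p_1}\times\cdots\times\mathcal{G}_{p_k}) \cong \bigoplus_{i=1}^k H_1(\mathcal{G}_{p_i}),\]
where the second isomorphism is the standard decomposition of $H_1$ of a product as a direct sum. Finally, the linearisation argument of Proposition \ref{not_perfect}, applied verbatim at each marked point $p_i$, gives $H_1(\mathcal{G}_{p_i}) = H_1(GL_\delta^+(2,\mathbb{R})) = \mathbb{R}^+$, and the result follows.

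I do not expect any serious new obstacle: the only point that requires any care is checking that the Sternberg linearisation/commutator computation at a single germ group really does pass through in each factor independently, which is automatic since the factors $\mathcal{G}_{p_i}$ act trivially on one another in the product. The most delicate step is arguably the perfectness of $PDiff^c(\Sigma_{h,k})$, but this is handled by the same three classical inputs (Harer, Powell, Thurston) as in the one-marked-point case.
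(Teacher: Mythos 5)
Your proposal is correct and is exactly the argument the paper intends: the paper's proof of this proposition is simply the remark that one argues ``with exactly the same argument as in Proposition \ref{not_perfect}'', and your write-up (the exact sequence onto the product $\mathcal{G}_{p_1}\times\cdots\times\mathcal{G}_{p_k}$, perfectness of the compactly supported pure group via $k$ removed discs together with Harer stability, Powell and Thurston, and then Sternberg linearisation applied factorwise) is the natural filling-in of those details. No gaps.
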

Using Proposition \ref{not_perfect2} it is now possible to compute the first homology of the full diffeomorphism group $Diff^+(\Sigma_{h,k})$. 
\begin{thm}\label{not_perfect3}
Let $\Sigma_h$ be a surface of genus $h \geq 3$ and let $k \geq 2$. Then 
\[H_1(Diff_{\delta}^+(\Sigma_{h,k})) = \mathbb{R}^+ \times \mathbb{Z}_2.\]
\end{thm}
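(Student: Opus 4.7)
The plan is to exploit the short exact sequence obtained by forgetting how the marked points are permuted:
\[1 \to PDiff^+(\Sigma_{h,k}) \to Diff^+(\Sigma_{h,k}) \to S_k \to 1,\]
where $S_k$ is the symmetric group on the $k$ marked points. I will feed this into the five-term exact sequence in homology and use Proposition \ref{not_perfect2} to pin down the middle term.

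First I would identify the action of $S_k$ on $H_1(PDiff_{\delta}^+(\Sigma_{h,k})) = (\mathbb{R}^+)^k$. Because the isomorphism of Proposition \ref{not_perfect2} is induced by reading off the determinant (modulo scaling by $SL$) of the derivative at each marked point, and because conjugating by a diffeomorphism that permutes the marked points correspondingly permutes these derivative factors, the induced action of $S_k$ on $(\mathbb{R}^+)^k$ is just the permutation action on coordinates. The coinvariants of this action are therefore a single copy of $\mathbb{R}^+$, obtained by summing the factors (equivalently, taking the product of determinants).

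The relevant portion of the five-term sequence reads
\[H_2(S_k) \longrightarrow H_1(PDiff_{\delta}^+(\Sigma_{h,k}))_{S_k} \longrightarrow H_1(Diff_{\delta}^+(\Sigma_{h,k})) \longrightarrow H_1(S_k) \longrightarrow 0.\]
The group $H_2(S_k)$ is finite (trivial for $k \leq 3$ and $\mathbb{Z}_2$ for $k \geq 4$), so any homomorphism from it to the torsion-free group $\mathbb{R}^+$ vanishes. Also $H_1(S_k) = \mathbb{Z}_2$ via the sign homomorphism since $k \geq 2$. So the sequence collapses to a short exact sequence
\[0 \longrightarrow \mathbb{R}^+ \longrightarrow H_1(Diff_{\delta}^+(\Sigma_{h,k})) \longrightarrow \mathbb{Z}_2 \longrightarrow 0.\]

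Finally I would observe that this extension of abelian groups splits automatically: $\mathbb{R}^+$ is a divisible abelian group and hence injective in the category of abelian groups, so every extension by $\mathbb{R}^+$ as kernel is trivial. This yields $H_1(Diff_{\delta}^+(\Sigma_{h,k})) \cong \mathbb{R}^+ \times \mathbb{Z}_2$. The one step that requires a bit of care is the identification of the $S_k$-action on $(\mathbb{R}^+)^k$ in the first paragraph; otherwise the argument is a formal consequence of Proposition \ref{not_perfect2} together with standard divisibility and order considerations.
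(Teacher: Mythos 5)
Your proposal is correct and follows essentially the same route as the paper: the same extension $1 \to PDiff^+(\Sigma_{h,k}) \to Diff^+(\Sigma_{h,k}) \to S_k \to 1$, the same five-term sequence with the connecting map killed by comparing the torsion group $H_2(S_k)$ with the torsion-free $\mathbb{R}^+$, and the same splitting of the resulting extension by $\mathbb{Z}_2$ (your appeal to divisibility/injectivity of $\mathbb{R}^+$ is just a mild generalisation of the paper's observation that every element of $\mathbb{R}^+$ has a square root). Your explicit identification of the $S_k$-action on $(\mathbb{R}^+)^k$ as the permutation of determinant factors is a point the paper leaves implicit, and it is the correct justification for the coinvariants being $\mathbb{R}^+$.
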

\begin{proof}
By considering the action on the marked points induced by $Diff^+(\Sigma_{h,k})$, we obtain the following extension of groups
\[1 \to PDiff^+(\Sigma_{h,k}) \to Diff(\Sigma_{h,k}) \to S_k \to 1.\]
The five-term exact sequence for group homology then gives
\[ H_2(S_k) \stackrel{\partial} \rightarrow H_1(PDiff_{\delta}^+(\Sigma_{h,k}))_{S_k} \to H_1(Diff_{\delta}^+(\Sigma_{h,k}))  \to H_1(S_k) \to 0.\]
By Proposition \ref{not_perfect2} we have that $H_1(PDiff_{\delta}^+(\Sigma_{h,k})) = (\mathbb{R}^+)^k$, which, in particular, implies that $H_1(PDiff_{\delta}^+(\Sigma_{h,k}))_{S_k} = \mathbb{R}^+$. Since $S_k$ is a finite, the group $H_2(S_k)$ consists entirely of torsion elements. Hence, as $\mathbb{R}^+$ is torsion free the connecting homomorphism $\partial$ is trivial and we obtain the following short exact sequence:
\[0 \to \mathbb{R}^+ \to H_1(Diff_{\delta}^+(\Sigma_{h,k})) \to H_1(S_k) = \mathbb{Z}_2 \to 0.\]
Finally, since in $\mathbb{R}^+$ every element has a square root, this extension has a section and we conclude that 
\[H_1(Diff_{\delta}^+(\Sigma_{h,k})) = \mathbb{R}^+ \times  \mathbb{Z}_2. \qedhere\]
\end{proof}

The proof of Proposition \ref{not_perfect} above will allow us to calculate the first group homology of $Diff^c(\mathbb{R}^2,0)$, which here denotes the group of diffeomorphisms of the plane that have compact support and fix the origin. This fact was stated in a more general form by Fukui in \cite{Fuk}, however his argument appears to be incomplete. 

Fukui argues as follows (see \cite{Fuk}, p.\ 485). Let $\phi \in Diff^c(\mathbb{R}^n,0)$ have $D_0 \phi = Id$, then there is a product of commutators so that $\eta = \phi \prod_{i = 1}^{g'} [\alpha_i, \beta_i]$ is the identity on some neighbourhood of $0$. He then claims that by Thurston's result on the perfectness of the identity component the group of diffeomorphisms, we may write $\eta$ as a product of commutators of elements in $Diff^c(\mathbb{R}^n \setminus \{0 \})$, which denotes the group of compactly supported diffeomorphisms of $\mathbb{R}^n \setminus 0$. In order to apply the result of Thurston, one must have that $\eta$ is isotopic to the identity through diffeomorphisms with compact support away from the origin. However, it is not clear that $\eta$ is isotopic to the identity through diffeomorphisms with support disjoint from the origin. In fact for $n = 2$ the mapping class group of compactly supported diffeomorphisms on $\mathbb{R}^2 \setminus \{ 0 \}$ is isomorphic to $\mathbb{Z}$ (cf.\ \cite{Iva}, Cor.\ 2.7 E). As a corollary of the results we have obtained thus far we are now able to give a complete proof of the theorem stated by Fukui in the case $n = 2$.
\begin{thm}\label{Fukui}
$H_1(Diff^c_{\delta}(\mathbb{R}^2, 0)) = \mathbb{R}^{+}$.
\end{thm}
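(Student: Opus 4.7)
The plan is to imitate the proof of Proposition~\ref{not_perfect}. Via a cut-off argument as in Lemma~\ref{exact_sequence}, I first produce the short exact sequence
\[
1 \to Diff^c(\mathbb{R}^2\setminus\{0\}) \to Diff^c(\mathbb{R}^2,0) \stackrel{\pi}\rightarrow \mathcal{G}_0 \to 1,
\]
where $\mathcal{G}_0$ denotes the group of germs at the origin of diffeomorphisms fixing $0$; this is the same abstract group as the $\mathcal{G}_p$ appearing in Proposition~\ref{not_perfect}. The associated five-term sequence in group homology reads
\[
H_1(Diff^c_{\delta}(\mathbb{R}^2\setminus\{0\}))_{\mathcal{G}_0} \to H_1(Diff^c_{\delta}(\mathbb{R}^2,0)) \to H_1(\mathcal{G}_0) \to 0.
\]

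Since the Sternberg-based computation of $H_1(\mathcal{G}_p)$ in the proof of Proposition~\ref{not_perfect} is entirely local, it transfers verbatim to give $H_1(\mathcal{G}_0) = H_1(GL^+_{\delta}(2,\mathbb{R})) = \mathbb{R}^+$; the derivative at $0$ then supplies a surjection $H_1(Diff^c_{\delta}(\mathbb{R}^2,0)) \twoheadrightarrow \mathbb{R}^+$. To conclude it is enough to show that the leftmost map in the five-term sequence above is zero. Using Thurston's theorem for $\mathbb{R}^2\setminus\{0\}$ together with the identification (recalled just before the theorem) of the compactly supported mapping class group of the punctured plane with $\mathbb{Z}$, one has $H_1(Diff^c_{\delta}(\mathbb{R}^2\setminus\{0\})) = \mathbb{Z}$, generated by a Dehn twist $T$ around the origin. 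The theorem therefore reduces to the assertion that $T$ is a product of commutators in $Diff^c(\mathbb{R}^2,0)$ --- precisely the point at which Fukui's argument, as discussed above, has a gap.

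This final reduction is where I expect the main obstacle to lie. My starting point is the observation that $T$ lies in the identity component of $Diff^c(\mathbb{R}^2,0)$: the Alexander trick applied to the disk bounded by the twist curve furnishes a compactly supported isotopy from $T$ to the identity that fixes the origin at every time. From here one may either invoke a Mather--Thurston type perfectness theorem for this identity component and be done, or attempt an explicit commutator identity of the form $T = [f_\lambda,\Phi]^{\pm 1}$, where $f_\lambda$ is a compactly supported radial contraction by some $\lambda < 1/2$ and $\Phi$ is an appropriately smoothed infinite product of $f_\lambda$-rescaled twists whose supports accumulate at the origin. The subtle point in the explicit approach is to arrange that $\Phi$ is genuinely $C^\infty$ at $0$, since the naive such product is not differentiable there.
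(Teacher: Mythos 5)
Your reduction is the same as the paper's: the extension $1 \to Diff^c(\mathbb{R}^2\setminus\{0\}) \to Diff^c(\mathbb{R}^2,0) \to \mathcal{G}_p \to 1$, the Sternberg computation $H_1(\mathcal{G}_p)=\mathbb{R}^+$, and the identification $H_1(Diff^c_{\delta}(\mathbb{R}^2\setminus\{0\}))=\mathbb{Z}$, generated by the Dehn twist $T$ about the origin. But the whole content of the theorem is concentrated in the step you leave open, and neither of your two proposed finishes works. First, there is no ``Mather--Thurston perfectness theorem'' available for the identity component of $Diff^c(\mathbb{R}^2,0)$: since the mapping class group of the once-marked disc is trivial, that identity component is all of $Diff^c(\mathbb{R}^2,0)$, whose abelianisation is $\mathbb{R}^+$ -- i.e.\ it is \emph{not} perfect, and asserting anything of this kind is circular, as it presupposes exactly the computation being proved. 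Thurston's theorem applies to $Diff^c_0(\mathbb{R}^2\setminus\{0\})$, and $T$ is precisely the generator of the mapping class group $\mathbb{Z}$ obstructing its use -- this is the very gap in Fukui's argument that the paper is repairing, so you have reproduced the gap rather than closed it. Second, the explicit commutator $T=[f_\lambda,\Phi]^{\pm1}$ with $\Phi$ an infinite product of rescaled twists accumulating at the origin fails in the smooth category, and not merely for delicate reasons: the naive $\Phi$ preserves the circles $|x|=r$ and rotates them by angles that do not converge as $r\to 0$, so it is not even differentiable at $0$; and under conjugation by a contraction the $k$-th derivatives of the rescaled twists grow like $r_n^{-(k-1)}$, so no choice of scales produces a $C^\infty$ (or even $C^2$) germ. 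This ``infinite swindle'' is exactly the classical trick that works in low regularity but breaks for $C^\infty$ diffeomorphisms fixing a point, which is why the statement requires a genuinely different idea.

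The paper's actual mechanism is cohomological and avoids producing any explicit commutator expression for $T$. One introduces the second extension $1 \to Diff^c_0(\mathbb{R}^2\setminus\{0\}) \to Diff^c(\mathbb{R}^2\setminus\{0\}) \to \mathbb{Z} \to 1$ to get $H^1(Diff^c_\delta(\mathbb{R}^2\setminus\{0\}))=\mathbb{Z}$, and then plays off the Euler class: its pullback to $H^2(\mathcal{G}_p)$ is non-zero and primitive (evaluate on a flat $GL^+(2,\mathbb{R})$-bundle of Euler number $1$), while its pullback to $H^2(Diff^c_\delta(\mathbb{R}^2,0))$ vanishes because a flat bundle with compactly supported holonomy is topologically trivial. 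Exactness of the five-term sequence in cohomology then writes $e=\delta(f)$ with $f$ in the invariant part of $\mathbb{Z}$, and primitivity of $e$ forces the connecting homomorphism in \emph{homology} $H_2(\mathcal{G}_p)\to H_1(Diff^c_\delta(\mathbb{R}^2\setminus\{0\}))_{\mathcal{G}_p}$ to be surjective; by exactness the twist class therefore dies in $H_1(Diff^c_\delta(\mathbb{R}^2,0))$, giving $H_1(Diff^c_\delta(\mathbb{R}^2,0))=H_1(\mathcal{G}_p)=\mathbb{R}^+$. If you want to salvage your write-up, you must either import this Euler-class argument or supply a genuinely new proof that $T$ lies in the commutator subgroup of $Diff^c(\mathbb{R}^2,0)$; as it stands, that assertion is exactly what is missing.
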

\begin{proof}
We have the following exact sequences
\begin{equation*}
 1 \to  Diff^c(\mathbb{R}^2 \setminus \{ 0 \}) \to  Diff^c(\mathbb{R}^2, 0) \stackrel{\pi} \rightarrow \mathcal{G}_p \to 1
\end{equation*}
and
\begin{equation*}
 1 \to  Diff_0^c(\mathbb{R}^2 \setminus \{ 0 \}) \to  Diff^c(\mathbb{R}^2 \setminus \{ 0 \}) \rightarrow \mathbb{Z} \to 1.
\end{equation*}
We consider the five-term sequence in cohomology associated to the first exact sequence above
\begin{align*}
0 \to H^1(\mathcal{G}_p)  \to H^1(Diff^c_{\delta}(\mathbb{R}^2, 0)) &\to H^1(Diff^c_{\delta}(\mathbb{R}^2 \setminus \{ 0 \}))^{\mathcal{G}_p}\\
& \stackrel{\delta} \rightarrow  H^2(\mathcal{G}_p) \to H^2(Diff^c_{\delta}(\mathbb{R}^2, 0)).
\end{align*}
By Thurston's result $Diff_0^c(\mathbb{R}^2 \setminus \{ 0 \})$ is perfect and applying the five-term exact sequence to the second exact sequence above implies that $H^1(Diff^c_{\delta}(\mathbb{R}^2 \setminus \{ 0 \})) = \mathbb{Z}$.

Next we consider the sequence of classifying spaces
\[\xymatrix{B Diff^c_{\delta}(\mathbb{R}^2, 0) \ar[r] & B \mathcal{G}_p \ar[r] &  B GL_{\delta}^+(2, \mathbb{R}) \ar[r] & B GL^+(2, \mathbb{R}).}\]
The Euler class is a generator of $H^2(B GL^+(2, \mathbb{R}))$ and the pullback to $H^2(\mathcal{G}_p)$ is non-zero and primitive, as one sees by evaluating this class on a flat $GL^+(2, \mathbb{R})$-bundle with Euler class 1, thought of as an element in $H_2(\mathcal{G}_p)$. Moreover, a flat bundle with holonomy in $ Diff^c(\mathbb{R}^2, 0)$ is topologically trivial, since it admits a section with vanishing self-intersection number. Hence the pullback of $e$ to $H^2(  Diff^c_{\delta}(\mathbb{R}^2, 0))$ is zero. By exactness of the five-term sequence above $e = \delta(f) \text{ for some } f \in H^1(Diff^c_{\delta}(\mathbb{R}^2 \setminus \{ 0 \}))^{\mathcal{G}_p} \subset \mathbb{Z}$. Hence as $e$ is a primitive, non-torsion class, the connecting homomorphism for the five-term exact sequence in \emph{homology} must be surjective. Thus by exactness
\[ H_1(Diff^c_{\delta}(\mathbb{R}^2, 0)) = H_1(\mathcal{G}_p) = \mathbb{R}^+,\]
where the second equality was shown in the proof of Proposition \ref{not_perfect}.
\end{proof}
\section{Closed leaves of flat bundles with symplectic holonomy}\label{symp_flat_leaves}
One may consider flat bundles with additional structure. In the context of surface bundles with horizontal foliations it is natural to consider bundles whose horizontal foliations are transversally symplectic. This is equivalent to the existence of a fibrewise symplectic form that is holonomy invariant, and such a bundle will be called \emph{symplectically flat}. In this way, a flat bundle $\Sigma_h \to E \to B$ with a transversal symplectic structure is equivalent to a holonomy representation $\pi_1(B) \stackrel{\rho} \rightarrow Symp(\Sigma_h, \omega)$, where $\omega$ is the symplectic form restricted to a fibre. We shall for the most part suppress any explicit reference to the symplectic form, since by Moser stability any two symplectic forms on $\Sigma_h$ are equivalent after rescaling.

In order to prove the existence of symplectically flat bundles, we wish to mimic the proof of Proposition \ref{horizontal_leaves2}. In order to do this we note that the analogue of Lemma \ref{exact_sequence} holds in the symplectic case (cf.\ \cite{Bow2}, Proposition 4.2.1). So if $\mathcal{G}^{Symp}_p$ the group of symplectomorphism germs that fix the marked point $p$ and $Symp^c(\Sigma_{h,1})$ consists of symplectomorphisms whose supports are disjoint from $p$, we have the following.
\begin{lem}\label{exact_sequence_Symp}
The following sequence of groups is exact
\[ 1 \to Symp^c(\Sigma_{h,1}) \to Symp(\Sigma_{h,1}) \stackrel{\pi} \rightarrow \mathcal{G}^{Symp}_p \to 1.\]
\end{lem}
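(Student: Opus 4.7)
Exactness at $Symp^c(\Sigma_{h,1})$ is immediate from the injection, and exactness in the middle is tautological since $Symp^c(\Sigma_{h,1})$ is defined to be the set of symplectomorphisms fixing $p$ whose germ at $p$ is trivial, i.e.\ precisely $\ker(\pi)$. The entire content of the lemma is the surjectivity of $\pi$: every symplectic germ at $p$ must be represented by a \emph{globally defined} symplectomorphism of $\Sigma_h$ fixing $p$. The straightforward bump-function truncation used in Lemma \ref{exact_sequence} does not suffice here, since truncating a symplectic vector field in general destroys the symplectic condition. The remedy is to work one level up and truncate a generating Hamiltonian instead.

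The plan is as follows. Given $\phi \in \mathcal{G}^{Symp}_p$, fix a Darboux chart $(U,p)\cong(D_{2\epsilon},0)$ and, after shrinking, choose a representative symplectic embedding $\phi : D_\epsilon \hookrightarrow U$ with $\phi(p)=p$. First construct a symplectic isotopy on $D_\epsilon$ from the identity to $\phi$ in two stages. The rescaling
\[ \phi_s(x) = \tfrac{1}{s}\phi(sx),\qquad s \in (0,1], \]
extends smoothly to $s=0$ by $\phi_0 = D_p\phi$ and satisfies $D\phi_s(x) = D\phi(sx)\in Sp(2,\mathbb{R})$, hence is a symplectic isotopy from the linear part $D_p\phi$ to $\phi$. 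Prepending a path in the connected group $Sp(2,\mathbb{R}) = SL(2,\mathbb{R})$ from the identity to $D_p\phi$ yields a symplectic isotopy $\phi_t$ on $D_\epsilon$ with $\phi_0 = Id$ and $\phi_1 = \phi$. Because $H^1(D_\epsilon)=0$, the generating time-dependent symplectic vector field is Hamiltonian, $X_t = X_{H_t}$ for some $H_t \in C^\infty(D_\epsilon)$.

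Now choose a bump function $\chi:\Sigma_h\to[0,1]$ supported in the Darboux chart and identically $1$ on a smaller disc $D_{\epsilon'}\subset D_\epsilon$. The cut-off Hamiltonian $\chi H_t$ extends by zero to $\Sigma_h$ and generates a globally defined compactly supported symplectic isotopy $\widetilde\phi_t$ of $\Sigma_h$. Since $d\chi$ vanishes on $\{\chi\equiv 1\}$, one has $X_{\chi H_t} = X_{H_t}$ there, so on any trajectory that stays in this region for $t\in[0,1]$ the two flows coincide. After a further shrinking (take $\epsilon$ so small that the trajectories of $\phi_t$ starting in a disc $D_{\epsilon''}\subset D_{\epsilon'}$ remain in $D_{\epsilon'}$ for all $t \in [0,1]$, which is possible by uniform continuity of the flow), we get $\widetilde\phi_1 = \phi_1 = \phi$ on $D_{\epsilon''}$, so $\pi(\widetilde\phi_1)=\phi$ as germs.

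The main technical point, and the only nontrivial ingredient beyond standard Hamiltonian calculus, is this interplay between the size of the disc on which the isotopy $\phi_t$ is defined and the size of the region on which the cut-off Hamiltonian actually reproduces $\phi_t$; this is handled by the successive shrinking above, which is legitimate because we only need agreement with $\phi$ as germs at $p$. Everything else reduces to Darboux's theorem, connectedness of $SL(2,\mathbb{R})$, and the vanishing of $H^1$ of a disc.
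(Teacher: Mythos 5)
Your proof is correct and is essentially the intended argument: the paper offers no proof of this lemma beyond citing \cite{Bow2}, Proposition 4.2.1, and the expected argument is exactly your adaptation of the smooth cut-off trick, namely building a symplectic isotopy from the germ via Alexander rescaling plus connectedness of $Sp(2,\mathbb{R})$ and then truncating the generating Hamiltonian rather than the diffeomorphism. The only imprecision is that the generating Hamiltonian $H_t$ is naturally defined on the image $\phi_t(D_\epsilon)$ rather than on $D_\epsilon$ itself, so the bump function $\chi$ should be supported in a disc $D_r$ contained in $\bigcap_{t\in[0,1]}\phi_t(D_\epsilon)$ (such an $r>0$ exists by compactness of $[0,1]$ and $\phi_t(0)=0$); this is a routine repair that is absorbed by the successive shrinking you already perform, since only the germ at $p$ matters.
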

The other important ingredient is the fact that the compactly supported symplectomorphism group is perfect. The proof of this fact is implicit in the articles \cite{KM},\cite{KM2} and for a detailed proof we refer to (\cite{Bow2}, Lemma 4.2.7).
\begin{lem}\label{Symp_perfect}
The group $Symp^c(\Sigma_h^k)$ is perfect for $h \geq 3$.
\end{lem}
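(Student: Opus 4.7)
My plan is to mirror the argument used for the smooth analogue in Proposition \ref{horizontal_leaves2}, upgrading each ingredient to the symplectic category. Three classical pillars drive the proof: Moser stability (to pass between smooth and symplectic isotopies), perfectness of the mapping class group for $h \geq 3$, and Banyaga's simplicity theorem for Hamiltonian diffeomorphism groups. The flow of the argument is through a five-term exact sequence in group homology.

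First I would use Moser's trick to identify $\pi_0(Symp^c(\Sigma_h^k))$ with the corresponding compactly supported mapping class group $\Gamma$, producing an extension
\[ 1 \to Symp_0^c(\Sigma_h^k) \to Symp^c(\Sigma_h^k) \to \Gamma \to 1. \]
Powell's theorem, combined with Harer-type stability for bounded/punctured surfaces, gives that $\Gamma$ is perfect for $h \geq 3$. The five-term exact sequence in homology then reduces the task to showing that $H_1(Symp_0^c(\Sigma_h^k))$ vanishes in the coinvariants under $\Gamma$.

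To handle the identity component, I would factor $Symp_0^c$ via the flux homomorphism as $1 \to Ham^c \to Symp_0^c \to H^1_c/\Gamma_{\mathrm{flux}} \to 1$. Banyaga's theorem tells us that $Ham^c$ is perfect once the Calabi obstruction is absorbed, and on a surface of genus $h \geq 3$ the Calabi class can be realised by a single commutator via a Mather-type displacement argument: one uses the ample supply of disjointly supported subsurfaces to conjugate a compactly supported Hamiltonian along a chain whose commutator recovers the original element with arbitrary Calabi value. The remaining flux quotient is killed in the $\Gamma$-coinvariants by exploiting the symplectic representation of the mapping class group on $H^1_c(\Sigma_h^k,\mathbb{R})$, whose image contains enough non-trivial elements (coming from Dehn twists along non-separating curves) to eliminate the abelian quotient. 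Assembled via the five-term sequence, this gives perfectness of $Symp^c(\Sigma_h^k)$.

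The main obstacle is the flux-killing step: unlike in the closed case, where $\Gamma_{\mathrm{flux}}$ is a lattice and the quotient is a torus, the presence of boundary or marked points can make $H^1_c$ into a genuine real vector space, and one must verify carefully that every flux class is hit by a commutator, not merely that such commutators generate a subgroup. This requires a precise combinatorial argument mixing Dehn twist identities with explicit symplectic isotopies, in the spirit of Thurston's and Banyaga's original fragmentation arguments, and constitutes the technical core alluded to in \cite{KM, KM2} and carried out in \cite{Bow2}, Lemma 4.2.7.
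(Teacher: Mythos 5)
The paper itself does not prove this lemma (it refers to \cite{KM}, \cite{KM2} and \cite{Bow2}, Lemma 4.2.7), so I can only judge your argument on its own terms. Your overall architecture is the right one: Moser to identify $\pi_0(Symp^c)$ with the compactly supported mapping class group, Powell/Harer for its perfectness when $h \geq 3$, and the five-term sequence reducing everything to the $\Gamma$-coinvariants of $H_1(Symp_0^c)$, analysed via flux and Calabi. But the step where you ``absorb the Calabi obstruction'' is wrong as stated, and it is the crux. The Calabi invariant is a homomorphism $Ham^c \to \mathbb{R}$ which is invariant under conjugation by arbitrary symplectomorphisms (this is exactly the computation appearing in the proofs of Theorem \ref{Tsuboi_five} and Theorem \ref{MMM_Cal}). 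Consequently every commutator of Hamiltonian diffeomorphisms, and more generally every element manufactured by a Mather-type displacement/swindle that conjugates a given Hamiltonian along disjointly supported copies, has Calabi invariant zero: no such argument can realise a nonzero Calabi value by a commutator, and this is precisely why $Ham^c$ of the disc, or of any open surface, fails to be perfect irrespective of genus. The correct mechanism, implicit in \cite{KM}, \cite{KM2}, is the flux--Calabi identity quoted in the paper as formula (\ref{Calabi_prop_1_extended}): choose $\phi,\psi \in Symp_0^c$ supported in a handle disjoint from the boundary whose fluxes $a,b \in H^1_c(\Sigma_h^k,\mathbb{R})$ satisfy $\int a \wedge b \neq 0$; since flux is a homomorphism on $Symp_0^c$, the commutator $[\phi,\psi]$ is Hamiltonian, and by the identity its Calabi invariant is a nonzero multiple of $\int a \wedge b$. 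Together with Banyaga's theorem that $\ker(Cal)$ is perfect, this kills the Calabi contribution already inside $Symp_0^c$ and gives $H_1(Symp_{0,\delta}^c(\Sigma_h^k)) \cong H^1_c(\Sigma_h^k,\mathbb{R})$ via flux. Note that this step needs only $h \geq 1$; the hypothesis $h \geq 3$ is used for the mapping class group, not for Calabi.

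Your closing paragraph also misstates what remains to be checked: one does not need (and cannot have) every flux class hit by a commutator of $Symp_0^c$, since flux itself is a homomorphism there and vanishes on commutators. The five-term sequence only requires the vanishing of the coinvariants $\bigl(H^1_c(\Sigma_h^k,\mathbb{R})\bigr)_{\Gamma}$, and this is where a small but genuine check is needed when $k \geq 2$: the peripheral classes are fixed by the whole compactly supported mapping class group, so they are not obviously killed by the symplectic representation alone. They do die, because both $a_1$ and $a_1 + c_j$ (with $c_j$ a peripheral class) are represented by non-separating simple closed curves, so transvections along these curves place both classes, and hence their difference $c_j$, in the span of $\{g v - v\}$. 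With that correction, and with the Calabi step repaired as above, your outline assembles into a proof along the lines of \cite{Bow2}, Lemma 4.2.7.
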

\noindent With these facts the proof of the following is entirely analogous to the smooth case and the details are left to the reader.
\begin{prop}\label{symp_leaves_construction}
For $h \geq 3$, there exist flat bundles $\Sigma_h \to E \to \Sigma_g$ that have symplectic holonomy and whose horizontal foliations have arbitrarily many closed leaves $S_k$ of prescribed self-intersection $[S_k]^2 = m_k \leq h -1$.
In particular, if $m_k = 0$ we may assume that the horizontal foliation in some neighbourhood $\nu_k$ of $S_k$ is given by the kernel of a projection $\nu_k = \Sigma_h \times D^2 \to D^2$.
\end{prop}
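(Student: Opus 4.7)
The strategy is to mimic the construction of Proposition \ref{horizontal_leaves2} step by step, substituting each smooth ingredient by its symplectic analogue. The three key inputs that make this transcription possible are Milnor's realisation of flat $Sp(2,\mathbb{R}) = SL(2,\mathbb{R})$-bundles with prescribed Euler number over a surface of sufficiently large base genus, the symplectic exact sequence in Lemma \ref{exact_sequence_Symp}, and the perfectness of the compactly supported symplectomorphism group supplied by Lemma \ref{Symp_perfect}.

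For each marked point $p_k \in \Sigma_h$ I would first use Darboux coordinates to identify $Sp(T_{p_k}\Sigma_h, \omega_{p_k})$ with $SL(2,\mathbb{R})$ and then, via Milnor, produce a flat $SL(2,\mathbb{R})$-bundle over a high-genus surface realising Euler number $m_k$. Its holonomy matrices $A_i^{(k)}, B_i^{(k)}$ define linear symplectomorphism germs at $p_k$, and Lemma \ref{exact_sequence_Symp} lifts them to symplectomorphisms $\phi_i^{(k)}, \psi_i^{(k)} \in Symp(\Sigma_h, \omega)$ which agree with $A_i^{(k)}, B_i^{(k)}$ on a small Darboux neighbourhood of $p_k$ and are supported away from all other marked points. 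The product of commutators $\eta_k = \prod_i [\phi_i^{(k)}, \psi_i^{(k)}]$ then lies in $Symp^c(\Sigma_h \setminus \{p_1, \ldots, p_N\})$, which is perfect by Lemma \ref{Symp_perfect}, so $\eta_k^{-1}$ can be written as a further product of commutators supported off every $p_j$. Assembling all these generators and commutators as the holonomy of a flat symplectic bundle over a sufficiently stabilised base produces, exactly as at the end of Proposition \ref{horizontal_leaves2}, a horizontal foliation with the desired closed leaves $S_k$ of prescribed normal Euler number $m_k$.

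The one piece of bookkeeping that demands attention is ensuring that, with several marked points present, the supports of the various symplectomorphisms can be arranged to be mutually disjoint; this I would handle by shrinking the Darboux neighbourhoods, using that $\Sigma_h$ has ample room when $h \geq 3$. The main potential obstacle I anticipated was importing the perfectness of the compactly supported diffeomorphism group into the symplectic category, but this is precisely what Lemma \ref{Symp_perfect} delivers, so the argument is in essence a transcription of the smooth case.

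For the final ``in particular'' clause with $m_k = 0$, the flat normal bundle is trivial and its linear holonomy is the identity, so I would choose the lifts $\phi_i^{(k)}, \psi_i^{(k)}$ to equal the identity on a Darboux neighbourhood $U$ of $p_k$. The horizontal foliation restricted to a fibrewise tubular neighbourhood $\nu_k \cong \Sigma_h \times D^2$ then coincides with the kernel of the projection onto $D^2$, as required.
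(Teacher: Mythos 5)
Your proposal is correct and is essentially the paper's own route: the paper explicitly leaves the proof as "entirely analogous to the smooth case", i.e.\ repeat the construction of Proposition \ref{horizontal_leaves2} with the symplectic exact sequence of Lemma \ref{exact_sequence_Symp} replacing Lemma \ref{exact_sequence} and the perfectness of $Symp^c$ (Lemma \ref{Symp_perfect}) replacing Thurston's theorem, which is exactly what you do, including choosing identity germs near $p_k$ when $m_k=0$ to get the product foliation. The support-control bookkeeping you flag (realising the linear symplectic germs by symplectomorphisms supported in small disjoint Darboux discs, via a Moser correction) is precisely the kind of detail the paper leaves to the reader, so there is no substantive divergence.
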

In \cite{KM} the authors introduced the notion of a \emph{symplectic pair} (cf.\ also \cite{BK}). In the case of four-manifolds, a symplectic pair consists of a pair of complementary, two-dimensional transversally symplectic foliations. As a consequence of Proposition \ref{symp_leaves_construction} we may now give examples of manifolds with symplectic pairs, both of whose foliations have closed leaves of non-zero self-intersection.
\begin{cor}\label{Pairs_no_triv_leaves}
There exist 4-manifolds that admit symplectic pairs $(\omega_1, \omega_2)$ both of whose foliations $\mathcal{F}_1, \mathcal{F}_2$ have closed leaves $L_1, L_2$ with $[L_i]^2 \neq 0$.
\end{cor}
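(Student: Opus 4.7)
The plan is to obtain the required $4$-manifold as a symplectic normal connected sum of two copies of a bundle supplied by Proposition \ref{symp_leaves_construction}, arranged so that the horizontal/vertical roles are interchanged on the two sides. To start, I would observe that any symplectically flat bundle $\Sigma_h\to E\to\Sigma_g$ carries a canonical symplectic pair $(\omega_V,\omega_H)$, where $\omega_V$ extends the fibrewise symplectic form horizontally and $\omega_H$ is the pull-back of an area form on $\Sigma_g$; the associated foliations are the vertical foliation $\mathcal{F}^V$ (by the fibres) and the horizontal foliation $\mathcal{F}^H$. Using Proposition \ref{symp_leaves_construction} I would arrange $E$ to contain simultaneously a horizontal leaf $L$ with $[L]^2\neq 0$ and a horizontal leaf $L^{(0)}$ with $[L^{(0)}]^2=0$ whose tubular neighbourhood is the product $\Sigma_h\times D^2$ with horizontal foliation given by $\Sigma_h\times\{\mathrm{pt}\}$.

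A tubular neighbourhood of a fibre $F\subset E$ admits the same product model $\Sigma_h\times D^2$, but now with $\mathcal{F}^V$ corresponding to the first factor and $\mathcal{F}^H$ to the second; the roles of the two foliations are thus exchanged compared to a neighbourhood of $L^{(0)}$. I would then take two copies $E_1,E_2$ and perform the symplectic Gompf sum $X=E_1\#_{F,L^{(0)}}E_2$ along a fibre $F\subset E_1$ and the zero-intersection leaf $L^{(0)}\subset E_2$. Both are symplectic surfaces with trivial normal bundle, so the sum exists. The gluing diffeomorphism of $\partial\nu_F=\Sigma_h\times S^1=\partial\nu_{L^{(0)}}$ can be chosen to identify the $\Sigma_h$-factors and the $S^1$-factors, and by the role-swap above it matches the trace of $\mathcal{F}^H_{E_1}$ on the collar with the trace of $\mathcal{F}^V_{E_2}$, and vice versa. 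The two local symplectic pairs therefore glue to a global pair $(\omega_1,\omega_2)$ on $X$ whose foliations are
\[\mathcal{F}_1=\mathcal{F}^H_{E_1}\cup\mathcal{F}^V_{E_2},\qquad \mathcal{F}_2=\mathcal{F}^V_{E_1}\cup\mathcal{F}^H_{E_2}.\]

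Since the surgery is performed away from the leaves $L_1\subset E_1$ and $L_2\subset E_2$ of non-zero self-intersection, both survive in $X$: $L_1$ as a closed leaf of $\mathcal{F}_1$ with $[L_1]^2\neq 0$, and $L_2$ as a closed leaf of $\mathcal{F}_2$ with $[L_2]^2\neq 0$. The main technical point is checking that the two local product symplectic-pair structures on the collars actually assemble into a global symplectic pair on $X$; this is a routine symplectic cut-and-paste argument in the spirit of \cite{BK} and the Gompf sum, made particularly transparent here by the fact that on each collar the pair is literally the product pair $(\omega_{\Sigma_h},\omega_{D^2})$, with the two factors swapped by the gluing map.
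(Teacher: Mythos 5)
Your overall strategy coincides with the paper's: take symplectically flat bundles supplied by Proposition \ref{symp_leaves_construction}, and form the normal connected sum of a fibre of one with the square-zero horizontal leaf of the other, so that the horizontal and vertical foliations swap roles across the gluing and the resulting manifold carries a symplectic pair. However, two steps as you have written them fail. First, the identification $\partial\nu_F=\Sigma_h\times S^1=\partial\nu_{L^{(0)}}$ is not available if $E_1$ and $E_2$ are two copies of the same bundle: a closed leaf of a horizontal foliation projects to the base as a covering, and the leaves produced by Proposition \ref{symp_leaves_construction} are sections, so $L^{(0)}$ is diffeomorphic to the \emph{base} of $E_2$, not to its fibre, and in the construction the base genus is in general much larger than the fibre genus. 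The Gompf sum along $F\subset E_1$ and $L^{(0)}\subset E_2$ therefore requires matching the fibre genus of $E_1$ with the base genus of $E_2$; the paper arranges this by choosing $E_1$ with fibre of arbitrarily large genus and stabilising $E_2$ until its base genus equals $g(F_1)$.

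Second, the surgery is \emph{not} performed away from $L_1$: since $L_1$ is a closed leaf of the horizontal foliation of $E_1$, it meets every fibre of $E_1$, in particular the gluing fibre $F$, so it does not survive unchanged in $X$. What one gets instead (and this is exactly the paper's point) is the internal connected sum $\sigma_1=L_1\,\#\,F_2$ with a fibre $F_2$ of $E_2$ passing through $L^{(0)}$; because the vertical foliation of $E_2$ matches the horizontal foliation of $E_1$ across the gluing, $\sigma_1$ is a closed leaf of $\mathcal{F}_1$, and $[\sigma_1]^2=[L_1]^2\neq 0$ since $[F_2]^2=0$. Your claim about $L_2$ is fine, since distinct closed horizontal leaves of $E_2$ are disjoint and the sum is taken along a small neighbourhood of $L^{(0)}$. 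With these two corrections --- genus matching by stabilisation, and capping $L_1$ off by a fibre of the other side --- your argument becomes the proof given in the paper.
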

\begin{proof}
We let $E_1$ be a flat symplectic bundle with a section $s_1$ such that $[s_1]^2 \neq 0$. We further let $E_2$ be a flat symplectic bundle with two sections $s_2,t_2$, the first of which has non-trivial self-intersection and the second of which has a neighbourhood on which the horizontal foliation is given by projection to a disc. The existence of these bundles is guaranteed by Proposition \ref{symp_leaves_construction}. By a suitable choice of $E_1$ we may assume that the genus of its fibre $g(F_1)$ to be arbitrarily large. After stabilisation of $E_2$, we may also assume that the genus of the base of $E_2$ is $g(F_1)$. As noted in \cite{BK}, the assumptions on the foliation in a neighbourhood of $t_2$ imply that the normal connected sum
\[E_1 \#_{F_1 = t_2} E_2\]
admits a symplectic pair, whose foliations we denote by $\widetilde{\mathcal{F}}_i$. Then by construction the connect sums
\[\sigma_1 = s_1 \# F_2\]
\[\sigma_2 = s_2 \# F_1\]
are leaves of $\widetilde{\mathcal{F}}_i$ and $[\sigma_i]^2 = [s_i]^2 \neq 0$.
\end{proof}
For smooth diffeomorphisms we computed the Abelianisation of $Diff^+(\Sigma_{h,k})$. It is then natural to try to determine the Abelianisation of $Symp(\Sigma_{h,k})$, however one cannot mimic the proof of Proposition \ref{not_perfect} used above. The first step is still valid and thus one has that $H_1(Symp_{\delta}(\Sigma_{h,1})) = H_1(\mathcal{G}^{Symp}_p)$. There is also a version of the Sternberg Linearisation Theorem for symplectic germs, but the normal form that it yields is not linear (cf.\ \cite{Ster2}), and thus the computation of the Abelianisation of $Symp(\Sigma_{h,1})$ remains an open question.

\subsection*{The case of genus $0$:}\label{case_low_genus}
So far the results that we have obtained have been for bundles whose fibre $\Sigma_h$ has been of genus at least 3. We shall now consider the case of genus $0$, where one can give a fairly precise description of the possible compact leaves of a (symplectically) flat bundle.

Examples of sphere bundles with horizontal foliations that have closed leaves of arbitrary self-intersection have been given by Mitsumatsu (cf.\ \cite{Mit}). We shall summarise his construction here. Let $\mathbb{R}^2 \to \xi_k \to \Sigma_g$ be a flat bundle of Euler class $k \leq g -1$ as given by \cite{Mil}. Then the sphere bundle $ E_k = S(\xi_k \oplus \mathbb{R})$ is flat and has two sections $L_{\pm}$ corresponding to the north and south poles of the fibre and $[L_{\pm}]^2 = \pm k$.

We would of course like to have similar examples for flat bundles with symplectic holonomy. The flat structures that one obtains via the construction of Mitsumatsu cannot have symplectic holonomy. For if so, then one would have a vertical symplectic form $\omega_v$ that is positive on each fibre, i.e. $\omega_v([F]) \neq 0$, and vanishes identically on the leaves of the horizontal foliation. But the set $\{L_{-}, L_{+}\}$ generates $H_2(E_k, \mathbb{R})$, which is a contradiction. It is not difficult to adapt the argument of Proposition \ref{symp_leaves_construction} to produce horizontal foliations of sphere bundles with symplectic holonomy (cf.\ \cite{Bow2}, p.\ 76). Interestingly, this gives foliations with a unique compact leaf. For if $L'$ were any other leaf then $\{L , L' \}$ would generate $H_2(E_k, \mathbb{R})$ and this would contradict the existence of a vertical symplectic form.

\section{Filling flat $S^1$-bundles}\label{fill_poss}
Given a flat $S^1$-bundle, we would like to know when it bounds a flat surface bundle. To answer this question in full generality is a subtle matter. However for bundles over compact surfaces this can always be achieved after stabilisation.
\begin{prop}\label{boundary_Euler_class}
Let $h \geq 3$ or $h = 0$ and let $M$ be a flat $S^1$-bundle. Then there is a flat bundle $\Sigma_h^1 \to E \to \Sigma_g$, whose boundary is a stabilisation of $M$. In particular, there exist flat $\Sigma_h^1$-bundles whose boundaries have non-trivial Euler class.
\end{prop}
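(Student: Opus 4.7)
The plan is to mirror the stabilisation construction from Theorem~\ref{leaf_stabilisation}, now carried out relative to a collar of $\partial \Sigma_h^1 = S^1$. Let $\psi : \pi_1(\Sigma_g) \to \mathrm{Diff}^+(S^1)$ denote the holonomy of $M$ and write $\alpha_i = \psi(a_i)$, $\beta_i = \psi(b_i)$ for standard generators $a_i, b_i$. I would first extend each $\alpha_i, \beta_i$ to diffeomorphisms $\phi_i, \eta_i \in \mathrm{Diff}^+(\Sigma_h^1)$ using a collar $S^1 \times [0,\epsilon)$: set them equal to $\alpha_i \times \mathrm{id}$, $\beta_i \times \mathrm{id}$ on an inner collar and cut off to the identity further in, which is the boundary analogue of the cut-off argument used in Lemma~\ref{exact_sequence}. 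Since $\psi$ is a homomorphism, the product of commutators
\[
\gamma = \prod_{i=1}^g [\phi_i,\eta_i]
\]
reduces to $\prod[\alpha_i,\beta_i] \times \mathrm{id} = \mathrm{id}$ on the inner collar, so $\gamma \in \mathrm{Diff}^c(\Sigma_h^1)$.

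The heart of the argument is to express $\gamma^{-1}$ as a product of commutators in $\mathrm{Diff}^c(\Sigma_h^1)$. For $h \geq 3$, Harer stability and Powell's theorem together give that $\Gamma_h^1$ is perfect (exactly as invoked in Proposition~\ref{horizontal_leaves}), so $[\gamma]^{-1} = \prod_{j=1}^N [\bar\alpha_j,\bar\beta_j]$ in $\Gamma_h^1$. Choosing representatives $\alpha_j,\beta_j \in \mathrm{Diff}^c(\Sigma_h^1)$, the residue $\gamma^{-1} \cdot \prod[\alpha_j,\beta_j]^{-1}$ has trivial mapping class and therefore lies in $\mathrm{Diff}_0^c(\Sigma_h^1)$, which is perfect by Thurston; a further factorisation $\prod_{l=1}^M [\gamma_l, \delta_l]$ completes the decomposition
\[
\gamma^{-1} = \prod_{j=1}^N [\alpha_j, \beta_j] \cdot \prod_{l=1}^M [\gamma_l, \delta_l].
\]
For $h=0$ the mapping class group $\Gamma_0^1$ is trivial by Alexander's trick, so $\gamma$ lies in $\mathrm{Diff}_0^c(D^2)$ directly and Thurston alone suffices.

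Next I would assemble the filling: define $\bar\rho: \pi_1(\Sigma_{g+N+M}) \to \mathrm{Diff}^+(\Sigma_h^1)$ on standard generators by $a_i \mapsto \phi_i$, $b_i \mapsto \eta_i$ for $1 \le i \le g$, then $a_{g+j} \mapsto \alpha_j$, $b_{g+j} \mapsto \beta_j$ for $1 \le j \le N$, and finally $a_{g+N+l} \mapsto \gamma_l$, $b_{g+N+l} \mapsto \delta_l$ for $1 \le l \le M$. The surface relation is satisfied since the product of all commutators is $\gamma \cdot \gamma^{-1} = 1$, so $\bar\rho$ defines a flat bundle $E \to \Sigma_{g+N+M}$ with fibre $\Sigma_h^1$. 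Because $\alpha_j, \beta_j, \gamma_l, \delta_l$ all have support in the interior of $\Sigma_h^1$, the boundary holonomy agrees with $\psi$ on the first $g$ pairs of generators and is trivial on the remaining $N+M$ pairs; hence $\partial E$ is exactly the stabilisation of $M$ by the trivial $S^1$-bundle $S^1 \times \Sigma_{N+M}$. The final claim then follows by applying the construction to any of Milnor's flat $S^1$-bundles of non-zero Euler class, since fibre sum with a trivial bundle preserves the Euler class.

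The main obstacle is the factorisation of $\gamma^{-1}$ into commutators in $\mathrm{Diff}^c(\Sigma_h^1)$; this is precisely what forces the genus of the base to grow from $g$ to $g+N+M$, and it rests on exactly the perfectness results for $\Gamma_h^1$ (via Harer and Powell) and $\mathrm{Diff}_0^c(\Sigma_h^1)$ (via Thurston) that are already used elsewhere in the paper. The extension of the boundary holonomy across a collar and the identification of $\partial E$ as a stabilisation are routine once the factorisation is in hand.
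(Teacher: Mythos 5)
Your proposal is correct and follows essentially the same route as the paper: extend the boundary holonomies over a collar so that the product of commutators becomes compactly supported in the interior, kill it by further commutators, and read off the boundary as a stabilisation of $M$, with Milnor's examples giving the non-trivial Euler class. The only cosmetic difference is that the paper invokes the perfectness of $Diff^c(\Sigma_h^1)$ (for $h\geq 3$, established in Proposition \ref{horizontal_leaves}; for $h=0$, Thurston) in one step, whereas you unwind it into the two-stage factorisation through $\Gamma_h^1$ and $Diff_0^c(\Sigma_h^1)$ as in Theorem \ref{leaf_stabilisation} -- the ingredients (Harer, Powell, Thurston) are identical, and your residue only needs its commutator factors reordered so that the total product of commutators is the identity in the prescribed order.
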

\begin{proof}
Let $a_i, b_i \in \pi_1(\Sigma_g)$ denote the standard generators of the fundamental group and let $ \phi_i = \rho(a_i)$ and $\psi_i = \rho(b_i) $ be the images of these generators in $Diff_0(S^1)$ under the monodromy homomorphism. Since $\phi_i, \psi_i$ are isotopic to the identity, we may extend them to diffeomorphisms $\bar{\phi}_i, \bar{\psi}_i$ on a collar of the boundary $[0, 1] \times S^1$ in such a way that
\[\bar{\phi}_i(t,x) = (t,\phi_i(x)) \text{ , } \bar{\psi}_i(t,x) = (t,\psi_i(x)) \text{ for } 0 \leq t <  \epsilon\]
and
\[\bar{\phi}_i(t,x) = \bar{\psi}_i(t,x) = Id\text{ for } 1 - \epsilon <  t \leq 1.\]
We then extend by the identity to obtain $\bar{\phi}_i, \bar{\psi}_i \in Diff^+(\Sigma_h^1)$ such that $\eta = \prod_{i = 1}^{g} [\bar{\phi}_i, \bar{\psi}_i]$ lies in $Diff^c(\Sigma_h^1)$. 

For $h \geq 3$ the group $Diff^c(\Sigma_h^1)$ is perfect (cf.\ Proposition \ref{horizontal_leaves}) and for $h = 0$ this is the classical result of Thurston (cf.\ \cite{Th2}). Thus we may write $\eta^{-1} = \prod_{i = 1}^{g'} [\alpha_i, \beta_i]$, where $\alpha_i, \beta_i \in  Diff^c(\Sigma_h^1)$. We define a flat bundle $E$ over $\Sigma_{g + g'}$ by the holonomy representation
\[a_i \mapsto \bar{\phi}_i, \ \ b_i \mapsto \bar{\psi}_i \text{ for } 1 \leq i \leq g\]
\[a_{g +j} \mapsto \alpha_j, \ \ b_{g +j} \mapsto \beta_j \text{ for } 1 \leq j \leq g'.\]
The boundary of $E$ is a flat $S^1$-bundle and by construction it is a stabilisation of $M$ as required.

For the second statement we note that there exist flat $GL^+(2,\mathbb{R})$-bundles with non-trivial Euler classes, with explicit examples described in \cite{Mil}. The associated projective space bundles are then flat $S^1$-bundles with non-trivial Euler classes and these can then be filled by flat disc bundles after stabilisation, which does not affect the Euler class.
\end{proof}
Proposition \ref{boundary_Euler_class} implies that any flat circle bundle can be filled in by a flat disc bundle after a suitable stabilisation. On the other hand, if we require that the bundle have symplectic holonomy, then this is no longer true (cf.\ Theorem \ref{Tsuboi_int}). However, if the fibre has genus $h \geq 3$, then one can indeed find a filling by a symplectically flat bundle after a suitable stabilisation. To this end we shall need an analogue of the extension trick of Proposition \ref{boundary_Euler_class} in the symplectic case.
\begin{prop}\label{flat_extension}
Let $\pi_1(\Sigma_g) \stackrel{\rho} \rightarrow Diff_0(S^1)$ be a flat structure on an $S^1$-bundle $M$ and let $\phi_i, \psi_i$ denote $\rho(a_i), \rho(b_i)$ respectively. Then there are symplectic extensions $\tilde{\phi}_i, \tilde{\psi}_i$ on the annulus $A = S^1 \times [0, 1]$ that are the identity in a neighbourhood of $S^1 \times \{ 1 \}$ such that $\prod_i^g [\tilde{\phi}_i, \tilde{\psi}_i]$ has support in the interior of $A$.
\end{prop}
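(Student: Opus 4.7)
The plan is to adapt the smooth extension trick of Proposition \ref{boundary_Euler_class}. There each $\phi \in Diff_0(S^1)$ was extended to a collar of $S^1 \times \{0\}$ by the product formula $(x,t)\mapsto(\phi(x),t)$, and the surface-group relation $\prod [\phi_i,\psi_i]=\mathrm{Id}$ forced the commutator product to be trivial near the boundary. The product formula is not area-preserving unless $\phi$ is a rotation, so one seeks a symplectic replacement with the same functorial property. With $\omega = dx\wedge dt$ on $A = S^1\times[0,1]$, the \emph{canonical lift}
\[C_\phi(x,t) = \Bigl(\phi(x),\; t/\phi'(x)\Bigr)\]
is a germ of symplectomorphism at $S^1\times\{0\}$: a direct Jacobian computation gives $C_\phi^*\omega = \omega$, and the chain rule $(\phi\circ\psi)'(x) = \phi'(\psi(x))\,\psi'(x)$ yields $C_\phi\circ C_\psi = C_{\phi\psi}$, so $\phi\mapsto C_\phi$ is a group homomorphism into germs of symplectomorphisms.

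For each $i$ I choose isotopies $\phi_{i,s},\psi_{i,s}$ from the identity to $\phi_i,\psi_i$, with generating vector fields $f_{i,s}(x)\partial_x$ and $g_{i,s}(x)\partial_x$. Fix a cutoff $\alpha:[0,1]\to[0,1]$ with $\alpha\equiv 1$ on a neighbourhood of $0$ and $\alpha\equiv 0$ on a neighbourhood of $1$, and let $\tilde\phi_i$ be the time-$1$ flow of the time-dependent Hamiltonian $H^\phi_{i,s}(x,t) = t\,f_{i,s}(x)\,\alpha(t)$, with $\tilde\psi_i$ defined analogously. Since $H^\phi_{i,s}$ vanishes on a collar of the top boundary, $\tilde\phi_i$ and $\tilde\psi_i$ are the identity near $S^1\times\{1\}$. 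On the region where Hamiltonian orbits stay inside $\{\alpha\equiv 1\}$, explicit integration of $\dot x = f_{i,s}(x),\;\dot t = -t f_{i,s}'(x)$ yields $x(1) = \phi_i(x_0)$ and $t(1) = t_0/\phi_i'(x_0)$, so on a sub-collar of $S^1\times\{0\}$ the extension $\tilde\phi_i$ agrees on the nose with $C_{\phi_i}$ (and similarly for $\tilde\psi_i$).

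To conclude, shrink this sub-collar once more so that every iterated composition appearing in the commutator product $\prod_i[\tilde\phi_i,\tilde\psi_i]$ keeps its $t$-coordinate inside the canonical-form region; this requires only a uniform shrinkage, since each $C_\phi^{\pm1}$ rescales $t$ by a factor bounded uniformly in terms of $\inf\phi'$ and only finitely many maps are composed. On this smaller collar the homomorphism property of $C$ and the surface-group relation $\prod_i[\phi_i,\psi_i] = \mathrm{Id}$ in $Diff_0(S^1)$ satisfied by $\rho$ combine to give
\[\prod_i[\tilde\phi_i,\tilde\psi_i] \;=\; C_{\prod_i[\phi_i,\psi_i]} \;=\; C_{\mathrm{Id}} \;=\; \mathrm{Id},\]
so the commutator product is the identity on a full neighbourhood of $S^1\times\{0\}$. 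Combined with the identity behaviour near $S^1\times\{1\}$, this shows $\prod_i[\tilde\phi_i,\tilde\psi_i]$ has support in the interior of $A$. The conceptual heart of the argument is finding the canonical lift $C_\phi$: the factor $1/\phi'(x)$ is forced by $\omega$-preservation and, by the chain rule, fortuitously makes $\phi\mapsto C_\phi$ a group homomorphism; after that, the proof is a direct symplectic translation of the smooth argument, with only mild bookkeeping to keep iterated compositions inside the canonical collar.
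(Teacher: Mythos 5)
Your proof is correct, and it takes a genuinely different route from the paper's. The paper builds an interpolating closed $2$-form $\omega = dt\wedge\alpha + \phi(t)\,d\alpha$ on $M\times[0,1]$ (where $\alpha$ defines the horizontal foliation of $M$), takes the holonomies of the kernel distribution along embedded generators, modifies them to be the identity near $S^1\times\{1\}$, and then must run a Moser isotopy with interior support because the resulting maps are only symplectic near the two ends; triviality of the commutator product near $S^1\times\{0\}$ comes from integrability of the distribution there, so that holonomy along the nullhomotopic product of commutators has trivial germ. You instead work with the explicit cotangent lift $C_\phi(x,t)=(\phi(x),t/\phi'(x))$, which preserves the Liouville form $t\,dx$ and is a group homomorphism, and you cut it off by an explicit time-dependent Hamiltonian $t\,f_{i,s}(x)\,\alpha(t)$; the variational equation gives agreement with $C_{\phi_i}$ on a collar, and the homomorphism property plus the relation $\prod_i[\phi_i,\psi_i]=\mathrm{Id}$ (after your uniform-shrinkage bookkeeping to keep iterated compositions in the canonical collar, which is the one point that genuinely needed care and which you handle) kills the commutator product near $S^1\times\{0\}$. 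What your approach buys: the extensions are Hamiltonian flows, hence exactly symplectic from the outset, so no Moser correction is needed, and you even get extensions in the Hamiltonian group, which is in the spirit of Lemma \ref{extend_volume} used later. What the paper's approach buys: it works intrinsically with the defining $1$-form $\alpha$ of the flat structure, requires no choice of isotopies or explicit coordinates on the fibre, and the interpolating-distribution idea is the conceptual template reused elsewhere; in effect the holonomy of $\ker d(t\alpha)$ near $t=0$ is your cotangent lift in disguise, so the two arguments agree on the germ at the boundary but differ entirely in how they produce globally defined symplectic extensions.
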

\begin{proof}
Let $\mathcal{F}$ be the horizontal foliation given by the flat structure on $M$ and let $\alpha \in \Omega^1(M)$ be a defining 1-form for $\mathcal{F}$. We choose a function $\phi$ on $[0, 1]$, which is equal to $t$ on a neighbourhood of $0$ and is identically zero for all $t$ in a neighbourhood of $1$. Set $\omega = dt \wedge \alpha + \phi(t) d \alpha$ on $E = M \times [0, 1]$ and let $\frac{\partial}{\partial \theta}$ denote a vector field that is tangent to the fibres of $M$. Then
\[\omega(\frac{\partial}{\partial t},\frac{\partial}{\partial \theta}) = \alpha(\frac{\partial}{\partial \theta}) \neq 0,\]
since $\mathcal{F}$ is transverse to the fibres of $M$ and, thus, $\omega$ is a nowhere vanishing 2-form on $E$. Furthermore, since $\mathcal{F}$ is a foliation we compute:
\begin{align*}
\omega^2 &= (dt \wedge \alpha + \phi(t)d \alpha)^2\\
& = 2\phi(t) dt \wedge \alpha \wedge d \alpha = 0.
\end{align*}
Thus $\mathcal{F}_{\omega} = Ker(\omega)$ is a well-defined distribution that is transverse to the (annular) fibres of $E \to \Sigma_g$. Moreover, since $\omega = d(t \alpha)$ in a neighbourhood of $M \times \{ 0 \}$ this distribution is integrable and transversally symplectic on this neighbourhood, and restricts to $\mathcal{F}$ on $M \times \{ 0 \}$. On a neighbourhood of $M \times \{ 1 \}$ the form $\omega$ reduces to $dt \wedge \alpha$ and again the kernel distribution is integrable and agrees with $\mathcal{F}$ on this neighbourhood.

We choose a base point $x_0 \in \Sigma_g$ and embedded representatives $a_i,b_i$ for the standard generators of $\pi_1(\Sigma_g, x_0)$ and let $\bar{\phi}_i, \bar{\psi}_i$ be the holonomies of the curves $a_i,b_i$ given by the distribution $\mathcal{F}_{\omega}$. Then on $S^1 \times \{ 0 \}$ and near $S^1 \times \{ 1 \}$ these diffeomorphisms are given by $\phi_i \times Id$ and $\psi_i \times Id$ respectively, where $\phi_i, \psi_i$ are the images of the standard basis under the holonomy representation of $M$. Since $\phi_i, \psi_i$ lie in $Diff_0(S^1)$, we may alter the maps $\bar{\phi}_i, \bar{\psi}_i$ near $S^1 \times \{ 1 \}$ so that they restrict to the identity in a neighbourhood of $S^1 \times \{ 1 \}$. We shall continue to denote these altered maps by $\bar{\phi}_i, \bar{\psi}_i$.

We let $\Omega$ be the restriction of $\omega$ to the annular fibre over $x_0$. Then since the holonomies $\bar{\phi}_i, \bar{\psi}_i$ have support in $S^1 \times [0,1)$ and the distribution defining them was transversally symplectic in a neighbourhood of $M \times \{ 0 \}$, the forms $\bar{\phi}^*_i \Omega - \Omega$ and $\bar{\psi}^*_i \Omega - \Omega$ have compact support in the interior of $S^1 \times (0,1)$. Moreover
\[\int_A (\bar{\phi}^*_i \Omega - \Omega) = \int_A (\bar{\psi}^*_i \Omega - \Omega) = 0\]
so that the forms $\bar{\phi}^*_i \Omega$ and $\bar{\psi}^*_i \Omega$ are cohomologous to $\Omega$ in compactly supported cohomology. By applying a Moser isotopy, which will have support in the interior of $S^1 \times [0,1]$, we obtain symplectomorphisms $\tilde{\phi_i}, \tilde{\psi}_i$ that are symplectic extensions of $\phi_i, \psi_i$ respectively, and by construction $\prod_i^g [\tilde{\phi}_i, \tilde{\psi}_i]$ has support in the interior of $A$.
\end{proof}
Proposition \ref{flat_extension} is the main step in extending flat structures symplectically and the following result follows from this and the perfectness of $Symp^c(\Sigma_h^1)$.
\begin{thm}\label{symp_flat_extension}
Let $M$ be a flat $S^1$-bundle and assume that $h \geq 3$. Then some stabilisation of $M$ bounds a flat $\Sigma_h^1$-bundle with symplectic holonomy.
\end{thm}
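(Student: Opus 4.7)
The plan is to adapt the argument of Proposition \ref{boundary_Euler_class} to the symplectic category by combining Proposition \ref{flat_extension} with Lemma \ref{Symp_perfect}. The skeleton is identical: produce holonomies on the fibre $\Sigma_h^1$ whose boundary behaviour reproduces $\rho$, then use perfectness to kill the resulting commutator defect.

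First, I would apply Proposition \ref{flat_extension} to $\rho$ to obtain symplectic extensions $\tilde{\phi}_i, \tilde{\psi}_i$ of $\phi_i = \rho(a_i), \psi_i = \rho(b_i)$ on the annulus $A = S^1 \times [0,1]$ (with the symplectic form $\Omega$ constructed in that proof), each equal to the identity in a neighbourhood of $S^1 \times \{1\}$, and such that $\eta = \prod_{i=1}^{g}[\tilde{\phi}_i, \tilde{\psi}_i]$ has support in the interior of $A$. I would then identify $A$ with a collar of $\partial \Sigma_h^1$ inside $\Sigma_h^1$ and extend $\Omega$ across the capping region $\Sigma_h^1 \setminus A$ to a symplectic form $\omega_h$ on $\Sigma_h^1$. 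Because the $\tilde{\phi}_i, \tilde{\psi}_i$ are trivial near $S^1 \times \{1\}$, extending them by the identity yields symplectomorphisms $\bar{\phi}_i, \bar{\psi}_i$ of $(\Sigma_h^1, \omega_h)$, and their commutator product $\bar{\eta} = \prod_{i=1}^{g}[\bar{\phi}_i, \bar{\psi}_i]$ lies in $Symp^c(\Sigma_h^1)$.

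Invoking Lemma \ref{Symp_perfect} (where the hypothesis $h \geq 3$ is used), I would write $\bar{\eta}^{-1} = \prod_{j=1}^{g'}[\alpha_j, \beta_j]$ with $\alpha_j, \beta_j \in Symp^c(\Sigma_h^1)$, and define the holonomy of a flat bundle over $\Sigma_{g+g'}$ by sending the first $g$ pairs of standard generators to $(\bar{\phi}_i, \bar{\psi}_i)$ and the remaining $g'$ pairs to $(\alpha_j, \beta_j)$. Since the $\alpha_j, \beta_j$ are supported away from $\partial \Sigma_h^1$, the boundary $S^1$-bundle has holonomy $(\phi_i, \psi_i)$ on the first $g$ pairs of generators and trivial holonomy on the remaining ones; this is precisely a stabilisation of $M$.

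The step I expect to be the main obstacle is not the perfectness input, which is off-the-shelf from Lemma \ref{Symp_perfect}, but rather the geometric bookkeeping in the middle paragraph: one must verify that $\Omega$ extends to a genuine symplectic form $\omega_h$ on $\Sigma_h^1$ for which the identity-extended maps $\bar{\phi}_i, \bar{\psi}_i$ remain symplectomorphisms. Since $\bar{\phi}_i, \bar{\psi}_i$ act trivially on the capping region, any $\omega_h$ restricting to $\Omega$ on the collar works, so the issue reduces to exhibiting such an extension; this can be handled by patching $\Omega$ with an area form on the complement using a cut-off function and then applying Moser's argument to smooth out the result, provided the total area is chosen compatibly with the behaviour of $\Omega$ on the collar. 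Once this is in hand, the remainder is a direct transcription of the smooth argument of Proposition \ref{boundary_Euler_class}.
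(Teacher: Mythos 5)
Your proposal is correct and follows the paper's own proof essentially verbatim: extend the boundary holonomies via Proposition \ref{flat_extension}, symplectically embed the annulus as a collar of $\partial\Sigma_h^1$ (the paper handles your ``main obstacle'' simply by choosing the symplectic form on $\Sigma_h^1$ suitably, which is exactly your patching remark), kill the commutator defect using the perfectness of $Symp^c(\Sigma_h^1)$ from Lemma \ref{Symp_perfect}, and read off the holonomy over $\Sigma_{g+g'}$ as in Proposition \ref{boundary_Euler_class}.
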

\begin{proof}
Let $\pi_1(\Sigma_g) \stackrel{\rho} \rightarrow Diff_0(S^1)$ be the holonomy representation associated to $M$ and let $\tilde{\phi}_i, \tilde{\psi}_i \in Symp(A)$ be the extensions given by Proposition \ref{flat_extension}. After a suitable choice of symplectic form on $\Sigma_h^1$, we may symplectically embed $A = S^1 \times [0,1]$ in $\Sigma_h^1$ so that $S^1 \times \{ 0 \}$ maps to $\partial \Sigma_h^1$ . We then consider $\eta = \prod_i^g [\tilde{\phi}_i, \tilde{\psi}_i]$ as an element in $Symp^c(\Sigma_h^1)$. By Lemma \ref{Symp_perfect} this group is perfect and, thus, we may write $\eta^{-1}$ as a product of $g'$ commutators. We then define the associated flat bundle $E'$ over $\Sigma_{g + g'}$ as in the proof of Proposition \ref{boundary_Euler_class}, and by construction the boundary of $E'$ is a stabilisation of $M$.
\end{proof}
Theorem \ref{symp_flat_extension} can be interpreted in terms of the five-term exact sequence of a certain extension of groups. For this we let $Symp(\Sigma_h^1)$ as usual denote the group of symplectomorphisms of $\Sigma_h^1$. We further let $Symp(\Sigma_h^1, \partial \Sigma_h^1)$ denote those symplectomorphisms that restrict trivially to the boundary. Then as a consequence of Proposition \ref{flat_extension} the following sequence, which is given by restriction to $\partial \Sigma_h^1$, is exact:
\[1 \to Symp(\Sigma_h^1, \partial \Sigma_h^1) \to Symp(\Sigma_h^1) \to Diff^+(\partial \Sigma_h^1) = Diff_0(S^1) \to 1.\]
With this notation we have the following proposition.
\begin{prop}\label{conn_surj}
For $h \geq 3$ the connecting homomorphism in the five-term exact sequence in real cohomology associated to the following exact sequence is trivial:
\[1 \to Symp(\Sigma_h^1, \partial \Sigma_h^1) \to Symp(\Sigma_h^1) \to Diff^+(\partial \Sigma_h^1) = Diff_0(S^1) \to 1.\]
\end{prop}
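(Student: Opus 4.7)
The plan is to pass to the dual formulation in real homology. Set $G = Symp(\Sigma_h^1)$, $N = Symp(\Sigma_h^1, \partial\Sigma_h^1)$, $Q = Diff_0(S^1)$, and denote the projection by $\pi\colon G \to Q$. Exactness of the five-term sequence
\[H^1(N; \mathbb{R})^Q \stackrel{\delta}{\longrightarrow} H^2(Q; \mathbb{R}) \stackrel{\pi^*}{\longrightarrow} H^2(G; \mathbb{R})\]
shows that $\delta = 0$ if and only if $\pi^*$ is injective. Since $\mathbb{R}$ is a field, universal coefficients identifies $H^2(-;\mathbb{R})$ with $\mathrm{Hom}(H_2(-;\mathbb{Z}),\mathbb{R})$, so this is in turn equivalent to the surjectivity of $\pi_*\colon H_2(G;\mathbb{R}) \to H_2(Q;\mathbb{R})$, which I would aim to establish already at the integral level.

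For any discrete group $\Gamma$, every class in $H_2(\Gamma;\mathbb{Z})$ is of the form $\rho_*[\Sigma_g]$ for some homomorphism $\rho\colon \pi_1(\Sigma_g) \to \Gamma$ from a closed oriented surface group (a standard consequence of Hopf's formula, equivalently of the surjection $\Omega_2^{SO}(B\Gamma) \twoheadrightarrow H_2(B\Gamma;\mathbb{Z})$). Applied to $Q$, such a $\rho$ is precisely the holonomy of a flat $S^1$-bundle $M \to \Sigma_g$. By Theorem~\ref{symp_flat_extension}, some stabilisation $\bar M$ of $M$ is the boundary of a flat $\Sigma_h^1$-bundle $E'$ with symplectic holonomy, classified by a homomorphism $\tilde\rho\colon \pi_1(\Sigma_{g+g'}) \to G$ whose composition with $\pi$ is the holonomy $\bar\rho$ of $\bar M$. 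Thus $\pi_*(\tilde\rho_*[\Sigma_{g+g'}]) = [\bar M]$ in $H_2(Q)$.

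The last point to verify is that stabilisation preserves the $H_2(Q)$-class. This is transparent from the construction in the proof of Theorem~\ref{symp_flat_extension}: $\bar\rho$ sends the first $2g$ standard generators of $\pi_1(\Sigma_{g+g'})$ to $\rho(a_i),\rho(b_i)$ and the remaining $2g'$ generators to the identity, because the extra commutators used to trivialise $\eta$ lie in $Symp^c(\Sigma_h^1)$ and so restrict to the identity on $\partial\Sigma_h^1$. Hence $\bar\rho$ factors through the degree-one pinch map $\Sigma_{g+g'} \twoheadrightarrow \Sigma_g$, giving $[\bar M] = [M]$ in $H_2(Q)$, and therefore $[M]\in \mathrm{Im}(\pi_*)$ for every surface class. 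I do not foresee a serious obstacle: all the geometric substance has already been packaged into Theorem~\ref{symp_flat_extension}, and what remains is the dualisation step together with the observation that the stabilisation procedure is homologically invisible on the boundary.
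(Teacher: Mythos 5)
Your proposal is correct and follows essentially the same route as the paper: dualise via the Universal Coefficient Theorem to reduce triviality of $\delta$ to surjectivity of $H_2(Symp_{\delta}(\Sigma_h^1)) \to H_2(Diff_{0,\delta}(S^1))$, represent classes by surface-group representations, and invoke Theorem \ref{symp_flat_extension} together with the observation that stabilisation leaves the boundary class in $H_2(Diff_{0,\delta}(S^1))$ unchanged. Your explicit justification of that last point (the extra generators map into $Symp^c(\Sigma_h^1)$, so the stabilised boundary holonomy factors through the degree-one pinch map) is exactly the detail the paper leaves implicit.
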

\begin{proof}
By the Universal Coefficient Theorem it suffices to show that the map
\[H_2(Symp_{\delta}(\Sigma_h^1)) \to H_2( Diff_{0,{\delta}}(S^1))\]
is surjective on integral cohomology. This follows immediately from Theorem \ref{symp_flat_extension}, since any flat $S^1$-bundle extends after stabilisation and this does not change the homology class represented by this bundle in $H_2( Diff_{0,{\delta}}(S^1))$.
\end{proof}
The Godbillon-Vey class of the horizontal foliation of a flat $S^1$-bundle $M$ defines an element $GV$ in $H^2( Diff_{0,{\delta}}(S^1), \mathbb{R})$, which is non-trivial by the work of Thurston (cf.\ \cite{Bott}). It is possible that the Godbillon-Vey class provides an obstruction to the existence of a flat symplectic bundle $E$ that bounds $M$. However, by Proposition \ref{conn_surj} the image of the class $GV$ in $H^2(Symp_{\delta}(\Sigma_h^1), \mathbb{R})$ is non-trivial. Geometrically, this means that after stabilisation the horizontal foliation of any $S^1$-bundle extends to a transversally symplectic foliation on some surface bundle $E$ with fibre $\Sigma_h^1$. In particular, the Godbillon-Vey class is not an obstruction to finding a null-cobordism that extends the horizontal foliation of $M$ to the interior of $E$ symplectically.

\section{Flat bundles and the extended Hamiltonian group}\label{Flat_Ham}
In Section \ref{fill_poss} we showed that any flat circle bundle over a surface can be filled by a flat disc bundle with smooth holonomy after stabilisation. However, as was shown in \cite{Tsu}, it is not in general possible to fill in a flat circle bundle by a flat disc bundle that has symplectic holonomy, even after stabilisation, since the existence of such a filling implies that the Euler class of the circle bundle vanishes. More specifically Tsuboi proved the following theorem.
\begin{thm}[\cite{Tsu}]\label{Tsuboi}
Let $\pi_1(\Sigma_g) \stackrel{\psi} \rightarrow Diff_0(S^1)$ be a homomorphism and let $a_i, b_i$ be standard generators of $\pi_1(\Sigma_g)$. Furthermore, let $f_i, h_i \in Symp(D^2)$ be extensions of $\psi(a_i), \psi(b_i)$ respectively and let $e(E)$ denote the Euler class of the total space of the $S^1$-bundle $E$ associated to $\psi$. Then 
\[-\pi^2 e(E) = Cal([f_1, h_1]...[f_g, h_g]).\]
\end{thm}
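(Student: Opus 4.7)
The plan is to realise Tsuboi's formula as the transgression of the Calabi homomorphism through the five-term exact sequence of the restriction-to-boundary extension
\[
1 \to Symp(D^2,\partial D^2) \to Symp(D^2) \xrightarrow{r} Diff_0(S^1) \to 1,
\]
where $Symp(D^2,\partial D^2)$ denotes symplectomorphisms that are the identity near $\partial D^2$. That $r$ is surjective — i.e.\ every $\phi\in Diff_0(S^1)$ extends to a symplectomorphism of the disc — is proved by a Moser-type argument entirely analogous to the one used in Proposition \ref{flat_extension}; the hypothesis that $\phi$ lies in the identity component is essential so that the extension can be built without changing cohomology class.

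The Calabi homomorphism $Cal: Symp(D^2,\partial D^2)\to\mathbb{R}$ is a group homomorphism that is moreover invariant under conjugation by any element of $Symp(D^2)$, because conjugation preserves both the symplectic form and the compactly supported de Rham cohomology class used to define $Cal$. Hence $Cal$ represents a class $[Cal]\in H^1(Symp_\delta(D^2,\partial D^2);\mathbb{R})^{Diff_0(S^1)}$, and one may apply the connecting homomorphism
\[
\delta: H^1(Symp_\delta(D^2,\partial D^2);\mathbb{R})^{Diff_0(S^1)} \to H^2(Diff_{0,\delta}(S^1);\mathbb{R})
\]
of the five-term sequence. By the explicit description recalled in Appendix \ref{App_five_term}, once a set-theoretic section $s$ of $r$ is chosen, $\delta[Cal]$ is represented by the 2-cocycle $(g,h)\mapsto Cal\bigl(s(g)s(h)s(gh)^{-1}\bigr)$.

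Given the holonomy representation $\psi:\pi_1(\Sigma_g)\to Diff_0(S^1)$, pulling $\delta[Cal]$ back along $\psi$ and evaluating on the fundamental class $[\Sigma_g]$, via the standard cell decomposition of $\Sigma_g$ with a single 2-cell attached along $\prod_{i=1}^g[a_i,b_i]$, yields
\[
\langle\psi^*\delta[Cal],[\Sigma_g]\rangle \;=\; Cal\bigl([f_1,h_1]\cdots[f_g,h_g]\bigr),
\]
the product on the right lying in $Symp(D^2,\partial D^2)$ precisely because its image in $Diff_0(S^1)$ is trivial by the surface relation. This is manifestly the right-hand side of the theorem, so the proof reduces to the universal identification
\[
\delta[Cal] \;=\; -\pi^2\, e \qquad \text{in } H^2(Diff_{0,\delta}(S^1);\mathbb{R}).
\]

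The last step is the main technical hurdle. Since both sides are universal classes on $BDiff_{0,\delta}(S^1)$, it suffices to verify the equality on a single calibrating example. The natural choice is a flat $S^1$-bundle whose holonomy factors through the rotation subgroup $SO(2)\subset Diff_0(S^1)$: rotations extend tautologically to rigid disc rotations, the Calabi invariant of the resulting commutator-product reduces to a direct integration of a radial Hamiltonian against $\omega=dx\wedge dy$, and the Euler number of the bundle is computed in closed form. Matching the two computations fixes the constant at $-\pi^2$. Conceptually no new input is required, but one must track carefully the conventions for the area form, the Calabi integral, and the sign of the Euler class.
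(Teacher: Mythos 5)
Your formal reduction is fine as far as it goes, and it is essentially the computation the paper performs in Theorem \ref{Tsuboi_five} and Proposition \ref{extended_Tsuboi}: $Cal$ is conjugation-invariant, so it transgresses under the five-term sequence of the restriction extension, and evaluating $\psi^*\delta[Cal]$ on the fundamental cycle of $\Sigma_g$ gives $Cal([f_1,h_1]\cdots[f_g,h_g])$, which in particular shows independence of the chosen extensions. (A small point: take the kernel to consist of symplectomorphisms that are the identity \emph{on} $\partial D^2$, not near it; with your definition the sequence is not exact and the commutator product need not lie in the kernel.) But note the logical direction in the paper: Theorem \ref{Tsuboi} is not proved there at all — it is quoted from \cite{Tsu} — and Theorem \ref{Tsuboi_five} \emph{deduces} the identity $\delta[Cal]=-\pi^2 e$ from Tsuboi's formula. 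You are attempting the converse, so the entire content of the theorem is concentrated in your last step, the universal identification $\delta[Cal]=-\pi^2 e$ in $H^2(Diff_{0,\delta}(S^1),\mathbb{R})$, and the argument you offer for it does not work.

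Two reasons. First, $H^2(Diff_{0,\delta}(S^1),\mathbb{R})$ is not one-dimensional: besides the Euler class it contains the Godbillon--Vey class, non-trivial by Thurston (the paper uses exactly this in Section \ref{fill_poss}), and these are not proportional. Hence agreement of $\delta[Cal]$ and $-\pi^2 e$ on a single $2$-cycle cannot identify the classes; you would need agreement on all of $H_2(Diff_{0,\delta}(S^1))$, which is precisely the statement to be proved. Second, your calibrating example is degenerate: a homomorphism $\pi_1(\Sigma_g)\to SO(2)$ lands in an abelian group, so the associated flat circle bundle has Euler number zero (flat $SO(2)$-bundles have vanishing Euler class, as forced by Milnor--Wood), and the rigid rotations you use as extensions commute, so $[f_1,h_1]\cdots[f_g,h_g]=Id$ and its Calabi invariant is zero. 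The check reads $0=0$ and fixes neither the constant $-\pi^2$ nor even the fact that $\delta[Cal]$ is a multiple of $e$ (rather than involving, say, a Godbillon--Vey contribution). To obtain nonzero Euler number the holonomy must be non-abelian (e.g.\ Fuchsian in $PSL(2,\mathbb{R})$), and there the extensions are no longer rigid rotations, so the Calabi integral is no longer a trivial radial computation. What is actually needed is Tsuboi's argument in \cite{Tsu}: an integral formula comparing the Calabi invariant of the commutator product with an explicit cocycle (translation numbers) representing the Euler class, valid for arbitrary holonomy and arbitrary area-preserving extensions. That analytic core is missing from your proposal.
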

In the case of a disc, a diffeomorphism is symplectic if and only if it is Hamiltonian. For bundles with fibres of higher genus we shall generalise Tsuboi's result under the assumption that the holonomies are Hamiltonian. As usual a symplectomorphism $\psi \in Symp_0(\Sigma_h^1)$ will be called Hamiltonian if it is isotopic to the identity via an isotopy $\psi_t$ such that $\iota_{\dot{\psi}_t} \omega = dH_t$ for $0 \leq t \leq 1$. As a first step, following \cite{Tsu} we note that it is always possible to extend diffeomorphisms on the boundary of $\Sigma_h^1$ to its interior by a Hamiltonian diffeomorphism. It suffices to consider the case $M = [0, 1) \times S^1$ and to show that any diffeomorphism of the boundary extends to a Hamiltonian diffeomorphism on $M$ that has compact support. The following is essentially Lemma 2.2 of \cite{Tsu} and the proof will be omitted. 
\begin{lem}\label{extend_volume}
Let $M = [0, 1) \times S^1$ and let $\omega$ be a symplectic form of finite total volume. Then the map $Ham^c(M) \to Diff_0(S^1)$ given by restriction is surjective.
\end{lem}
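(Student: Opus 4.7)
The plan is to construct a compactly supported Hamiltonian isotopy on $M$ whose boundary restriction realises a prescribed $\phi \in Diff_0(S^1)$. First, I would choose a smooth isotopy $\{\phi_t\}_{t \in [0,1]}$ in $Diff_0(S^1)$ with $\phi_0 = \mathrm{Id}$ and $\phi_1 = \phi$, and write its generating vector field as $X_t = f_t(\theta)\,\partial_\theta$ for a smooth family $f_t(\theta)$.

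Next, on a collar $U = [0, \epsilon) \times S^1$ of $\partial M$, a Moser--Darboux argument gives coordinates $(s, \theta)$ with $\omega|_U = ds \wedge d\theta$; here the finite total volume of $\omega$ is what lets one use the symplectic action variable as a bona fide collar coordinate. Choosing a smooth cut-off $\chi(s)$ equal to $1$ in a neighbourhood of $s = 0$ and vanishing for $s \geq \epsilon/2$, I define
\[ H_t(s, \theta) = -\chi(s)\, s\, f_t(\theta), \]
extended by zero to all of $M$. A direct computation yields
\[ X_{H_t} = -\chi(s)\, s\, f_t'(\theta)\, \partial_s + \bigl(\chi(s) + s\, \chi'(s)\bigr) f_t(\theta)\, \partial_\theta, \]
which equals $f_t(\theta)\, \partial_\theta$ along $\partial M = \{s = 0\}$ and vanishes identically for $s \geq \epsilon/2$. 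Consequently, the Hamiltonian isotopy $\psi_t$ generated by $H_t$ preserves $\partial M$, restricts to $\phi_t$ there, and has support contained in $[0, \epsilon/2] \times S^1$; setting $\tilde{\phi} = \psi_1$ then produces the desired element of $Ham^c(M)$ with $\tilde{\phi}|_{\partial M} = \phi$.

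The only substantive point is the choice of collar coordinates in which $\omega$ takes the standard product form, where the finite volume hypothesis enters. Once this is arranged, the cut-off Hamiltonian is dictated by the boundary data and the rest of the verification is immediate.
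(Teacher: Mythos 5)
Your construction is correct. Writing $\omega = g(t,\theta)\, dt \wedge d\theta$ near the boundary and passing to $s(t,\theta) = \int_0^t g(u,\theta)\, du$ gives exactly the collar coordinates you invoke, and the cut-off Hamiltonian $H_t(s,\theta) = -\chi(s)\,s\,f_t(\theta)$ does generate a compactly supported Hamiltonian isotopy that preserves $\{s=0\}$, restricts there to $\phi_t$, and is the identity for $s \geq \epsilon/2$; its time-one map is therefore the required element of $Ham^c(M)$, and the sign conventions match the paper's $\iota_{\dot\psi_t}\omega = dH_t$. Note that the paper itself omits the proof, referring to Lemma 2.2 of Tsuboi, so there is no argument in the text to compare with; yours is the standard direct construction and certainly suffices. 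The one inaccurate remark is your claim that the finite total volume of $\omega$ is what makes the collar normalisation possible: that normal form is a purely local statement near $\{0\} \times S^1$ (the integral defining $s$ is taken over a compact collar and is finite for any area form), so your proof never actually uses the finite-volume hypothesis. That hypothesis is simply carried along from the setting in which the lemma is applied (a collar of the boundary inside $D^2$ or $\Sigma_h^1$), and misattributing its role does not damage the argument.
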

As a consequence of Lemma \ref{extend_volume} we obtain an exact sequence:
\begin{equation*}
1 \to Ham(\Sigma_h^1, \partial \Sigma_h^1) \to Ham(\Sigma_h^1) \to Diff^+(\partial \Sigma_h^1) = Diff_0(S^1) \to 1,
\end{equation*}
where $Ham(\Sigma_h^1, \partial \Sigma_h^1)$ denotes the intersection $Symp(\Sigma_h^1, \partial \Sigma_h^1) \cap Ham(\Sigma_h^1)$.

Exactly as in the case of compactly supported symplectomorphisms, we define a Flux homomorphism $Symp_0(\Sigma_h^1) \to H^1(\Sigma_h^1, \mathbb{R})$ via the formula
\[ Flux(\psi) = \int_0^1(\iota_{\dot{\psi}_t} \omega) \thinspace dt, \]
where $\psi_t$ is an isotopy from $Id$ to $\psi$. As in the compactly supported case, one can show that $Flux(\psi) = [\lambda - \psi^* \lambda]$ for any primitive $ \lambda$ such that $\omega = -d \lambda$ (cf.\ \cite{McS}, Lemma 10.14). Hence, $Flux$ is well-defined independently of the choice of isotopy $\psi_t$ and primitive $\lambda$. Moreover, it is easy to show that $Ker(Flux) = Ham(\Sigma_h^1)$ (cf.\ \cite{Bow2}, Lemma 5.2.4).

One may also define a Calabi homomorphism $Cal$ on $Ham(\Sigma_h^1, \partial \Sigma_h^1)$. For this let $\lambda$ be a primitive such that $\omega = - d \lambda$ and define
\[Cal(\phi) = -\frac{1}{3} \int_{\Sigma_h^1} \phi^* \lambda \wedge \lambda .\]
Again this definition is independent of the choice of $\lambda$ (see \cite{McS}, Lemma 10.26).

We will now extend Tsuboi's result to bundles with fibre $\Sigma_h^1$. In order to do this we shall need to reinterpret Theorem \ref{Tsuboi} in terms of a five-term exact sequence. Now the map $Cal$ is an element of $H^1(Ham_{\delta}(D^2, \partial D^2),\mathbb{R})$ and we claim that it is invariant under the conjugation action of $Ham(D^2)$. For let $\psi \in Ham(D^2)$ and $\phi \in  Ham(D^2, \partial D^2)$, and let $\lambda$ be a primitive such that $ \omega = - d \lambda$. Then $\psi^* \lambda$ is also a primitive for $\omega$ and we have
\begin{equation*}
\int_{D^2} \phi^* \lambda \wedge \lambda =  \int_{D^2} \phi^* (\psi^* \lambda) \wedge (\psi^*\lambda) =  \int_{D^2} (\psi \phi \psi^{-1})^* \lambda \wedge \lambda. \end{equation*}
Thus, $Cal \in H^1(Ham_{\delta}(D^2, \partial D^2),\mathbb{R})^{Diff_0(S^1)}$ and we claim that Tsuboi's result can be interpreted as saying that the image of $Cal$ under the connecting homomorphism in the five-term exact sequence is a non-zero multiple of the Euler class $e$ considered as an element in the real group cohomology of $Diff_0(S^1)$. For this we need to make use of an explicit description of the connecting homomorphism, which is described in Appendix \ref{App_five_term} below.
\begin{thm}\label{Tsuboi_five}
Consider the extension of groups
\[1 \to Ham(D^2, \partial D^2) \to Ham(D^2) \to Diff_0(S^1) \to 1,\]
and let $\delta$ denote the connecting homomorphism in the five-term exact sequence in real cohomology. Then $\delta [Cal]$ is $-\pi^2 \thinspace e$, where $e$ denotes the Euler class in $H^2(Diff_{0,\delta}(S^1), \mathbb{R})$.
\end{thm}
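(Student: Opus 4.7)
The plan is to interpret the theorem as a direct reformulation of Tsuboi's original Theorem \ref{Tsuboi} via the standard cocycle formula for the connecting homomorphism in the five-term exact sequence. As noted just before the statement, $Cal$ is invariant under the conjugation action of $Ham(D^2)$, so $[Cal] \in H^1(Ham_\delta(D^2, \partial D^2), \mathbb{R})^{Diff_0(S^1)}$, and the connecting homomorphism $\delta$ lands in $H^2(Diff_{0,\delta}(S^1), \mathbb{R})$. Two classes in the latter group coincide if and only if they agree on every element of $H_2(Diff_{0,\delta}(S^1))$; since oriented bordism in dimension two is trivial, every such homology class is represented by a homomorphism $\psi \colon \pi_1(\Sigma_g) \to Diff_0(S^1)$ for some $g$, so it suffices to test on these.

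Fix such a $\psi$, with standard generators $a_i, b_i$ mapped to $\psi(a_i), \psi(b_i) \in Diff_0(S^1)$, and let $E$ denote the corresponding flat $S^1$-bundle over $\Sigma_g$. By Lemma \ref{extend_volume}, the restriction map $Ham(D^2) \to Diff_0(S^1)$ is surjective, so we may choose Hamiltonian lifts $f_i, h_i \in Ham(D^2)$ of $\psi(a_i), \psi(b_i)$. The explicit formula for the connecting homomorphism recalled in Appendix \ref{App_five_term} says that, for the 2-cycle in $B\pi_1(\Sigma_g)$ corresponding to the surface relation $\prod_i [a_i, b_i] = 1$, one has
\[
\bigl\langle \delta[Cal],\, \psi_*[\Sigma_g]\bigr\rangle \;=\; Cal\Bigl(\prod_{i=1}^g [f_i, h_i]\Bigr),
\]
where the element $\prod_i [f_i, h_i]$ indeed lies in the kernel $Ham(D^2, \partial D^2)$ because $\psi$ is a homomorphism. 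On the other hand, evaluation of the Euler class on this cycle gives $\langle e, \psi_*[\Sigma_g]\rangle = e(E)$ by the very definition of $e$ as a characteristic class of flat $S^1$-bundles.

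Combining these two evaluations with Tsuboi's Theorem \ref{Tsuboi}, which asserts precisely that
\[
Cal\Bigl(\prod_{i=1}^g [f_i, h_i]\Bigr) \;=\; -\pi^2\, e(E),
\]
we conclude that $\delta[Cal]$ and $-\pi^2 e$ agree on $\psi_*[\Sigma_g]$ for every representation $\psi$. Since such classes generate $H_2(Diff_{0,\delta}(S^1))$, this equality of functionals forces $\delta[Cal] = -\pi^2 e$ in $H^2(Diff_{0,\delta}(S^1), \mathbb{R})$.

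The main point of this proof is conceptual rather than computational: all the analytic input is already packaged in Tsuboi's theorem, and the only thing one must do is match up the cocycle formula for $\delta$ with the surface relation. Accordingly, I expect the only subtle step to be confirming that the cocycle description of $\delta$ in Appendix \ref{App_five_term} really does output $Cal(\prod_i [f_i, h_i])$ when paired with the standard 2-cycle of $\Sigma_g$; however, this is a formal verification from the bar resolution, so no genuine obstacle is anticipated.
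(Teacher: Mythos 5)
Your proposal is correct and follows essentially the same route as the paper: evaluate $\delta[Cal]$ and $-\pi^2 e$ on surface-group $2$-cycles (which generate $H_2(Diff_{0,\delta}(S^1))$), identify $\langle\delta[Cal],\psi_*[\Sigma_g]\rangle$ with $Cal([f_1,h_1]\cdots[f_g,h_g])$ via the explicit connecting-homomorphism formula of Appendix \ref{App_five_term}, and invoke Tsuboi's formula. If anything, your citation of Theorem \ref{Tsuboi} at the final step is the cleaner reference, since the paper's proof nominally appeals to Proposition \ref{extended_Tsuboi}, whose own proof relies on Theorem \ref{Tsuboi_five}; the intended input there is indeed Tsuboi's original disc-case result.
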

\begin{proof}
In order to verify the equality $\delta Cal = -\pi^2 \thinspace e$ in real cohomology, it suffices to evaluate both sides on 2-cycles $Z$ in $H_2(Diff_{0,\delta}(S^1))$. Such a cycle may be thought of as the image of the fundamental class under the map induced by a representation of a surface group $\pi_1(\Sigma_g) \stackrel{\psi} \rightarrow Diff_0(S^1)$. If we let $a_i, b_i$ be standard generators of $\pi_1(\Sigma_g)$, then a generator of $H_2(\pi_1(\Sigma_g))$  may be described by the group 2-cycle
\begin{align*}
 z & = (a_1,b_1) + (a_1b_1,a_1^{-1}) + ... + (a_1b_1...b_{g-1}a_{g}^{-1}, b_g^{-1})\\
&  - (2g+1)(e,e) - \sum_{i = 1}^g (a_i,a_i^{-1}) + (b_i,b_i^{-1}) .
\end{align*}
Since $[a_1, b_1]...[a_g, b_g] = e$ in $\pi_1(\Sigma_g)$, we compute that
\begin{align*} \partial z  =  \sum_{i = 1}^g (a_i) + (a_i^{-1}) + (b_i) +(b_i^{-1}) &- \sum_{i = 1}^g [(a_i) - (e) +(a_i^{-1}) + (b_i) - (e) +(b_i^{-1})] \\ 
& - 2g(e) + ([a_1, b_1]...[a_g, b_g]) -(e)  = 0.
\end{align*}
We let $f_i, h_i$ denote representatives of $\psi(a_i), \psi(b_i)$ in $Diff_0(S^1)$ considered as a quotient group, and let $\tilde{z}$ be the associated lift of the fundamental cycle above. Then we compute
\[\partial \tilde{z} = ([f_1, h_1]...[f_g, h_g]) - (e).\]
Thus, by Lemma \ref{connecting_hom}, there is a set-theoretic extension $Cal_S$ of $Cal$ to $Ham(D^2)$ such that
\begin{align*}
  \delta Cal (Z) & =  \delta Cal_S(\psi_* [\Sigma_g]) = Cal_S(\partial \tilde{z}) \\
& = Cal_S(([f_1, h_1]...[f_g, h_g]) - (e))= Cal([f_1, h_1]...[f_g, h_g]),
\end{align*}
and by Proposition \ref{extended_Tsuboi} this is equal to $-\pi^2 \thinspace e(E)$.
\end{proof}
\noindent We are now ready to generalise Theorem \ref{Tsuboi} to the case of surfaces of higher genus.
\begin{prop}\label{extended_Tsuboi}
Let $\pi_1(\Sigma_g) \stackrel{\psi} \rightarrow Diff_0(S^1)$ be a homomorphism and let $a_i,b_i$ be standard generators of $\pi_1(\Sigma_g)$. Let $f_i, h_i \in Ham(\Sigma_h^1)$ be any extensions of $\psi(a_i), \psi(b_i)$ respectively and let $e(E)$ denote the Euler class of the total space of the $S^1$-bundle $E$ associated to $\psi$. Then 
\[-\pi^2 e(E) = Cal([f_1, h_1]...[f_g, h_g]).\]
\end{prop}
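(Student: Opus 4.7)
The plan is to mirror the strategy of Theorem~\ref{Tsuboi_five}, replacing the disc by the higher genus surface $\Sigma_h^1$. First I would define the Calabi invariant on $Ham(\Sigma_h^1, \partial \Sigma_h^1)$ by the same formula $Cal(\phi) = -\frac{1}{3}\int_{\Sigma_h^1} \phi^*\lambda \wedge \lambda$; a primitive $\lambda$ of $\omega$ exists on $\Sigma_h^1$ because the surface has non-empty boundary, and the standard argument (cf.\ \cite{McS}, Lemma 10.26) shows $Cal$ is independent of the choice of $\lambda$. Conjugation-invariance of $Cal$ under $Ham(\Sigma_h^1)$ then follows exactly as in the disc case, since for a Hamiltonian $\psi$ the form $\psi^*\lambda$ is again a primitive of $\omega$.

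Next I would apply the five-term exact sequence in real cohomology to the extension
\[1 \to Ham(\Sigma_h^1, \partial \Sigma_h^1) \to Ham(\Sigma_h^1) \to Diff_0(S^1) \to 1,\]
which is exact by Lemma~\ref{extend_volume}. Running the argument of Theorem~\ref{Tsuboi_five} verbatim, using the same explicit group $2$-cycle $z$ representing the fundamental class of $\Sigma_g$, one obtains
\[\delta[Cal_{\Sigma_h^1}](Z) = Cal([f_1, h_1]\cdots[f_g, h_g])\]
for any Hamiltonian lifts $f_i, h_i$ of $\psi(a_i), \psi(b_i)$; in particular, the right hand side is a cohomological invariant and is independent of the choice of lifts.

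To conclude, it remains to identify $\delta[Cal_{\Sigma_h^1}]$ with $-\pi^2\, e$ in $H^2(Diff_{0,\delta}(S^1),\mathbb{R})$. Here I would exploit the independence of lifts: by Lemma~\ref{extend_volume} we may choose extensions $\tilde f_i, \tilde h_i$ that are supported in a collar $C = [0,\epsilon) \times S^1$ of $\partial \Sigma_h^1$. After arranging the symplectic form on $\Sigma_h^1$ to agree with a standard model on $C$, the same collar embeds symplectically into $D^2$ as a neighbourhood of $\partial D^2$; and with compatible choices of primitive $\lambda$, the product $\eta = \prod_{i=1}^g [\tilde f_i, \tilde h_i]$ has identical Calabi invariant whether viewed as an element of $Ham(\Sigma_h^1, \partial \Sigma_h^1)$ or of $Ham(D^2, \partial D^2)$, since the integrand $\eta^*\lambda \wedge \lambda$ vanishes outside the support of $\eta$. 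Applying Theorem~\ref{Tsuboi} in the disc case then yields $Cal(\eta) = -\pi^2\, e(E)$, completing the proof. The main technical point to check will be this last identification: one must verify that the primitives $\lambda$ can be matched on $C$ and that the two Calabi integrals genuinely reduce to a common collar integral, which ultimately rests on the independence of $Cal$ from the choice of primitive together with the support properties of $\eta$.
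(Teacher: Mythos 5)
Your proof is correct and essentially identical to the paper's: the conjugation-invariance of $Cal$ plus the five-term/cocycle computation show that $Cal([f_1,h_1]\cdots[f_g,h_g])$ is independent of the choice of Hamiltonian extensions, and evaluating on collar-supported extensions furnished by Lemma \ref{extend_volume}, viewed inside the disc, reduces the value to the disc case covered by Theorem \ref{Tsuboi} (equivalently Theorem \ref{Tsuboi_five}). The primitive-matching point you flag at the end is precisely what the paper dispatches with its ``appropriately chosen area form $\Omega$ on $D^2$'': since the period of a primitive around the boundary circle is minus the total area, matching $\lambda$ on the collar amounts to normalising the two total areas to agree, after which both Calabi integrals localise to the collar exactly as you argue.
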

\begin{proof}
By Lemma \ref{extend_volume} we may assume that the extensions $f_i, h_i$ are Hamiltonian and have support in a collar $K = [0,1) \times S^1$ of the boundary. We may then consider $K \subset D^2$ with an appropriately chosen area form $\Omega$ on $D^2$ and $f_i, h_i$ as elements in $Diff_{\Omega}(D^2)$. We then compute
\begin{equation*}
 \psi^* \delta Cal([\Sigma_g]) = Cal^{\Sigma^1_h}([f_1, h_1]...[f_{g}, h_{g} ]) = Cal^{D^2}([f_1, h_1]...[f_{g}, h_{g} ]),
\end{equation*}
where the first equality follows as in Theorem \ref{Tsuboi_five} and the second follows from our choice of extensions. The latter value is $-\pi^2 \thinspace e(E)$ by Theorem \ref{Tsuboi_five}. Thus, since the left hand side of the equation  above is independent of any choices, we conclude that for \emph{any} extensions $f_i,h_i$
\[Cal^{\Sigma^1_h}([f_1, h_1]...[f_{g}, h_{g} ]) = -\pi^2 \thinspace e(E). \qedhere\]
\end{proof}
In particular, it follows by the exactness of the five-term sequence that the boundary of any flat bundle with holonomy in $Ham(\Sigma_h^1)$ is trivial as an $S^1$-bundle. Furthermore, with our interpretation of Tsuboi's result we may extend our discussion to extended Hamiltonian groups as introduced in \cite{KM}.  To this end we define $\widetilde{Flux}_{\lambda}: Symp(\Sigma_h^1) \to H^1(\Sigma_h^1, \mathbb{R})$ by 
\[\widetilde{Flux}_{\lambda}(\phi) = [(\phi^{-1})^*\lambda - \lambda] \]
for some fixed primitive $ - d \lambda = \omega$. This map is a crossed homomorphism and is referred to as an \emph{extended Flux homomorphism}, since is restricts to the ordinary Flux map on $Symp_0(\Sigma_h^1)$. We note that the definition of $\widetilde{Flux}_{\lambda}$ depends in an essential way on the choice of primitive $\lambda$. For if $\lambda '$ is another primitive, then $\lambda - \lambda' = \alpha$ is closed and
\begin{equation}\label{Extended_Flux_1}
\widetilde{Flux}_{\lambda}(\phi) = \widetilde{Flux}_{\lambda'}(\phi) + [(\phi^{-1})^* \alpha - \alpha].
\end{equation}
In terms of group cohomology this means that $\widetilde{Flux}_{\lambda}$ and $\widetilde{Flux}_{\lambda'}$ are cohomologous, when considered as elements in $H^1(Symp(\Sigma_h^1),H^1(\Sigma_h^1, \mathbb{R}))$. The \emph{extended Hamiltonian group} $\widetilde{Ham}(\Sigma_h^1)$  is defined as the kernel of $\widetilde{Flux}_{\lambda}$, which is a subgroup since $\widetilde{Flux}_{\lambda}$ is a crossed homomorphism. The group $Ham(\Sigma_h^1)$ is contained in $ \widetilde{Ham}(\Sigma_h^1)$ and we may extend the Calabi homomorphism to a map $\widetilde{Cal}_{\lambda}$ on the group 
$\widetilde{Ham}(\Sigma_h^1, \partial \Sigma_h^1) = Symp(\Sigma_h^1, \partial \Sigma_h^1) \cap \widetilde{Ham}(\Sigma_h^1)$ by defining 
\[\widetilde{Cal}_{\lambda}(\phi) =  -\frac{1}{3} \int_{\Sigma_h^1} \phi^* \lambda \wedge \lambda =  \frac{1}{3} \int_{\Sigma_h^1} (\phi^{-1})^* \lambda \wedge \lambda,\]
where $\lambda$ is the primitive chosen in the definition of $\widetilde{Flux}_{\lambda}$. This is a homomorphism on $\widetilde{Ham}(\Sigma_h^1, \partial \Sigma_h^1)$, since the following holds on $Symp(\Sigma_h^1, \partial \Sigma_h^1)$ (cf.\ \cite{KM2}, Prop.\ 19):
\begin{equation}\label{Calabi_prop_1_extended}
\widetilde{Cal}_{\lambda}(\phi \psi) = \widetilde{Cal}_{\lambda}(\phi) + \widetilde{Cal}_{\lambda}(\psi) +\frac{1}{3} \widetilde{Flux}_{\lambda}(\phi) \wedge (\phi^{-1})^*\widetilde{Flux}_{\lambda}(\psi).
\end{equation}
Again the definition of $\widetilde{Cal}_{\lambda}$ depends on the choice of primitive $\lambda$. However, we do have the following technical lemma, which will be important in showing the equivariance of $\widetilde{Cal}_{\lambda}$.
\begin{lem}\label{Cal_invariance}
Let $\phi \in \widetilde{Ham}_{\lambda}(\Sigma_h^1, \partial \Sigma_h^1) \cap \widetilde{Ham}_{\lambda'}(\Sigma_h^1, \partial \Sigma_h^1)$ for two different primitives $\lambda, \lambda'$ and set $\alpha = \lambda - \lambda'$, further assume that $\phi^* \alpha - \alpha = d H_{\phi}$ is exact. Then $\widetilde{Cal}_{\lambda}(\phi) = \widetilde{Cal}_{\lambda'}(\phi)$.
\end{lem}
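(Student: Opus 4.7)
The plan is to compare $\widetilde{Cal}_\lambda(\phi)$ and $\widetilde{Cal}_{\lambda'}(\phi)$ by direct expansion in terms of $\alpha = \lambda - \lambda'$ and the available primitives, then exploit the freedom to shift those primitives by constants to kill the boundary terms. Since $\phi \in \widetilde{Ham}_{\lambda'}(\Sigma_h^1,\partial\Sigma_h^1)$ there is a function $Q$ on $\Sigma_h^1$ with $\phi^*\lambda' - \lambda' = dQ$, defined up to an additive constant; together with the hypothesised $H_\phi$ satisfying $\phi^*\alpha - \alpha = dH_\phi$, these are the only auxiliary data needed. Expanding $\phi^*\lambda\wedge\lambda$ after the substitution $\lambda = \lambda' + \alpha$ and using that the pairings $\lambda'\wedge\alpha + \alpha\wedge\lambda'$ and $\alpha\wedge\alpha$ both vanish, the difference of integrands reduces to $dQ\wedge\alpha + dH_\phi\wedge\lambda' + dH_\phi\wedge\alpha$.

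The second step is to rewrite this three-term expression, using $d\alpha = 0$ and $d\lambda' = -\omega$, as $H_\phi\,\omega + d(Q\alpha + H_\phi\lambda)$. Applying Stokes' theorem then yields
\[-3\bigl(\widetilde{Cal}_\lambda(\phi) - \widetilde{Cal}_{\lambda'}(\phi)\bigr) = \int_{\Sigma_h^1} H_\phi\,\omega + \int_{\partial\Sigma_h^1}(Q\alpha + H_\phi\lambda).\]
Because $\phi$ is the identity on $\partial\Sigma_h^1$, both $dQ$ and $dH_\phi$ vanish there, so $Q$ and $H_\phi$ are constant on the (single) boundary circle. I will normalise both to be identically zero on $\partial\Sigma_h^1$ by suitable constant shifts; these shifts do not alter the right hand side, thanks to $\int_{\Sigma_h^1}\omega + \int_{\partial\Sigma_h^1}\lambda = 0$ (for the $H_\phi$-shift) and $\int_{\partial\Sigma_h^1}\alpha = 0$ (for the $Q$-shift, since $\alpha$ is closed). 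Under this normalisation the two boundary integrals disappear immediately.

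It then remains only to verify that $\int_{\Sigma_h^1} H_\phi\,\omega = 0$ under the same normalisation. I would rewrite $H_\phi\,\omega = dH_\phi\wedge\lambda' - d(H_\phi\lambda')$, so that the boundary term drops out once more, and convert the bulk piece into $\int_{\Sigma_h^1}(\phi^*\alpha - \alpha)\wedge\lambda'$. A change of variables via $\phi^{-1}$ in the first summand produces $\int_{\Sigma_h^1}\alpha\wedge(\phi^{-1})^*\lambda'$; using the analogous relation $(\phi^{-1})^*\lambda' - \lambda' = -d(Q\circ\phi^{-1})$ with $Q\circ\phi^{-1}$ still vanishing on $\partial\Sigma_h^1$, one final application of Stokes collapses the remaining difference to zero. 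The main obstacle is bookkeeping: neither the bulk integral nor either boundary integral vanishes in isolation, and they are only cohomology-free in a concerted way. The essential geometric input is that $\partial\Sigma_h^1$ has a single component, which is what makes the simultaneous normalisation $Q|_\partial = H_\phi|_\partial = 0$ available.
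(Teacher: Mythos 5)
Your proposal is correct and takes essentially the same route as the paper's proof: you normalise the potential functions to vanish on the connected boundary (using that $\phi$ fixes $\partial \Sigma_h^1$ pointwise, so their pullbacks to the boundary are closed, hence the functions are constant there), reduce the difference $\widetilde{Cal}_{\lambda}(\phi) - \widetilde{Cal}_{\lambda'}(\phi)$ via Stokes' theorem to the single term $\int_{\Sigma_h^1} H_{\phi}\, \omega$, and then show this vanishes by the change of variables under $\phi$ plus one further application of Stokes. The only difference is cosmetic bookkeeping: you expand the difference directly and pair with $\lambda'$ using the potential $Q$ for $\phi^*\lambda' - \lambda'$, whereas the paper writes each Calabi value as $-\tfrac{1}{3}\int F\,\omega$ for a normalised potential and pairs with $\lambda$ using $F_{\phi}$.
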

\begin{proof}
By assumption $ \lambda - (\phi^{-1})^*\lambda$ is exact and hence $\phi^* \lambda - \lambda = dF_{\phi}$ is also exact. Since the boundary of $\Sigma_h^1$ is connected we may further assume that $F_{\phi} = 0$ on $\partial \Sigma_h^1$. We compute
\begin{align*}
\widetilde{Cal}_{\lambda}(\phi) & = -\frac{1}{3} \int_{\Sigma_h^1} \phi^* \lambda \wedge \lambda = -\frac{1}{3} \int_{\Sigma_h^1} (\phi^* \lambda -  \lambda) \wedge \lambda\\
& = -\frac{1}{3} \int_{\Sigma_h^1} dF_{\phi} \wedge \lambda  = -\frac{1}{3} \int_{\Sigma_h^1} d(F_{\phi}\lambda) - F_{\phi}\wedge d \lambda = -\frac{1}{3} \int_{\Sigma_h^1} F_{\phi} \omega \text.\\
\end{align*}
Similarly one computes
\[\widetilde{Cal}_{\lambda'}(\phi) = \widetilde{Cal}_{\lambda}(\phi)   -\frac{1}{3} \int_{\Sigma_h^1} \phi^* \alpha \wedge \alpha = -\frac{1}{3} \int_{\Sigma_h^1} F_{\phi}\omega - \frac{1}{3} \int_{\Sigma_h^1} H_{\phi} \omega,\]
where $\phi^* \alpha - \alpha = dH_{\phi}$ and again we assume that $H_{\phi} = 0$ on $\partial \Sigma_h^1$. We finally have that
\begin{align*}
- \int_{\Sigma_h^1} H_{\phi} \omega & = \int_{\Sigma_h^1} H_{\phi} d \lambda = \int_{\Sigma_h^1} d(H_{\phi}\lambda) - d H_{\phi}\wedge \lambda\\
& = \int_{\Sigma_h^1} ( \alpha - \phi^* \alpha) \wedge \lambda = \int_{\Sigma_h^1} \alpha \wedge \lambda  - \phi^* \alpha \wedge \lambda\\
& = \int_{\Sigma_h^1} \phi^* \alpha \wedge \phi^* \lambda - \phi^* \alpha \wedge \lambda = \int_{\Sigma_h^1} \phi^* \alpha \wedge (\phi^* \lambda - \lambda)\\
& = 0
\end{align*}
since $\lambda - \phi^* \lambda = dF_{\phi}$, $d \alpha = 0$ and $F_{\phi}|_{\partial \Sigma_h^1} = 0$.
\end{proof}
\noindent Lemma \ref{Cal_invariance} then implies that $\widetilde{Cal}_{\lambda}$ is equivariant under the conjugation action of $\widetilde{Ham}(\Sigma_h^1)$.
\begin{cor}\label{Cal_equi}
Let $\psi \in \widetilde{Ham}_{\lambda}(\Sigma_h^1)$, then $\widetilde{Cal}_{\lambda} = \widetilde{Cal}_{\psi^* \lambda}$. In particular, $\widetilde{Cal}_{\lambda}$ is equivariant under the action of $\widetilde{Ham}_{\lambda}(\Sigma_h^1)$.
\end{cor}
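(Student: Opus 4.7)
The plan is to deduce $\widetilde{Cal}_{\lambda} = \widetilde{Cal}_{\psi^* \lambda}$ from Lemma \ref{Cal_invariance} applied to the pair of primitives $\lambda$ and $\lambda' = \psi^* \lambda$, and then obtain the equivariance as a change-of-variables computation built on top of this identity.

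First I would verify the hypotheses of Lemma \ref{Cal_invariance}. Set $\alpha = \lambda - \psi^* \lambda$. Since $\psi \in \widetilde{Ham}_{\lambda}(\Sigma_h^1)$ one has $\widetilde{Flux}_{\lambda}(\psi) = [(\psi^{-1})^* \lambda - \lambda] = 0$, and pulling back by $\psi^*$ shows that $\alpha$ itself is exact, say $\alpha = dH$. For any $\phi \in \widetilde{Ham}_{\lambda}(\Sigma_h^1, \partial \Sigma_h^1)$, the transformation formula (\ref{Extended_Flux_1}) then gives
\[\widetilde{Flux}_{\psi^* \lambda}(\phi) = \widetilde{Flux}_{\lambda}(\phi) - [(\phi^{-1})^* \alpha - \alpha] = 0,\]
since $\alpha$ exact forces $(\phi^{-1})^* \alpha - \alpha$ to be exact as well; in particular $\phi$ lies in $\widetilde{Ham}_{\psi^* \lambda}(\Sigma_h^1, \partial \Sigma_h^1)$. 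Moreover $\phi^* \alpha - \alpha = d(\phi^* H - H)$ is exact, so Lemma \ref{Cal_invariance} applies and yields $\widetilde{Cal}_{\lambda}(\phi) = \widetilde{Cal}_{\psi^* \lambda}(\phi)$, which is the first statement of the corollary.

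For the equivariance, observe first that $\psi$ preserves $\partial \Sigma_h^1$ setwise, so $\psi \phi \psi^{-1}$ restricts to the identity on $\partial \Sigma_h^1$, and a direct application of the crossed homomorphism property of $\widetilde{Flux}_{\lambda}$ together with $\widetilde{Flux}_{\lambda}(\psi) = \widetilde{Flux}_{\lambda}(\phi) = 0$ shows that $\widetilde{Flux}_{\lambda}(\psi \phi \psi^{-1}) = 0$; hence the conjugation action preserves $\widetilde{Ham}_{\lambda}(\Sigma_h^1, \partial \Sigma_h^1)$. Using $(\psi \phi \psi^{-1})^* \lambda = (\psi^{-1})^* \phi^*(\psi^* \lambda)$ and $\lambda = (\psi^{-1})^* \psi^* \lambda$, together with the fact that $\psi^{-1}$ is an orientation-preserving diffeomorphism of $\Sigma_h^1$ (so pullback by $\psi^{-1}$ preserves integrals of top forms), one computes
\[\widetilde{Cal}_{\lambda}(\psi \phi \psi^{-1}) = -\tfrac{1}{3} \int_{\Sigma_h^1} \phi^*(\psi^* \lambda) \wedge \psi^* \lambda = \widetilde{Cal}_{\psi^* \lambda}(\phi),\]
and combining with the first statement gives $\widetilde{Cal}_{\lambda}(\psi \phi \psi^{-1}) = \widetilde{Cal}_{\lambda}(\phi)$.

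The only delicate point is the first step: one must ensure that the primitive $H$ of $\alpha$ can be chosen so that Lemma \ref{Cal_invariance} applies cleanly (which follows automatically from exactness of $\alpha$), and that $\phi$ genuinely belongs to the intersection $\widetilde{Ham}_{\lambda}(\Sigma_h^1, \partial \Sigma_h^1) \cap \widetilde{Ham}_{\psi^* \lambda}(\Sigma_h^1, \partial \Sigma_h^1)$. Once this is established, the rest is the elementary pullback computation above.
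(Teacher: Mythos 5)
Your proposal is correct and takes essentially the same route as the paper: exactness of $\lambda - \psi^*\lambda$ from $\widetilde{Flux}_{\lambda}(\psi)=0$, formula (\ref{Extended_Flux_1}) to identify $\widetilde{Ham}_{\lambda}(\Sigma_h^1,\partial\Sigma_h^1)$ with $\widetilde{Ham}_{\psi^*\lambda}(\Sigma_h^1,\partial\Sigma_h^1)$, Lemma \ref{Cal_invariance} for $\widetilde{Cal}_{\lambda}=\widetilde{Cal}_{\psi^*\lambda}$, and then the pullback change-of-variables computation for equivariance. Your extra check that conjugation by $\psi$ preserves $\widetilde{Ham}_{\lambda}(\Sigma_h^1,\partial\Sigma_h^1)$ is a small detail the paper leaves implicit, but it does not change the argument.
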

\begin{proof}
Let $\lambda' = \psi^* \lambda$, then $\lambda'$ is also a primitive with $- d\lambda' = \omega$ and $\alpha = \lambda' - \lambda = dH_{\psi}$ is exact since $\psi \in \widetilde{Ham}_{\lambda}(\Sigma_h^1)$. By formula (\ref{Extended_Flux_1}), it follows that $\widetilde{Flux}_{\lambda} = \widetilde{Flux}_{\lambda'}$ and hence
\[\widetilde{Ham}_{\lambda}(\Sigma_h^1, \partial \Sigma_h^1) = \widetilde{Ham}_{\lambda'}(\Sigma_h^1, \partial \Sigma_h^1).\]
By applying Lemma \ref{Cal_invariance} we conclude that $\widetilde{Cal}_{\lambda} = \widetilde{Cal}_{\psi^* \lambda}$. This then implies
\begin{align*}
\widetilde{Cal}_{\lambda}(\phi)  & = -\frac{1}{3} \int_{\Sigma_h^1} \phi^* \lambda \wedge \lambda = -\frac{1}{3} \int_{\Sigma_h^1} \phi^* (\psi^*\lambda) \wedge \psi^* (\lambda)\\
& = -\frac{1}{3} \int_{\Sigma_h^1} (\psi \phi \psi^{-1})^*\lambda \wedge \lambda = \widetilde{Cal}_{\lambda}(\psi \phi \psi^{-1}),
\end{align*}
which is the desired equivariance.
\end{proof}
We may now prove an analogue of Tsuboi's result for the extended Hamiltonian group. For the sake of notational expediency, we shall drop any explicit references to $\lambda$.
\begin{thm}\label{Hamiltonian_extended_Tsuboi}
Let $\pi_1(\Sigma_g) \stackrel{\psi} \rightarrow Diff_0(S^1)$ be a homomorphism and let $a_i,b_i$ be standard generators of $\pi_1(\Sigma_g)$. Let $f_i, h_i \in \widetilde{Ham}(\Sigma_h^1)$ be any extensions of $\psi(a_i), \psi(b_i)$ respectively and let $e(E)$ denote the Euler class of the total space of the $S^1$-bundle $E$ associated to $\psi$. Then 
\[-\pi^2 e(E) = Cal([f_1, h_1]...[f_g, h_g]).\]
In particular, if $\Sigma_h^1 \to E \to \Sigma_g$ is a flat bundle with holonomy in the extended Hamiltonian group, then the boundary is a trivial bundle.
\end{thm}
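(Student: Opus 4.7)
The strategy is to mirror the five-term-exact-sequence argument used for Theorem \ref{Tsuboi_five} and Proposition \ref{extended_Tsuboi}, but now for the extension
\[1 \to \widetilde{Ham}(\Sigma_h^1, \partial \Sigma_h^1) \to \widetilde{Ham}(\Sigma_h^1) \to Diff_0(S^1) \to 1.\]
Surjectivity of the restriction map is immediate from Lemma \ref{extend_volume}, since Hamiltonian extensions already suffice and $Ham(\Sigma_h^1)\subset\widetilde{Ham}(\Sigma_h^1)$. The exactness in the middle and on the left is tautological from the definitions of the extended Hamiltonian subgroups.

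The key point for setting up the connecting homomorphism is that $\widetilde{Cal}$ defines an element of $H^1(\widetilde{Ham}_\delta(\Sigma_h^1,\partial\Sigma_h^1),\mathbb{R})^{Diff_0(S^1)}$. The homomorphism property on the kernel follows from (\ref{Calabi_prop_1_extended}) because the cocycle correction term involves $\widetilde{Flux}_\lambda$, which vanishes on $\widetilde{Ham}(\Sigma_h^1,\partial\Sigma_h^1)$ by definition. The $Diff_0(S^1)$-invariance comes from Corollary \ref{Cal_equi}: the action of $Diff_0(S^1)$ on the kernel is induced by the conjugation action of $\widetilde{Ham}(\Sigma_h^1)$, and $\widetilde{Cal}$ is equivariant under that action. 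Applying the explicit description of the connecting homomorphism (Lemma \ref{connecting_hom} from Appendix \ref{App_five_term}) exactly as in the proof of Theorem \ref{Tsuboi_five}, any cycle $Z=\psi_*[\Sigma_g]\in H_2(Diff_{0,\delta}(S^1))$ satisfies
\[\delta[\widetilde{Cal}](Z)=\widetilde{Cal}\bigl([f_1,h_1]\cdots[f_g,h_g]\bigr)\]
for \emph{any} lifts $f_i,h_i\in\widetilde{Ham}(\Sigma_h^1)$ of $\psi(a_i),\psi(b_i)$. The left-hand side manifestly depends only on $Z$, so the right-hand side is automatically independent of the choice of lifts.

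To identify the class $\delta[\widetilde{Cal}]$, I would evaluate using the \emph{particular} Hamiltonian lifts $f_i,h_i$ supplied by Lemma \ref{extend_volume}, supported in a collar $K=[0,1)\times S^1$ of $\partial\Sigma_h^1$. On Hamiltonian diffeomorphisms the extended Calabi $\widetilde{Cal}$ coincides with the ordinary $Cal$ (both sides reduce to the standard integral, and the flux correction is absent). Embedding $K\hookrightarrow D^2$ with a suitable area form and regarding the $f_i,h_i$ as compactly supported Hamiltonian diffeomorphisms of $D^2$, Theorem \ref{Tsuboi_five} (equivalently Proposition \ref{extended_Tsuboi} in its disc form) gives $Cal([f_1,h_1]\cdots[f_g,h_g])=-\pi^2 e(E)$. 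Since this holds for every $Z$, we conclude $\delta[\widetilde{Cal}]=-\pi^2 e$ in $H^2(Diff_{0,\delta}(S^1),\mathbb{R})$, and substituting back into the formula of the previous paragraph yields the desired identity for \emph{arbitrary} extensions in $\widetilde{Ham}(\Sigma_h^1)$. The ``in particular'' statement then follows: if the whole holonomy $\rho:\pi_1(\Sigma_g)\to\widetilde{Ham}(\Sigma_h^1)$ lifts $\psi$, taking $f_i=\rho(a_i),\,h_i=\rho(b_i)$ forces $[f_1,h_1]\cdots[f_g,h_g]=\rho([a_1,b_1]\cdots[a_g,b_g])=Id$, hence $Cal=0$ and $e(E)=0$, so $\partial E$ is a trivial circle bundle.

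The main obstacle is the cohomological setup itself, namely confirming that $\widetilde{Cal}$ genuinely descends to a $Diff_0(S^1)$-invariant class in $H^1$ of the kernel; this is exactly what Corollary \ref{Cal_equi} (and Lemma \ref{Cal_invariance} behind it) was engineered to give. Everything else is a straightforward transfer of the disc-case argument, together with the observation that reducing to collar-supported Hamiltonian extensions places us inside the hypothesis of Proposition \ref{extended_Tsuboi}.
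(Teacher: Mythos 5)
Your proposal is correct and takes essentially the same route as the paper: it reads the formula off the connecting homomorphism of the five-term sequence for $1\to\widetilde{Ham}(\Sigma_h^1,\partial\Sigma_h^1)\to\widetilde{Ham}(\Sigma_h^1)\to Diff_0(S^1)\to 1$, uses Corollary \ref{Cal_equi} for the $Diff_0(S^1)$-invariance of $\widetilde{Cal}$, and pins down $\delta[\widetilde{Cal}]=-\pi^2 e$ by reducing to collar-supported Hamiltonian lifts, i.e.\ to Proposition \ref{extended_Tsuboi}. The only cosmetic difference is that the paper packages this last step as naturality of the five-term sequence (via $\iota^*\widetilde{Cal}=Cal$ and the commuting triangle), whereas you re-run the evaluation-on-cycles computation directly; the ingredients are identical.
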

\begin{proof}
We consider the commuting diagram
\[\xymatrix{1 \ar[r] & Ham(\Sigma_h^1, \partial \Sigma_h^1) \ar[r] \ar[d]& Ham(\Sigma_h^1)  \ar[r] \ar[d] & Diff_0(S^1)   \ar[r] \ar[d] & 1\\
1 \ar[r]& \widetilde{Ham}(\Sigma_h^1, \partial \Sigma_h^1)  \ar[r] & \widetilde{Ham}(\Sigma_h^1)  \ar[r] &  Diff_0(S^1)  \ar[r]  & 1.}\]
The five-term sequence then gives the following commuting triangle
\[\xymatrix{ H^1(Ham_{\delta}(\Sigma_h^1, \partial \Sigma_h^1), \mathbb{R})^{Diff_0(S^1)} \ar[r]^<<<<{\delta} & H^2(Diff_{0,\delta}(S^1), \mathbb{R})\\
H^1(\widetilde{Ham}_{\delta}(\Sigma_h^1, \partial \Sigma_h^1), \mathbb{R})^{Diff_0(S^1)} \ar[u] \ar[ur]^{\delta} &}.\]
By Corollary \ref{Cal_equi} we see that $\widetilde{Cal} \in H^1(\widetilde{Ham}_{\delta}(\Sigma_h^1, \partial \Sigma_h^1), \mathbb{R})^{Diff_0(S^1)}$. If $\iota$ denotes the inclusion of $ Ham(\Sigma_h^1, \partial \Sigma_h^1)$ into $\widetilde{Ham}(\Sigma_h^1, \partial \Sigma_h^1)$, then $\iota^* \widetilde{Cal} = Cal$ and, hence, $\delta(\widetilde{Cal}) = \delta(Cal)$ is also a multiple of the Euler class by Proposition \ref{extended_Tsuboi}.

The second statement follows from the exactness of the five-term sequence.
\end{proof}
A comparison of Theorem \ref{symp_flat_extension} and Theorem \ref{Hamiltonian_extended_Tsuboi} exhibits a stark difference between the two groups $Symp(\Sigma_h^1)$ and $\widetilde{Ham}(\Sigma_h^1)$.

\section{The Calabi map and the first MMM-class}\label{MMM_first}
We have seen that the Euler class of the boundary of a surface bundle with one boundary component can be interpreted as the image of the Calabi map under the connecting homomorphism of a certain five-term exact sequence. We shall give a similar construction for the first Mumford-Miller-Morita (MMM) class $e_1$, which represents a generator of $H^2(\Gamma^1_h, \mathbb{R}) \cong \mathbb{R}$ for $h \geq 3$. We recall that the first MMM-class of a surface bundle over a surface is up to a constant just the signature of the total space.

In order to describe the first MMM-class in terms of a five-term exact sequence we shall need to consider compactly supported extended Hamiltonian groups. We first define an extended Flux homomorphism $\widetilde{Flux}_c$ on $Symp^c(\Sigma_h^1) \subset Symp(\Sigma_h^1)$ by restricting the map $\widetilde{Flux}_{\lambda}$ defined in Section \ref{Flat_Ham}. There are other possible extensions of $Flux$ to crossed homomorphisms, but on the the level of group cohomology these can be easily described and the following is a slight variant of (\cite{KM2}, Theorem 11).
\begin{lem}\label{Flux_class}
Let $\widetilde{Flux}$ be an extended Flux homomorphism on $Symp^c(\Sigma_h^1)$, then the following holds in  $H^1(Symp^c(\Sigma_h^1), H_c^1(\Sigma_h^1, \mathbb{R}))$
\[[\widetilde{Flux}] = [\widetilde{Flux}_c] +  a[p^*k_{\mathbb{R}}],\]
where $a \in \mathbb{R}$ and $k_{\mathbb{R}} \in H^1(\Gamma_h^1, H_c^1(\Sigma_h^1, \mathbb{R})) \cong \mathbb{R}$ is the generator defined by the extended Johnson homomorphism of Morita.
\end{lem}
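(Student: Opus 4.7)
The plan is to reduce this to Morita's computation of $H^1(\Gamma_h^1, H_c^1(\Sigma_h^1, \mathbb{R})) \cong \mathbb{R}$ via the five-term exact sequence applied to
\[1 \to Symp_0^c(\Sigma_h^1) \to Symp^c(\Sigma_h^1) \stackrel{p} \rightarrow \Gamma_h^1 \to 1.\]

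First, I would observe that both $\widetilde{Flux}$ and $\widetilde{Flux}_c$ are, by definition of an \emph{extended} Flux homomorphism, crossed homomorphisms whose restriction to $Symp^c_0(\Sigma_h^1)$ coincides with the classical Flux. Hence their difference $c := \widetilde{Flux} - \widetilde{Flux}_c$ is a crossed homomorphism on $Symp^c(\Sigma_h^1)$ that \emph{vanishes as a cocycle} on $Symp^c_0(\Sigma_h^1)$. Moreover, the coefficient module $H_c^1(\Sigma_h^1, \mathbb{R})$ is acted on through pullback, so $Symp_0^c(\Sigma_h^1)$ acts trivially on it; in particular
\[H_c^1(\Sigma_h^1, \mathbb{R})^{Symp_0^c(\Sigma_h^1)} = H_c^1(\Sigma_h^1, \mathbb{R}).\]

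Next, I would invoke the Lyndon--Hochschild--Serre five-term exact sequence for the extension above with coefficients in $H_c^1(\Sigma_h^1, \mathbb{R})$, which in view of the triviality of the $Symp^c_0(\Sigma_h^1)$-action reads
\[0 \to H^1(\Gamma_h^1, H_c^1(\Sigma_h^1, \mathbb{R})) \stackrel{p^*} \rightarrow H^1(Symp^c_\delta(\Sigma_h^1), H_c^1(\Sigma_h^1, \mathbb{R})) \stackrel{r} \rightarrow H^1(Symp^c_{0,\delta}(\Sigma_h^1), H_c^1(\Sigma_h^1, \mathbb{R}))^{\Gamma_h^1}.\]
Since $c$ vanishes as a cocycle on $Symp_0^c(\Sigma_h^1)$, one has $r([c]) = 0$, hence $[c]$ lies in the image of $p^*$. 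By Morita's theorem the left-hand group is one-dimensional, generated by the extended Johnson homomorphism class $k_{\mathbb{R}}$, so $[c] = a \thinspace p^*[k_{\mathbb{R}}]$ for some $a \in \mathbb{R}$, which is the desired identity.

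The main technical point to verify carefully is the triviality of the $Symp_0^c(\Sigma_h^1)$-action on $H_c^1(\Sigma_h^1, \mathbb{R})$, which is what makes the five-term sequence take the simplified form above and thus reduces the classification of extensions of $Flux$ to Morita's computation. Everything else is formal: one needs only that the difference of two crossed homomorphism extensions restricts to zero on the identity component, and that the projection $p$ induces the injection $p^*$ in the sequence.
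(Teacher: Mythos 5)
Your argument is correct and is essentially the paper's: the paper also forms the difference $\Delta=\widetilde{Flux}-\widetilde{Flux}_c$, checks on the level of cochains that it vanishes on $Symp_0^c(\Sigma_h^1)$ and is constant on its cosets, so that it is pulled back from a class in $H^1(\Gamma_h^1,H_c^1(\Sigma_h^1,\mathbb{R}))\cong\mathbb{R}\cdot[k_{\mathbb{R}}]$ by Morita's computation. The only cosmetic difference is that you package the descent as exactness of the twisted-coefficient five-term sequence (together with the triviality of the $Symp_0^c$-action on $H_c^1$), whereas the paper verifies the descent of the crossed homomorphism directly; both steps are equivalent.
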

\begin{proof}
We let $\Delta = \widetilde{Flux} - \widetilde{Flux}_c$. For $\phi \in Symp_0^c(\Sigma_h^1)$ and $\psi \in Symp^c(\Sigma_h^1)$ we see that on the level of cochains
\begin{align*}
\Delta(\phi.\psi) & = [\widetilde{Flux} - \widetilde{Flux}_c](\phi.\psi)\\
& = [\widetilde{Flux}(\phi) -  \widetilde{Flux}_c(\phi)] + (\phi^{-1})^*[\widetilde{Flux}(\psi) -  \widetilde{Flux}_c(\psi)]\\
& = [Flux(\phi) -  Flux(\phi)] + [\widetilde{Flux}(\psi) -  \widetilde{Flux}_c(\psi)]\\
& = [\widetilde{Flux}(\psi) -  \widetilde{Flux}_c(\psi)]  = \Delta(\psi).
\end{align*}
Moreover, $\Delta$ vanishes on $Symp_0^c(\Sigma_h^1)$ by definition and is coclosed, hence $[\Delta] = p^* [\beta]$ for some $[\beta] \in H^1(\Gamma^1_h, H_1(\Sigma_h^1, \mathbb{R}))$. Now this group is isomorphic to $\mathbb{R}$ and is generated by the extended Johnson homomorphism of Morita (cf.\ \cite{KM2} and \cite{Mor2}).
\end{proof}
\noindent In view of Lemma \ref{Flux_class} we set 
\[\widetilde{Flux_a} = \widetilde{Flux}_c + a \thinspace p^* k_{\mathbb{R}}, \text{ for any } a \in \mathbb{R}.\]
The kernel of $\widetilde{Flux_a}$ will then be denoted by $\widetilde{Ham_a^c}(\Sigma_h^1)$ and by considering the projection to the mapping class group we obtain the following extension of groups
\begin{equation*}
1 \to Ham^c(\Sigma_h^1) \to \widetilde{Ham^c}(\Sigma_h^1) \stackrel{p} \rightarrow \Gamma_h^1 \to 1.
\end{equation*}
For any group $G$ there is a pairing $H^1(G, H_c^1(\Sigma_h^1, \mathbb{R})) \times H^1(G, H_c^1(\Sigma_h^1, \mathbb{R})) \to H^2(G, \mathbb{R})$, which we denote $[\alpha.\beta]$ for classes $[\alpha], [\beta] \in H^1(G, H_c^1(\Sigma_h^1, \mathbb{R}))$. This is defined via the following formula
\[\alpha.\beta(\phi,\psi) = \alpha(\phi) \wedge (\phi^{-1})^*\beta(\psi).\]
The induced map on cohomology is well-defined independently of the chosen representatives, and is natural with respect to pullbacks (cf.\ \cite{KM2}, Lemma 18). With these preliminaries we may now prove the following theorem.
\begin{thm}\label{MMM_Cal}
Let $h \geq 2$ and let 
\[1 \to Ham^c(\Sigma_h^1) \to \widetilde{Ham_a^c}(\Sigma_h^1) \stackrel{p} \rightarrow \Gamma_h^1 \to 1\]
be the extension associated to the extended Hamiltonian group defined by the extended flux map $\widetilde{Flux_a}$.
Then the image of $[Cal]$ under the connecting homomorphism of the associated five-term exact sequence is $ \frac{1}{3}a^2 e_1$. In particular, if $a$ is non-zero, then any flat bundle with holonomy in $\widetilde{Ham_a^c}(\Sigma_h^1)$  has signature zero.
\end{thm}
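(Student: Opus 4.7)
The plan is to make the connecting homomorphism in the five-term exact sequence explicit by choosing a set-theoretic lift of $Cal$ and computing its coboundary. The natural choice is the compactly supported version of the extended Calabi map $\widetilde{Cal}$ from Section \ref{Flat_Ham}, which restricts to $Cal$ on $Ham^c(\Sigma_h^1)$. Before starting, I would verify that $[Cal]$ lies in the invariant subspace $H^1(Ham_\delta^c(\Sigma_h^1), \mathbb{R})^{\Gamma_h^1}$, which amounts to invariance under conjugation by any element of $\widetilde{Ham_a^c}(\Sigma_h^1)$; since $\widetilde{Ham_a^c}(\Sigma_h^1) \subset Symp^c(\Sigma_h^1)$, this follows exactly as in Lemma \ref{Cal_invariance} and Corollary \ref{Cal_equi}.

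Using Lemma \ref{connecting_hom}, evaluating $\delta [Cal]$ on a cycle $Z \in H_2(\Gamma_h^1)$ reduces to applying $Cal$ to the boundary of a lift of $Z$ into the chain complex of $\widetilde{Ham_a^c}(\Sigma_h^1)$. The failure of $\widetilde{Cal}$ to be a homomorphism is governed by formula (\ref{Calabi_prop_1_extended}):
\[\widetilde{Cal}(\phi\psi) - \widetilde{Cal}(\phi) - \widetilde{Cal}(\psi) = \tfrac{1}{3}\thinspace \widetilde{Flux}_c(\phi) \wedge (\phi^{-1})^*\widetilde{Flux}_c(\psi).\]
On $\widetilde{Ham_a^c}(\Sigma_h^1)$ the defining identity $\widetilde{Flux_a} = \widetilde{Flux}_c + a \thinspace p^* k_\mathbb{R} = 0$ gives $\widetilde{Flux}_c = -a \thinspace p^* k_\mathbb{R}$, so the above coboundary becomes
\[\tfrac{1}{3}\thinspace a^2 \thinspace p^* k_\mathbb{R}(\phi) \wedge (\phi^{-1})^* p^* k_\mathbb{R}(\psi),\]
which vanishes on $Ham^c(\Sigma_h^1)$ and descends to a 2-cocycle on $\Gamma_h^1$. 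Up to sign, this represents the class $\tfrac{1}{3} a^2 \thinspace [k_\mathbb{R}\cdot k_\mathbb{R}] \in H^2(\Gamma_h^1, \mathbb{R})$, where the pairing on coefficients is the intersection form on $H_c^1(\Sigma_h^1, \mathbb{R})$.

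The main obstacle is identifying $[k_\mathbb{R} \cdot k_\mathbb{R}]$ with the first MMM-class $e_1$. For this I would invoke Morita's formula expressing $e_1$ (up to a computable constant) as the self-pairing of the extended Johnson homomorphism, cf.\ \cite{Mor2} and the discussion in \cite{KM2}. Careful bookkeeping of the constants appearing in the cocycle identity, the connecting homomorphism and this identification should yield exactly $\delta[Cal] = \tfrac{1}{3}a^2 e_1$.

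The concluding statement is then a formal consequence of exactness of the five-term sequence. Whenever $a \ne 0$, the class $p^* e_1 \in H^2(\widetilde{Ham_a^c}(\Sigma_h^1)_\delta, \mathbb{R})$ is (a non-zero rescaling of) $p^* \delta [Cal]$, hence vanishes. A flat bundle $\Sigma_h^1 \to E \to \Sigma_g$ with holonomy $\rho: \pi_1(\Sigma_g) \to \widetilde{Ham_a^c}(\Sigma_h^1)$ has signature proportional to $\langle (p \circ \rho)^* e_1, [\Sigma_g] \rangle$, which is forced to be zero.
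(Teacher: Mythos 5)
Your proposal is correct and follows essentially the same route as the paper: lift $Cal$ to the extended Calabi map, use the cocycle identity (\ref{Calabi_prop_1_extended}) together with $\widetilde{Flux}_c = -a\, p^*k_{\mathbb{R}}$ on $\widetilde{Ham_a^c}(\Sigma_h^1)$ to make the connecting homomorphism explicit, identify the resulting class with the self-pairing of the extended Johnson homomorphism, and conclude the signature statement by exactness. The only difference is that the paper carries out the sign bookkeeping you defer, obtaining $\delta[Cal] = -\tfrac{1}{3}a^2[k_{\mathbb{R}}.k_{\mathbb{R}}]$ and then $\tfrac{1}{3}a^2 e_1$ from Morita's identity $[k_{\mathbb{R}}.k_{\mathbb{R}}] = -e_1$.
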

\begin{proof}
We consider the following part of the five-term exact sequence associated to the extended Hamiltonian group
\[H^1(\widetilde{Ham_{a,\delta}^c}(\Sigma_h^1),\mathbb{R}) \to H^1(Ham_{\delta}^c(\Sigma_h^1),\mathbb{R})^{\Gamma_h^1} \stackrel{\delta} \rightarrow H^2(\Gamma_h^1, \mathbb{R}).\]
We first claim that the Calabi map lies in the invariant part of $H^1(Ham^c(\Sigma_h^1),\mathbb{R})$. For if $\lambda$ is a primitive for the symplectic form $\omega$ on $\Sigma_h^1$, then so is $\psi^* \lambda$ for any $\psi \in \widetilde{Ham_a^c}(\Sigma_h^1)$. Thus, since the Calabi map is independent of the choice of primitive for any $\phi \in Ham^c(\Sigma_h^1)$, we compute that
\begin{align*}
Cal(\phi) & = -\frac{1}{3} \int_{\Sigma_h^1} \phi^* \lambda \wedge \lambda = -\frac{1}{3} \int_{\Sigma_h^1} \phi^*( \psi^* \lambda) \wedge (\psi^*\lambda)\\
& =  -\frac{1}{3} \int_{\Sigma_h^1}(\psi \phi \psi^{-1})^* \lambda \wedge \lambda = Cal(\psi\phi\psi^{-1}),
\end{align*}
and $[Cal]$  lies in $H^1(Ham^c(\Sigma_h^1),\mathbb{R})^{\Gamma_h^1}$ as claimed. 

If $i$ denotes the inclusion $\widetilde{Ham_a^c}(\Sigma_h^1) \hookrightarrow Symp^c(\Sigma_h^1)$, then by definition $\widetilde{Flux_a}$ vanishes on $\widetilde{Ham^c_a}(\Sigma_h^1)$ and we see that $i^*[\widetilde{Flux}_c]  = - a  . i^*(p^* [k_{\mathbb{R}}])$. Let $\widetilde{f} = \widetilde{Cal}_{\lambda}$ and note that by formula (\ref{Calabi_prop_1_extended}) this map satisfies the hypotheses of Lemma \ref{connecting_hom2}. Thus we have an explicit description of the connecting homomorphism in terms of $\widetilde{Cal}_{\lambda}$. More precisely,  let $\overline{\phi},\overline{\psi} \in \Gamma_h^1$ be considered as elements of the quotient, then we compute
\begin{align*}
\delta(Cal)(\overline{\phi},\overline{\psi}) & = \widetilde{Cal}_{\lambda}(\phi) + \widetilde{Cal}_{\lambda}(\psi) -  \widetilde{Cal}_{\lambda}(\phi.\psi)\\
& = - \frac{1}{3} \widetilde{Flux}_c(\phi) \wedge (\phi^{-1})^*\widetilde{Flux}_c(\psi)\\
& = - \frac{1}{3} a . i^*(p^* k_{\mathbb{R}})(\phi) \wedge (\phi^{-1})^*a . i^*(p^* k_{\mathbb{R}})(\psi)\\
& = - \frac{1}{3} a^2  k_{\mathbb{R}}(\overline{\phi}) \wedge (\phi^{-1})^*k_{\mathbb{R}}(\overline{\psi}).
\end{align*}
Hence we have shown that $[\delta(Cal)] = -\frac{1}{3} a^2 [k_{\mathbb{R}}.k_{\mathbb{R}}]$. Now we know by (\cite{Mor1}, Proposition 4.2) that $[k_{\mathbb{R}}.k_{\mathbb{R}}] = - e_1$, where $e_1$ is the first MMM-class, and we conclude that $\delta[Cal] =  \frac{1}{3}a^2 e_1$. The second claim follows by the exactness of the five-term exact sequence.
\end{proof}
We may now give an interpretation of the signature of certain surface bundles in terms of the Calabi map of commutators lying in the kernel of a particular extended Flux homomorphism. Specifically, we let $\widetilde{Flux}$ be the pullback of the extended flux map on $Symp(\Sigma_h)$ under the inclusion $Symp^c(\Sigma^1_h) \hookrightarrow Symp(\Sigma_h)$. By Theorem 12 of \cite{KM2} we know that $[\widetilde{Flux}] = [\widetilde{Flux}_c] + p^* [k_{\mathbb{R}}]$. Then as a consequence of Theorem \ref{MMM_Cal} and the calculations in the proof of Theorem \ref{Tsuboi_five} we obtain the following corollary.
\begin{cor}\label{Tsuboi_first_MMM}
Let $\Sigma^1_h \to E \to \Sigma_g$ be a bundle with holonomy representation $\pi_1(\Sigma_g) \stackrel{\rho} \rightarrow \Gamma_h^1$ and assume $h \geq 2$. Furthermore, let $\alpha_i = \rho(a_i)$ and $\beta_i = \rho(b_i)$ be the images of standard generators $a_i,b_i$ of $\pi_1(\Sigma_g)$. Then for any lifts $\phi_i, \psi_i \in \widetilde{Ham_1^c}(\Sigma_h^1)$ of $\alpha_i, \beta_i$ the signature satisfies
\[\sigma(E) = Cal([\phi_1, \psi_1]...[\phi_g, \psi_g ]).\]
\end{cor}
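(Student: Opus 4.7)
The plan is to specialize Theorem \ref{MMM_Cal} to the parameter $a=1$ and then evaluate the resulting identity $\delta[Cal] = \frac{1}{3} e_1$ on the fundamental class of the base, unwinding the connecting homomorphism at the cocycle level exactly as was done in the proof of Theorem \ref{Tsuboi_five}.

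First I would fix the normalizations. The first MMM-class is related to the signature of a surface bundle $\Sigma_h^1 \to E \to \Sigma_g$ by the Atiyah--Meyer formula $\sigma(E) = \frac{1}{3}\langle \rho^* e_1, [\Sigma_g]\rangle$, where $\rho: \pi_1(\Sigma_g) \to \Gamma_h^1$ is the holonomy representation classifying $E$. Equivalently, $\langle e_1, \rho_*[\Sigma_g]\rangle = 3\sigma(E)$ in $H_2(\Gamma_h^1, \mathbb{Z})$.

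Next I would apply Theorem \ref{MMM_Cal} to the extension
\[1 \to Ham^c(\Sigma_h^1) \to \widetilde{Ham_1^c}(\Sigma_h^1) \stackrel{p}\rightarrow \Gamma_h^1 \to 1\]
with $a=1$, obtaining $\delta[Cal] = \frac{1}{3} e_1$ in $H^2(\Gamma_h^1, \mathbb{R})$. Pairing both sides with $\rho_*[\Sigma_g]$ gives exactly $\sigma(E)$ on the right.

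For the left-hand side I would invoke the explicit description of the connecting homomorphism from Lemma \ref{connecting_hom}, applied to the same representative of the fundamental cycle of $\pi_1(\Sigma_g)$ that appears in the proof of Theorem \ref{Tsuboi_five}. Given any lifts $\phi_i, \psi_i \in \widetilde{Ham_1^c}(\Sigma_h^1)$ of $\alpha_i, \beta_i$, one forms the lifted group chain $\tilde z$ and computes
\[\partial \tilde z = \bigl([\phi_1,\psi_1]\ldots[\phi_g,\psi_g]\bigr) - (e).\]
Since $\prod [a_i, b_i] = 1$ in $\pi_1(\Sigma_g)$, the commutator $\prod [\phi_i, \psi_i]$ lies in the kernel $Ham^c(\Sigma_h^1)$ of $p$, so evaluating any set-theoretic extension $Cal_S$ of $Cal$ on $\partial \tilde z$ yields $Cal([\phi_1,\psi_1]\ldots[\phi_g,\psi_g])$. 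Equating the two evaluations of $\delta[Cal](\rho_*[\Sigma_g])$ then gives the desired formula.

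I do not foresee a serious obstacle here: the corollary is essentially a restatement of Theorem \ref{MMM_Cal} at the cocycle level via the explicit signature-to-$e_1$ relation. The only point requiring mild care is keeping the scalars $\frac{1}{3}$ and $a=1$ aligned so that the final formula has no extraneous constant, and checking that the lift-by-lift computation of $\delta$ produces the product of commutators independently of the chosen set-theoretic extension of $Cal$.
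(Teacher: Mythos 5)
Your proposal is correct and follows essentially the same route as the paper, which obtains the corollary precisely by specializing Theorem \ref{MMM_Cal} to $a=1$ and reusing the explicit 2-cycle and connecting-homomorphism computation from the proof of Theorem \ref{Tsuboi_five}, together with the relation $\sigma(E)=\tfrac{1}{3}\langle \rho^*e_1,[\Sigma_g]\rangle$. No gaps beyond the normalization bookkeeping you already flag.
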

We contrast Corollary \ref{Tsuboi_first_MMM} with results of \cite{KM}, where it is shown that there exists $Symp^c(\Sigma_g^1)$-bundles with non-zero signature. In particular, we see that those bundles can not have holonomy in the subgroup $\widetilde{Ham_1^c}(\Sigma_g^1)$.

\appendix
\section{Five-term exact sequences}\label{App_five_term}
To any extension of groups $1 \to  N  \to  G \to Q \to  1$ one may associate a five-term exact sequence is group cohomology of the following form:
\[\xymatrix{ 1 \ar[r]  & H^1(Q,R)  \ar[r]  & H^1(G,R) \ar[r]  & H^1(N,R)^Q \ar[r]^{\delta} & H^2(Q,R)\ar[r] & H^2(G,R)}.\]
This exact sequence is generally derived by means of the Hochschild-Serre spectral sequence, but we choose to give an alternate description in order to obtain an explicit formula for the connecting homomorphism. It can be shown that the connecting homomorphism that one obtains in this way agrees with the usual one up to sign, at least for cohomology with real coefficients, but for a detailed account of this and the results below we refer to (\cite{Bow2}, Appendix A).
\begin{lem}\label{connecting_hom}
Let $1 \to N \to G \stackrel{\pi} \rightarrow Q \to 1$ be an extension of groups and let $S$ denote a normalised set-theoretic section of the final map so that $s(e.N) = e$. Further let $\phi \in H^1(N, R)^Q$ lie in the invariant part of $H^1(N, R)$ for any coefficient ring $R$. Define
\[\phi_S(g) = \phi(n_g) + f(s(N.g))\]
where $n_g \in N$ is the unique element such that $g = n_g.s(N.g)$ and $f$ is any function on the set of coset representatives determined by $s$. 

Then the map $\delta: H^1(N, R) \to H^2(Q,R)$ defined by $\delta \phi = \pi(\delta \phi_S)$ is well-defined and the five-term sequence is exact with $\delta$ as the connecting homomorphism. Furthermore if $\frac{1}{2} \in R$, then we may assume that $\phi_S(g^{-1}) = - \phi_S(g)$.
\end{lem}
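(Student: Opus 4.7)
The plan is to build the connecting homomorphism by directly descending the coboundary of the 1-cochain $\phi_S$ on $G$ to a 2-cochain on $Q$, and then reading off the exactness of the five-term sequence from the resulting explicit formula. First I would check that $\delta\phi_S(g_1, g_2) = \phi_S(g_1) + \phi_S(g_2) - \phi_S(g_1 g_2)$ depends only on the cosets $\pi(g_1), \pi(g_2)$. Writing $g_i = n_{g_i} s_i$ with $s_i = s(N.g_i)$, normality of $N$ gives
\[
g_1 g_2 = n_{g_1} \cdot (s_1 n_{g_2} s_1^{-1}) \cdot (s_1 s_2 \, s_{g_1 g_2}^{-1}) \cdot s_{g_1 g_2},
\]
so that $n_{g_1 g_2} = n_{g_1} \cdot (s_1 n_{g_2} s_1^{-1}) \cdot (s_1 s_2 \, s_{g_1 g_2}^{-1})$. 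Since $\phi \colon N \to R$ is a homomorphism (for trivial coefficients, and with the obvious adjustment otherwise), expanding $\phi(n_{g_1 g_2})$ and invoking the $Q$-invariance $\phi(s_1 n_{g_2} s_1^{-1}) = \phi(n_{g_2})$ cancels the $n_{g_i}$-terms between $\phi_S(g_1 g_2)$ and $\phi_S(g_1) + \phi_S(g_2)$, leaving an expression depending only on $s_1, s_2, s_{g_1 g_2}$ and hence only on $\pi(g_1), \pi(g_2)$. The descended 2-cochain on $Q$ is automatically a cocycle since $\delta\phi_S$ is one on $G$, and defines $\delta\phi \in H^2(Q, R)$. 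Altering the auxiliary function $f$ or the section $s$ changes $\phi_S$ by a cochain pulled back from $Q$, so the class of $\pi_* \delta \phi_S$ is unchanged.

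Exactness of the five-term sequence then follows from this explicit description. Injectivity of $H^1(Q,R) \to H^1(G,R)$ is immediate from surjectivity of $\pi$. Exactness at $H^1(G,R)$ is the standard observation that a 1-cocycle on $G$ is pulled back from $Q$ iff it restricts to zero on $N$. Exactness at $H^1(N,R)^Q$ amounts to $\delta\phi = 0$ iff $f$ can be chosen so that $\phi_S$ is itself a homomorphism $G \to R$, i.e.\ iff $\phi$ extends to $G$. Exactness at $H^2(Q,R)$ follows because if $\pi^*\omega = \delta\eta$ for some 1-cochain $\eta$ on $G$, then $\eta|_N$ is a $Q$-invariant 1-cocycle on $N$ whose image under $\delta$ represents $[\omega]$.

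For the antisymmetry claim when $\tfrac{1}{2} \in R$, set $\tilde\phi_S(g) = \tfrac{1}{2}(\phi_S(g) - \phi_S(g^{-1}))$; a coset computation analogous to the one above, using $\phi(n^{-1}) = -\phi(n)$ and once more the $Q$-invariance of $\phi$, shows that $\phi_S(g) + \phi_S(g^{-1})$ depends only on $\pi(g)$, so $\tilde\phi_S - \phi_S$ is pulled back from $Q$ and $\pi_* \delta \tilde\phi_S$ represents the same cohomology class as $\pi_* \delta \phi_S$. The main obstacle is the descent computation of the first paragraph: one must verify that the $Q$-invariance hypothesis is precisely what makes the $N$-dependent terms cancel. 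Once this is in hand, the remainder is formal manipulation, modulo a careful tracking of signs, since the connecting homomorphism constructed in this way may differ by a sign from the one arising from the Hochschild--Serre spectral sequence.
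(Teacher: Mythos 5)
Your argument is correct: the descent computation (using the decomposition $n_{g_1g_2}=n_{g_1}\cdot(s_1 n_{g_2}s_1^{-1})\cdot(s_1s_2s_{g_1g_2}^{-1})$ and the $Q$-invariance of $\phi$ to cancel the $N$-dependent terms), the independence of choices, the exactness checks, and the antisymmetrisation via $\tfrac12(\phi_S(g)-\phi_S(g^{-1}))$ all go through, and this cocycle-level construction is exactly the route the lemma's statement encodes. Note that the paper itself gives no proof here, deferring to Appendix A of the author's thesis \cite{Bow2}, so your write-up is essentially the intended argument; the only places needing a touch more care are the routine normalisation details in the exactness-at-$H^2(Q,R)$ step (normalising $\omega$ so that $\eta|_N$ is a homomorphism and $\eta$ is a legitimate choice of $\phi_S$), which you state but do not verify.
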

The five-term exact sequence is natural in the following sense.
\begin{lem}\label{bar_delta_nat}
Consider the following commuting diagram of group extensions
\[\xymatrix{ 1 \ar[r] & N  \ar[r] & G \ar[r]& Q \ar[r]  & 1\\
1 \ar[r] & N'  \ar[r] \ar[u] & G' \ar[r] \ar[u]& Q \ar[r] \ar[u]& 1.}\]
Then there is a commutative diagram of five-term exact sequences:
\[\xymatrix{   H^1(Q,R)  \ar[r] \ar[d]& H^1(G,R) \ar[r] \ar[d]   & H^1(N,R)^Q \ar[r]^{\delta}  \ar[d] & H^2(Q,R)\ar[r] \ar[d] & H^2(G,R) \ar[d]\\
H^1(Q,R)  \ar[r] & H^1(G',R) \ar[r]  & H^1(N',R)^{Q} \ar[r]^{\delta} & H^2(Q,R) \ar[r] & H^2(G',R).}\]
\end{lem}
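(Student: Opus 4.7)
The naturality diagram decomposes into four commuting squares. Three of these --- those involving $H^1(Q,R)\to H^1(G,R)$, $H^1(G,R)\to H^1(N,R)^Q$, and $H^2(Q,R)\to H^2(G,R)$ --- are immediate from ordinary functoriality of group cohomology applied to the vertical maps of the two extensions (together with the fact that the induced map $Q\to Q$ is the identity). My plan is therefore to focus on the only substantive square, the one involving the two connecting homomorphisms $\delta$.

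For that square I would use the explicit cocycle description of $\delta$ supplied by Lemma \ref{connecting_hom}. Choose normalised set-theoretic sections $s: Q\to G$ and $s': Q\to G'$, with associated 2-cocycles $n(q_1,q_2)\in N$ and $n'(q_1,q_2)\in N'$ defined by $s(q_1)s(q_2)=n(q_1,q_2)\cdot s(q_1q_2)$, and analogously for $s'$. Write $\phi: G'\to G$ and $\psi: N'\to N$ for the vertical maps. Taking the auxiliary function $f$ of Lemma \ref{connecting_hom} to be zero and evaluating the coboundary of $\alpha_s$ on the pair $(s(q_1),s(q_2))$, a routine cochain computation yields
\[\delta_G(\alpha)(q_1,q_2)=-\alpha(n(q_1,q_2)),\qquad \delta_{G'}(\psi^*\alpha)(q_1,q_2)=-\alpha(\psi(n'(q_1,q_2))).\]

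The heart of the argument is then to compare these two 2-cocycles on $Q$ modulo a coboundary. Since both $s$ and $\phi\circ s'$ are set-theoretic sections of $G\to Q$, there is a normalised function $\sigma: Q\to N$ with $\phi(s'(q))=\sigma(q)\cdot s(q)$. Applying $\phi$ to the cocycle identity for $s'$ and using normality of $N$ in $G$ produces an identity in $N$ of the form
\[\psi(n'(q_1,q_2))\cdot\sigma(q_1q_2)=\sigma(q_1)\cdot s(q_1)\sigma(q_2)s(q_1)^{-1}\cdot n(q_1,q_2).\]
Applying the $Q$-invariant homomorphism $\alpha$, the conjugation by $s(q_1)$ disappears, and rearrangement shows that $\alpha(\psi(n'(q_1,q_2)))-\alpha(n(q_1,q_2))$ is precisely the coboundary on $Q$ of the $R$-valued $1$-cochain $q\mapsto\alpha(\sigma(q))$. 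Consequently the two $2$-cocycles agree in $H^2(Q,R)$, and the $\delta$-square commutes.

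The one mild subtlety to watch is the choice-dependence built into Lemma \ref{connecting_hom}: different sections $s,s'$ or nontrivial choices of the auxiliary function $f$ alter the representative $2$-cocycles only by coboundaries on $Q$, so they leave the cohomology classes unchanged and the equality above descends unambiguously to $H^2(Q,R)$. This is the point I expect to require the most care to write out cleanly, but conceptually it is just the statement that $\delta_G(\alpha)$ and $\delta_{G'}(\psi^*\alpha)$ are well-defined independently of all auxiliary choices.
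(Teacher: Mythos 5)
Your proof is correct: the three functoriality squares are indeed immediate, and your cocycle-level comparison of the two connecting homomorphisms via the section-comparison function $\sigma\colon Q\to N$ is exactly the computation that the explicit description of $\delta$ in Lemma \ref{connecting_hom} is set up to enable. The paper itself gives no proof of this lemma (it defers to Appendix A of \cite{Bow2}), so there is nothing to contrast; your argument is the standard one and fills the gap cleanly, including the correct observation that all choices of sections and of the auxiliary function $f$ only change the representative $2$-cocycles by coboundaries.
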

There is a slightly different formulation of Lemma \ref{connecting_hom} that is useful in performing calculations.
\begin{lem}\label{connecting_hom2}
Let $1 \to N \to G\rightarrow Q \to 1$ be an extension of groups and let $f \in H^1(N,R)^Q$. Further let $\widetilde{f}$ be an extension of $f$ to $G$ such that $\widetilde{f}(n.g) = \widetilde{f}(n) + \widetilde{f}(g)$ for all $n \in N$ and $g \in G$. Then for any $[g_1],[g_2] \in Q$ in the quotient, there is a representative cocycle for $\delta \phi \in H^2(Q,R)$ such that
\[\delta f ([g_1],[g_2]) = \widetilde{f}(g_1) + \widetilde{f}(g_2) - \widetilde{f}(g_1.g_2).\]
\end{lem}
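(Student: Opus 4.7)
The plan is to recognize $\widetilde{f}$ as a particular instance of the set-theoretic extension $\phi_S$ appearing in Lemma \ref{connecting_hom}, and then read off the claimed formula as the image of its coboundary under the projection $G \to Q$. First, fix a normalized set-theoretic section $s: Q \to G$ with $s([e]) = e$, so that every $g \in G$ admits a unique decomposition $g = n_g \cdot s([g])$ with $n_g \in N$. Iterating the hypothesis $\widetilde{f}(n g) = \widetilde{f}(n) + \widetilde{f}(g)$ (and using that $\widetilde{f}|_N = f$ is a homomorphism, which follows by taking $g \in N$) yields
\[ \widetilde{f}(g) = f(n_g) + \widetilde{f}(s([g])), \]
which is exactly the form of $\phi_S$ in Lemma \ref{connecting_hom} when the auxiliary function on coset representatives is taken to be $\widetilde{f} \circ s$. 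Thus Lemma \ref{connecting_hom} applies and represents $\delta[f] \in H^2(Q,R)$ as the push-down $\pi(\delta \widetilde{f})$ of the 1-coboundary of $\widetilde{f}$ viewed as a cochain on $G$.

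Since $R$ carries the trivial $G$-action, the standard non-homogeneous coboundary formula gives
\[ \delta \widetilde{f}(g_1, g_2) = \widetilde{f}(g_1) + \widetilde{f}(g_2) - \widetilde{f}(g_1 g_2), \]
which is precisely the expression in the statement. It remains to verify that this 2-cocycle on $G \times G$ descends to a well-defined 2-cocycle on $Q \times Q$ representing $\delta[f]$. This is part of the content of Lemma \ref{connecting_hom}, but one can also check it directly: substituting $g_i \mapsto n_i g_i$ for $n_i \in N$ and using left-$N$-equivariance of $\widetilde{f}$, additivity of $\widetilde{f}$ on $N$, and normality of $N$ to rewrite $g_1 n_2 = (g_1 n_2 g_1^{-1}) g_1$, one finds that the value changes only by a term of the shape $f(n) - f(g n g^{-1})$.

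The main obstacle is precisely this last step: the residual discrepancy vanishes only modulo a coboundary, since $f$ is only $Q$-invariant as a cohomology class and not as a cochain. The cleanest way around this is to invoke Lemma \ref{connecting_hom} directly, which packages the descent into its proof. Alternatively, one can \emph{define} the representative cocycle on $Q \times Q$ by applying the formula to the canonical lifts $s([g_i])$, and then observe via the calculation above that an arbitrary pair of lifts $g_1, g_2$ yields a cocycle cohomologous to this one, so that the formula $\widetilde{f}(g_1) + \widetilde{f}(g_2) - \widetilde{f}(g_1 g_2)$ does furnish a representative of $\delta[f]$ as required.
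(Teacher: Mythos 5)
Your argument is correct and is essentially the derivation the paper intends: the appendix presents Lemma \ref{connecting_hom2} as a reformulation of Lemma \ref{connecting_hom}, and identifying $\widetilde{f}$ with $\phi_S$ (taking the auxiliary function on coset representatives to be $\widetilde{f}\circ s$) and pushing down the coboundary is exactly that. One small point: the ``obstacle'' you flag at the end is not actually there, since with trivial coefficients $H^1(N,R)^Q=\mathrm{Hom}(N,R)^Q$ consists of homomorphisms invariant under conjugation on the nose (there are no nonzero $1$-coboundaries), so the discrepancy $f(n)-f(g n g^{-1})$ vanishes identically and the cocycle descends to $Q\times Q$ exactly, independent of the chosen lifts.
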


\end{document}